\documentclass[11pt]{amsart}

\usepackage[T1]{fontenc}
\usepackage{lmodern}
\usepackage{amsmath,amssymb,amsthm,mathtools}
\usepackage[margin=1in]{geometry}
\usepackage[colorlinks=true,linkcolor=blue,citecolor=blue,urlcolor=blue]{hyperref}

\usepackage[all]{xy}\CompileMatrices\SelectTips{cm}{12}

\renewcommand{\AA}{\mathbb A}
\newcommand{\PP}{\mathbb P}
\newcommand{\VV}{\mathbb V}
\newcommand{\cO}{\mathcal O}

\newcommand{\End}{\operatorname{End}}

\newcommand{\Hom}{\operatorname{Hom}}
\newcommand{\GL}{\operatorname{GL}}
\newcommand{\id}{\operatorname{id}}
\newcommand{\Sym}{\operatorname{Sym}}
\newcommand{\Sp}{\operatorname{Sp}}
\newcommand{\rk}{\operatorname{rk}}
\newcommand{\Proj}{\operatorname{Proj}}
\newcommand{\Spec}{\operatorname{Spec}}
\newcommand{\sslash}{\mathbin{/\mkern-6mu/}}

\newtheorem{Thm}{\sc Theorem}[section]
\newtheorem{theorem}[Thm]{\sc Theorem}
\newtheorem{corollary}[Thm]{\sc Corollary}
\newtheorem{proposition}[Thm]{\sc Proposition}
\newtheorem{lemma}[Thm]{\sc Lemma}

\theoremstyle{definition}
\newtheorem{definition}[Thm]{\sc Definition}
\newtheorem{remark}[Thm]{\sc Remark}
\newtheorem{example}[Thm]{\sc Example}

\begin{document}

\markboth {\rm }{}

\title{Ample vector bundles on non-proper schemes}
\author{Adrian Langer}

\email{alan@mimuw.edu.pl}
\address{University of Warsaw, Institute of Mathematics,
	ul.\ Banacha 2, 02-097 Warszawa, Poland}
	
\date{\today}
	
\maketitle

\begin{abstract}
We solve two open problems on ample vector bundles posed by Hartshorne in 1966. We prove that tensor products of ample vector bundles on schemes of finite type over an algebraically closed field are ample in arbitrary characteristic, extending Hartshorne’s characteristic-zero result and Barton’s projective positive-characteristic theorem.  More generally, let \(f\colon X\to S\) be a morphism of schemes. Then we prove that tensor products of
\(f\)-ample vector bundles are \(f\)-ample. Moreover, if \(E\) is an
\(f\)-ample vector bundle of rank \(r\ge 1\) and \(W\) is a finite locally
free polynomial \(\GL_{r,S}\)-module of positive rank with \(W_0=0\),
then \(E(W)\) is \(f\)-ample. In particular, \(\Gamma^nE\) is
\(f\)-ample for every \(n>0\). Finally, adapting a construction of
Ejiri--Fujino--Iwai, we show that in every characteristic there exists a smooth
quasi-projective surface that carries a non-ample extension of an ample line
bundle by itself.
\end{abstract}

\section*{Introduction}

Let \(X\) be a scheme of finite type over an algebraically closed field \(k\). A vector bundle \(E\) on \(X\) is \emph{ample} if the tautological line bundle \(\cO_{\PP_X(E)}(1)\) is ample on \(\PP_X(E)\).

Hartshorne proved that, in characteristic zero, the tensor product of
two ample vector bundles is ample
\cite[Corollary 5.3]{Hartshorne-Ample}, and asked whether the same
statement holds in positive characteristic
\cite[Introduction]{Hartshorne-Ample}. Barton subsequently answered
this question affirmatively when \(X\) is proper \cite{Barton}, leaving
open the case of non-proper schemes. Our main result settles this
remaining case and hence answers Hartshorne's question in full.

\begin{Thm}\label{main}
	Let \(X\) be a scheme of finite type over an algebraically closed field
	\(k\) of arbitrary characteristic. If \(E_1\) and \(E_2\) are ample
	vector bundles on \(X\), then \(E_1\otimes E_2\) is ample.
\end{Thm}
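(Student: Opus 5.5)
We reduce to a statement about \(f\)-ampleness that can be checked after compactification, and then invoke Barton's projective theorem. Since \(X\) is of finite type over \(k\) and ampleness of \(E_i\) forces \(X\) to be quasi-projective, there is a proper compactification. It does not suffice to extend \(E_1,E_2\) to ample bundles on a compactification; such extensions need not exist. We therefore work with the characterization of ampleness via cohomological or generation conditions: \(E\) is ample iff for every coherent sheaf \(F\) on \(X\) the sheaves \(F\otimes S^nE\) are globally generated for \(n\gg 0\). Equivalently, for some (any) ample line bundle \(L\) and every \(a>0\), \(S^{n}E\otimes L^{-a}\) is globally generated for \(n\gg0\).

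\textbf{Step 1.} We show that \(E_1\otimes E_2\) is ample iff \(\cO_{\PP(E_1)}(1)\boxtimes\cO_{\PP(E_2)}(1)\) is ample on the fibre product \(P=\PP_X(E_1)\times_X\PP_X(E_2)\). The tensor product corresponds to the Segre map \(P\to\PP_X(E_1\otimes E_2)\), which is a closed embedding; restricting \(\cO(1)\) gives the line bundle \(M=\cO(1,1)\). The real content is the converse direction: the image of \(P\) is not all of \(\PP(E_1\otimes E_2)\), so ampleness of \(M\) on \(P\) does not formally give ampleness on \(\PP(E_1\otimes E_2)\). Hartshorne's characteristic-zero argument passes through the fact that \(S^n(E_1\otimes E_2)\) is a direct summand of \(\bigoplus S^{\lambda}E_1\otimes S^\lambda E_2\), where the Schur functors are pieces of \(S^nE_1\otimes S^nE_2\). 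In characteristic \(p\) the splitting fails.

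\textbf{Step 2.} To replace this, we use Frobenius and quotient properties. Ampleness of \(E\) is equivalent to ampleness of \(F^{*}E\) (iterated Frobenius pullback), and quotients of ample bundles are ample. Barton's proof in the proper case shows \(E_1\otimes E_2\) is ample by testing on curves (Seshadri/Barton–Kleiman style), which needs properness. Instead, we aim to prove directly that for an ample line bundle \(L\), \(S^n(E_1\otimes E_2)\otimes L^{-1}\) is globally generated for \(n\gg0\). We do this by finding a surjection from \(\bigotimes_{j} (S^{m}E_1\otimes S^mE_2)\otimes L^{-1}\)-type sheaves, which are globally generated by the ampleness of each \(E_i\), onto \(S^n(E_1\otimes E_2)\otimes L^{-1}\). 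The natural candidate is the multiplication \((E_1\otimes E_2)^{\otimes n}\to S^n(E_1\otimes E_2)\), composed with the inclusion of divided-power-type pieces. Concretely, \(\Gamma^n(E_1)\otimes\Gamma^n(E_2)\to\Gamma^n(E_1\otimes E_2)\) exists in all characteristics, and ampleness of \(\Gamma^nE\) (as mentioned in the abstract) suggests working with \(\Gamma\) and the dual symmetric algebra. Over an open where things trivialize, Frobenius twists \(E^{(p^e)}\subset S^{p^e}E\) give \(E_1^{(q)}\otimes E_2^{(q)}=(E_1\otimes E_2)^{(q)}\). So ampleness of \(E_1\otimes E_2\) follows from ampleness of \(F^{e*}E_1\otimes F^{e*}E_2\) for some \(e\), which reduces the problem to the case where the \(E_i\) are "very positive".

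\textbf{Step 3 (main obstacle).} The step I expect to be hardest is proving that \(M=\cO(1,1)\) ample on \(P\) implies \(E_1\otimes E_2\) ample without properness. My first attempt will be a relative argument. I will compactify \(X\subset\bar X\) with \(\bar X\) projective, extend \(E_i\) to coherent sheaves, and use the relative \(f\)-ample formalism over \(S=X\) together with an ample \(L\) on \(X\). Ampleness of \(E\) is equivalent to \(f\)-ampleness of \(\cO_{\PP(E)}(1)\) together with the condition that \(S^nE\otimes L^{-1}\) is globally generated for large \(n\). I will then prove that \(f\)-ampleness is preserved under tensor products by a local argument on \(S\) (affine base, where \(f\)-ample equals ample and one can use Serre vanishing on the projective fibre bundles). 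This reduces the global generation statement to Barton-type positivity along fibres, which are proper.
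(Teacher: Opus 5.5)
Your proposal locates the difficulty correctly, but it never gives an argument for it. Each of the three steps is either a reformulation, vacuous, or circular.

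\emph{Step~1.} Ampleness of $\cO(1,1)$ on $P=\PP_X(E_1)\times_X\PP_X(E_2)$ is actually cheap. Indeed, $P$ is the projective bundle of $\pi_1^*E_2\otimes\cO_{\PP_X(E_1)}(1)$ over $\PP_X(E_1)$, and $\pi_1^*\Sym^nE_2\otimes\cO_{\PP_X(E_1)}(n)\otimes\mathcal F$ is globally generated for $n\gg0$. But the Segre map is a closed embedding \emph{into} $\PP_X(E_1\otimes E_2)$, corresponding to the surjection $\Sym^n(E_1\otimes E_2)\twoheadrightarrow\Sym^nE_1\otimes\Sym^nE_2$. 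Global generation of a quotient says nothing about $\Sym^n(E_1\otimes E_2)\otimes L^{-1}$.

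\emph{Step~2.} The surjection you want, from tensor products of $\Sym^mE_1\otimes\Sym^mE_2$ onto $\Sym^n(E_1\otimes E_2)$, is what the characteristic-zero Cauchy decomposition supplies. In characteristic $p$ you only get a filtration. On a non-proper $X$ a filtration does not help, because extensions of ample bundles need not be ample there (Theorem~\ref{extension-of-ample}). The Frobenius reduction is empty: $F^{e*}E_1\otimes F^{e*}E_2=F^{e*}(E_1\otimes E_2)$ and ampleness is invariant under Frobenius pullback, so ``very positive'' never acquires content. Invoking ampleness of $\Gamma^nE$ is circular, since the paper derives it from the tensor product theorem via ampleness of $E^{\otimes n}$.

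\emph{Step~3.} For $f=\id_X$, every vector bundle is $f$-ample: over an affine open, $\cO_{\PP(E)}(1)$ is relatively ample, hence ample. So the relative formalism carries no information. For an affine base $S$, $f$-ampleness is ampleness, and the ``local'' statement is the theorem itself. Everything therefore sits in the global generation of $\Sym^n(E_1\otimes E_2)\otimes L^{-1}$ on the non-proper $X$. This cannot be read off fibres: the fibres of $\PP_X(E_1\otimes E_2)\to X$ are projective spaces, on which $\cO(1)$ is always ample and nothing is at stake. The compactification $\bar X$ and the coherent extensions of the $E_i$ are never used.

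The missing idea is a map going \emph{onto} the relevant projective bundle from a space where positivity is available, along which sections descend. The paper replaces $E_1\otimes E_2$ by $\bigwedge^2E$ with $E=E_1\oplus E_2$; here $E_1\otimes E_2$ is a direct summand of $\bigwedge^2E$. It then uses the De~Concini--Procesi theorems, $\bigl(\Sym(E\otimes U)\bigr)^{\Sp(U)}\simeq\Sym(\bigwedge^2E)$ with $\bigwedge^2E$ in degree $2$. This realizes $Q=\PP_X(\bigwedge^2E)$ as Seshadri's relative GIT quotient $q\colon Y^{\mathrm{ss}}\twoheadrightarrow Q$ of $Y=\PP_X(E\otimes U)$, with $q^*\cO_Q(1)\simeq\cO_Y(2)|_{Y^{\mathrm{ss}}}$.

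Next, take a $G$-equivariant embedding of $Y$ by a power $B$ of $\cO_Y(2(N-1))\otimes f^*H^{-a}$. Fibrewise semistability together with power reductivity yields $G$-invariant sections of powers of $B$ that do not vanish at any prescribed semistable point. These descend to sections of powers of $D=(M^N\otimes A^{-1})^t$, where $A=M\otimes g^*H^a$ is ample. Hence $D^e$ is globally generated, and $M^{Nte}=A^{te}\otimes D^e$ is ample.

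The twist by $f^*H^{-a}$, invisible on fibres, is what converts fibrewise information into absolute ampleness over the non-proper base. That is exactly the step your Step~3 hopes to extract from the fibres alone.
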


 Hartshorne's proof  (see \cite[Corollary 5.3]{Hartshorne-Ample}) used representation theory of $\GL (r)$. Unfortunately, in positive characteristic, this theory is notoriously complex and remains insufficiently understood, rendering generalization of his method unfeasible. On the other hand, Barton's approach \cite{Barton} used a numerical characterization of ample vector bundles on smooth projective curves, which is inapplicable in the non-proper setting. Our proof takes a completely different approach, based on Mumford’s geometric invariant theory and Seshadri’s relative version of it.
 Our method also  gives a new proof of the results of Hartshorne \cite{Hartshorne-Ample} and Barton \cite{Barton}.

The idea of the proof is as follows. It is easy to see that the above theorem follows from the fact that if 
$E$ is an ample vector bundle of rank $r\ge 2$ then $\bigwedge^2 E$ is ample.
The results of De Concini and  Procesi \cite{DeConcini-Procesi} imply that one can find a reductive group $G$
acting on the fibers of a direct sum $E^{\oplus s}$ of $s$ copies of $E$ such that
$Q:=\PP_X\bigl(\textstyle\bigwedge^2E\bigr)$ is Seshadri's relative GIT quotient of $Y:=\PP _X \bigl( E^{\oplus s}\bigr)$ over $X$ and $\cO_Q(1)$ pulls back to the restriction of $\cO_Y(2)$ to ${Y^{\mathrm{ss}}(\cO_Y(1))}$.
Since $\cO_Y(1)$ is ample we can find a $G$-equivariant locally closed immersion $j:Y\hookrightarrow \PP(V)$ into a finite dimensional projective space such that $j^*\cO_{\PP(V)}(1)$ is isomorphic to a sufficiently high tensor power of $\cO_Y(1)$. 
Then semistable points of $Y$ are mapped into semistable points of $\PP(V)$ so we have an induced morphism
$Q\to \PP(V)\sslash G$, which is finite after restricting to fibers of the projection
$Q \to X$. Since $\cO_Q(1)$ is relatively ample over $X$, this implies that $\cO_Q(1)$, and hence $\bigwedge^2 E$, are ample. The actual technical details are much more complicated as for example  we need to replace the polarization $\cO_Y(1)$ with a sufficiently large power of $\cO_Y(1)$  twisted by the pullback of the inverse of an ample line bundle on $X$.

\medskip

The same ideas, incorporating additionally van der Kallen’s power reductivity \cite{vdK} and noetherian approximation,  allow us to prove the following much stronger theorem that holds in mixed characteristic without any assumptions:

\begin{Thm}\label{main2}
	Let \(f:X\to S\) be a morphism of schemes. If \(E_1\) and \(E_2\) are $f$-ample
	vector bundles on \(X\), then \(E_1\otimes E_2\) is $f$-ample.
\end{Thm}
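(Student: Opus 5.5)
We first reduce Theorem \ref{main2} to the case of a noetherian (indeed finite type over $\mathbb Z$) base. Relative ampleness of a vector bundle $E$ means $\cO_{\PP_X(E)}(1)$ is $f$-ample. This condition is local on $S$, so we may take $S$ affine; it is also compatible with base change and descends along affine limits. Since $f$-ampleness forces $f$ to be quasi-compact (and we may reduce to $f$ of finite presentation on the relevant quasi-compact pieces), standard noetherian approximation gives the following. Writing $S=\lim S_\lambda$ with $S_\lambda$ of finite type over $\mathbb Z$, the data $(X,E_1,E_2)$ descend to some $(X_\lambda,E_{1,\lambda},E_{2,\lambda})$ with both $E_{i,\lambda}$ relatively ample. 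This uses the limit results for relatively ample line bundles (EGA IV, §8–9) applied to $\cO_{\PP(E_i)}(1)$. Ampleness of $E_{1,\lambda}\otimes E_{2,\lambda}$ then pulls back to $X$. We may also assume $X$ is quasi-projective over a noetherian affine $S$ and fix an $f$-ample line bundle $L$ on $X$.

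Next we reduce to exterior squares, as indicated in the introduction. We use the equivalence that $E$ is $f$-ample iff for every coherent $F$, $F\otimes \Sym^m E$ is relatively globally generated for $m\gg0$. Under this equivalence, $E_1\oplus E_2$ is $f$-ample. We then use
\[
\textstyle\bigwedge^2(E_1\oplus E_2)=\bigwedge^2E_1\oplus (E_1\otimes E_2)\oplus\bigwedge^2E_2 .
\]
Since quotients (in particular direct summands) of $f$-ample bundles are $f$-ample, it suffices to show: if $E$ is $f$-ample of rank $r\ge2$, then $\bigwedge^2E$ is $f$-ample.

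For the main step, we realize $Q=\PP_X(\bigwedge^2E)$ as a relative GIT quotient. By the first fundamental theorem in characteristic-free form (De Concini–Procesi), take $G=\Sp$ or $\GL$ acting on $E^{\oplus s}$ (concretely, $E\otimes V^\vee$ with $V$ a trivial rank $s$ symplectic module). The invariants of $\Sym^\bullet(E^{\oplus s})$, in even degrees, are generated by $\Sym^\bullet(\bigwedge^2E)$ over $\mathbb Z$, and this holds universally over any base because of the good filtrations / standard monomial bases. Seshadri's relative GIT then gives $Y^{ss}\to Q$, a good quotient, with $\cO_Q(1)$ pulling back to $\cO_Y(2)|_{Y^{ss}}$, where $Y=\PP_X(E^{\oplus s})$. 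Now $E^{\oplus s}$ is $f$-ample, so $M=\cO_Y(N)\otimes \pi^*L^{-1}$ is $f$-ample for $N\gg0$. Take a $G$-equivariant immersion $j:Y\hookrightarrow \PP_S(W)$ with $j^*\cO(1)=M^{k}$, built from the $G$-invariant finite $\cO_S$-submodule of global sections; $G$-finiteness of submodules is available over a noetherian base. The invariant sections of powers of $\cO_{\PP(W)}(1)$ restrict to $G$-invariant sections of $M^{k}$. Using power reductivity (van der Kallen) over the noetherian base, invariants of $\Sym(W^\vee)$ surject up to powers onto invariants on closed $G$-stable subschemes, so enough such sections exist to separate $Y^{ss}$ orbits. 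This yields a morphism $Q\to \PP_S(W)\sslash G$ that is quasi-finite, hence finite on fibres over $X$, and through which the restriction of $\cO_Q(Nk)\otimes L^{-k}$ is pulled back from an ample bundle. A bundle that is relatively ample over $X$ and pulled back via a morphism that is quasi-finite on the whole (affine over $S$-ample target) becomes $f$-ample. Hence $\cO_Q(1)$ is $f$-ample, which is what we want.

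The main obstacle is this last step. We must ensure that $Y^{ss}(M)$ maps into $\PP(W)^{ss}$, that the induced map on quotients is quasi-finite, and that twisting by $L^{-1}$ does not change the semistable locus relative to the one computing $Q$. We plan to handle this by comparing invariant rings fibrewise over $X$ and invoking power reductivity to lift invariants from fibres.
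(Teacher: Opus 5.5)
Your overall architecture is the paper's: noetherian approximation, $\bigwedge^2(E_1\oplus E_2)$, the symplectic De Concini--Procesi quotient $Y^{\mathrm{ss}}_X\to Q=\PP_X(\bigwedge^2E)$, the twisted polarization $\cO_Y(N)\otimes\pi^*L^{-1}$, an equivariant embedding $j\colon Y\to\PP_S(W)$, and power reductivity. However, the step that carries all of the non-proper content is only announced, and the mechanism you state for it does not work.

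Because $X$ is not proper over $S=\Spec R$, $j(Y)$ is only locally closed in $\PP_S(W)$. So power-surjectivity of invariants onto closed $G$-stable subschemes says nothing about $Y$. Passing to the closure $\overline{j(Y)}$ is circular: relative semistability over $X$ only gives invariant sections over preimages of affine opens of $X$, not on $\overline{j(Y)}$. (Separating orbits is neither needed nor true for semistable points.) What must be shown is this: for every geometric point $\bar y$ of $Y^{\mathrm{ss}}_X$ there is a $G$-invariant form of positive degree on $\PP_S(W)$, \emph{defined over $R$}, that does not vanish at $j(\bar y)$. Without it there is no morphism $Q\to\PP_S(W)\sslash G$ at all.

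The paper proves this in Lemma~\ref{semistability}, in four steps. First, the fibre $Y_{\bar x}$ over $\bar x=\pi(\bar y)$ is proper, so $j_{\bar x}$ is a closed immersion into $\PP(W_\Omega)$. Second, relative semistability over $X$ is fibrewise semistability, and on $Y_{\bar x}$ the twist by $\pi^*L^{-1}$ is a trivial one-dimensional representation; hence $j_{\bar x}(\bar y)$ is semistable in $\PP(W_\Omega)$. Third, the invariant is descended from $\Omega$ to $\kappa(\mathfrak p)$ by flat base change, where $\mathfrak p$ is the image in $\Spec R$, not in $X$. Fourth, a power is lifted to $R_{\mathfrak p}$ by power reductivity applied to the surjection of homogeneous coordinate rings over $R_{\mathfrak p}$ and $\kappa(\mathfrak p)$, and denominators are cleared. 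Your closing sentence points this way, but this is the heart of the proof and it is missing.

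Two further steps need repair. First, you cannot ``reduce to $f$ of finite presentation''. Relative ampleness only forces $f$ to be quasi-compact and separated; for instance, $\cO_X$ is ample on every affine $X$, however large over a field. So $X$ cannot be descended along $S=\lim S_\lambda$. Instead, over an affine open $\Spec R\subseteq S$ relative ampleness is ordinary ampleness. You then approximate $X_R$ itself (or the morphism $X_R\to\Spec R$) by schemes of finite type over $\mathbb Z$ with affine transition maps. Ampleness of $\cO_{\PP(E_i)}(1)$ descends to a finite stage and pulls back along affine maps, as in Lemma~\ref{approximation-of-ample-vector-bundles}.

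Second, global quasi-finiteness of $\phi\colon Q\to\PP_S(W)\sslash G$ is unjustified, since nothing controls horizontal curves. What is true is that $\phi^*\cO(1)$ restricts on each fibre $Q_x$ of $g\colon Q\to X$ to a positive power of $\cO_{Q_x}(1)$, so $\phi|_{Q_x}$ is finite. Hence $(\phi,g)\colon Q\to(\PP_S(W)\sslash G)\times_S X$ is proper and quasi-finite, thus finite. The ample bundle $\cO(1)\boxtimes L^{m}$, for suitable $m$, then pulls back to a power of $\cO_Q(1)$. The paper sidesteps this entirely. It fixes $A=\cO_Q(1)\otimes g^*H^a$ ample on $Q$ and chooses the twist on $Y$ so that a power of $\cO_Q(1)$ equals $A^{te}\otimes D^e$, with $D^e$ globally generated by descended invariant sections; ample tensor globally generated is ample.

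Two minor points. The descended bundle is $\cO_Q(Nk/2)\otimes g^*L^{-k}$, because $\cO_Q(1)$ pulls back to $\cO_Y(2)$. Also, the symplectic module must have rank at least $r-1$, so that there are no Pfaffian relations and the quotient is all of $Q$.
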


The absence of hypotheses on \(f\) may appear surprising. However, under our convention vector bundles have positive rank on every connected component, so existence of an \(f\)-ample vector bundle already forces
\(f\) to be quasi-compact and separated. Thus these properties of \(f\) are automatic in the statement.

If $S$ is defined over a field, the above theorem can be, with some effort,  deduced from Theorem \ref{main} by noetherian approximation. However, in general Theorem \ref{main2} is not obtained merely by applying Theorem \ref{main} to geometric fibers as fiberwise ampleness does not imply relative ampleness for a nonproper morphism
(see, e.g.,  a very simple example pointed out by Benoist in \cite{Benoist-relative-ampleness}).

\medskip

Since quotients of ample vector bundles are ample, Theorem \ref{main2} for $S=\Spec \mathbb Z$
immediately implies that, for every ample vector bundle
\(E\), the bundles \(\Sym^n E\) are ample for \(n>0\), and
\(\bigwedge^n E\) are ample for \(1\leq n\leq\rk E\). Indeed, both are
quotients of \(E^{\otimes n}\). The corresponding assertion for divided powers is subtler in positive or mixed 
characteristic as the natural map involving \(\Gamma^nE\) is an
inclusion $\Gamma^nE\to E^{\otimes n}$,
and ampleness is not in general inherited by subbundles. 
Nevertheless, with a substantial amount of additional work we are able to establish the following theorem (see Corollary \ref{representations-of-ample}).

\begin{theorem}\label{main3}
	Let \(f:X\to S\) be a morphism of schemes. 
Let \(E\) be an \(f\)-ample vector bundle of rank \(r\ge 1\), and let \(W\) be a finite locally free polynomial
\(\GL_{r,S}\)-module of positive rank whose degree zero homogeneous
component \(W_0\) vanishes. Then \(E(W)\) is \(f\)-ample.
In particular, \(\Gamma^nE\) is $f$-ample for every \(n>0\).
\end{theorem}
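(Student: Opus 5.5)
Here is the plan. I reduce to the case where \(W\) is homogeneous of degree \(n\ge1\), realize \(E(W)\) geometrically via a GIT quotient in the spirit of the proof of Theorem~\ref{main2}, and invoke the same finiteness argument.

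\emph{Reduction to the homogeneous case.} A polynomial \(\GL_{r,S}\)-module decomposes as \(W=\bigoplus_{n\ge1}W_n\). Each nonzero \(W_n\) is again finite locally free, since it is a direct summand. A direct sum of \(f\)-ample bundles is \(f\)-ample, so it suffices to treat each \(W_n\). Note that on connected components of \(S\) where \(W_n\) has rank zero the summand simply disappears. By noetherian approximation, exactly as in the proof of Theorem~\ref{main2}, I may further assume that \(S\) is noetherian (even of finite type over \(\mathbb Z\)), and that \(X\), \(E\) and \(W\) descend.

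\emph{Geometric realization.} For homogeneous \(W\) of degree \(n\), polynomial representations of degree \(n\) are modules over the Schur algebra \(S(r,n)\). In particular, \(W\) is a subquotient of a finite direct sum of copies of \(V^{\otimes n}\), where \(V\) is the standard module. Subbundles are the problem, so I will not use this directly.

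Instead I follow the strategy of the main proof. Take \(Y=\PP_X(E^{\oplus s})\) with a reductive group \(G\) acting on the multiplicity space \(k^s\) (or \(\mathbb Z^s\)). Then \(E(W)\) appears as a graded piece of the invariant ring
\[\bigl(\Sym^\bullet (E^{\oplus s})\bigr)^G ,\]
or of a twisted version of it. Concretely, by Schur--Weyl/Howe-type duality in its characteristic-free form (De Concini--Procesi, or Donkin's results on good filtrations), \(W^\vee\)-isotypic pieces appear in \(\Sym^n(E\otimes U)\) for \(U=\cO^s\). One then takes invariants under \(\GL(U)\) or a suitable subgroup \(H\), which cuts out a module built from \(E(W)\).

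The key point is to choose \(G\) and a character so that the semi-invariants of weight \(\chi^m\) have degree-\(n\) part with \(E(W)\) as a quotient-like piece. One can then produce a finite morphism from \(\PP_X(E(W))\), or from a scheme dominating it surjectively, to the relative GIT quotient \(Y\sslash_\chi G\). Here power reductivity \cite{vdK} guarantees finite generation and the surjectivity of invariants in the arithmetic setting.

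\emph{Conclusion and main obstacle.} Once such a finite (fiberwise over \(X\)) morphism \(\PP_X(E(W))\to Q\) is in place, with \(\cO(1)\) pulling back to a power of the polarization, the argument of Theorem~\ref{main2} applies. That argument embeds \(Y\) equivariantly, uses the twisted polarization \(\cO_Y(m)\otimes f^*A^{-1}\), and passes to \(\PP(V)\sslash G\); it then yields \(f\)-ampleness.

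The main obstacle is the representation-theoretic step in positive and mixed characteristic. There one must exhibit \(E(W)\), or a bundle surjecting onto it, such as \(E(W')\) with \(W'\twoheadrightarrow W\), inside the invariant theory. I would try first to reduce to the case where \(W\) is a direct sum of \(\Gamma^\lambda\)'s: every polynomial module of degree \(n\) is a quotient of a sum of \(\Gamma^{\lambda}V=\Gamma^{\lambda_1}V\otimes\cdots\) (these are the projective generators of the Schur algebra). Quotients preserve ampleness, and tensor products are covered by Theorem~\ref{main2}. This reduces everything to proving that \(\Gamma^nE\) is \(f\)-ample.

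For \(\Gamma^nE\), I would realize \(\PP_X(\Gamma^n E)\) via the invariants
\[\Gamma^n E=(E^{\otimes n})^{S_n}\]
and the finite group \(S_n\). Concretely, I would use the finite map \(\PP_X(E)^{\times_X n}/S_n\to \PP_X(\Gamma^nE)\), whose pullback of \(\cO(1)\) is the descended \(\cO(1,\dots,1)\). I would check that this map hits the image surjectively. It is, however, not surjective onto all of \(\PP(\Gamma^n E)\), so this is exactly where I expect the real work to lie.
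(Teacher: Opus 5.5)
Your reduction matches the paper's: after noetherian approximation (Lemma~\ref{approximation-of-ample-vector-bundles}), Proposition~\ref{prop:divided-power-generators} writes $E(W)$ as a quotient of a sum of bundles $\Gamma^\lambda E=\bigotimes_i\Gamma^{\lambda_i}E$. Corollary~\ref{ample-tensor-product} and Proposition~\ref{direct-sums} then reduce everything to the ampleness of $\Gamma^nE$. But you do not prove that step, and it is the entire difficulty.

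Your GIT paragraph specifies no group, character or invariant-theoretic identity playing the role De Concini--Procesi plays for $\bigwedge^2$. In positive characteristic the available Cauchy/Howe-type decompositions are filtrations rather than direct sums, which reintroduces the subbundle problem. So that paragraph is not an argument.

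The $\mathfrak S_n$-construction cannot be completed either. The map $\PP_X(E)^{\times_X n}\to\PP_X(\Gamma^nE)$ sends $(\lambda_1,\dots,\lambda_n)$ to the product $\lambda_1\cdots\lambda_n\in\Sym^n(E_x^*)\simeq(\Gamma^nE_x)^*$. Its image is therefore the closed locus of completely decomposable forms, which is a proper closed subset as soon as $r\ge3$ and $n\ge2$. Ampleness of $\cO(1)$ on a proper closed subscheme gives no information about ampleness on $\PP_X(\Gamma^nE)$, least of all over a nonproper base.

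The missing idea is the power retraction of Lemma~\ref{lem:power-retraction}.
\begin{enumerate}
\item Let $U:=\Gamma^nV\subset V^{\otimes n}$. This inclusion need not admit a $\GL(V)$-equivariant retraction in positive characteristic, but it is an $R$-direct summand, so $\Sym^qU\subset\Sym^q(V^{\otimes n})$ is one too.
\item Power reductivity of $\GL(V)$, applied to this inclusion through the functional $\ell\colon M_0\to R$ in the proof of that lemma, gives (after multiplying out) a $\GL(V)$-equivariant linear map $\Gamma^d\bigl(\Sym^q(V^{\otimes n})\bigr)\to\Sym^{dq}U$ sending $\gamma_d(u^q)$ to $u^{dq}$.
\item Choose an ample $H$ and $q$ such that $B:=\Sym^q(E^{\otimes n})\otimes H^{-1}$ is globally generated; this is possible because $E^{\otimes n}$ is ample. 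The map globalizes to $\Gamma^dB\to\Sym^{dq}(\Gamma^nE)\otimes H^{-d}$, and its source is globally generated.
\item On $Y:=\PP_X(\Gamma^nE)$, compose this map with the tautological quotient. The result is surjective, as one checks on $\gamma_d(u^q\otimes\eta)$ at geometric points. Hence $\cO_Y(dq)\otimes\pi^*H^{-d}$ is globally generated.
\item The induced morphism $Y\to\PP^m_R\times_RX$ is proper and quasi-finite, hence finite. It pulls back the ample bundle $\cO(1)\boxtimes H^d$ to $\cO_Y(dq)$, so $\cO_Y(1)$ is ample.
\end{enumerate}
This is Theorem~\ref{thm:divided-powers-ample}. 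Without it, or some genuine substitute, your proposal does not prove the statement.
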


In principle, we obtain Theorem \ref{main3} as a corollary of Theorem \ref{main2} but the proof needs again 
 van der Kallen’s power reductivity, and existence of a certain $G$-equivariant map from divided powers to symmetric powers of a submodule (see Lemma \ref{lem:power-retraction}).

\medskip

In the same paper  Hartshorne proved that if $X$ is proper, then every extension of ample vector bundles is ample (see \cite[Corollary 3.4]{Hartshorne-Ample}) and he asked if this remained true for non-proper schemes 
(see \cite[Remark following Corollary 3.4]{Hartshorne-Ample}). 
Here we answer this question in the negative (in every characteristic, including characteristic zero).

\begin{Thm}\label{extension-of-ample}
	Let $k$ be an algebraically closed field. Then	there exists a smooth quasi-projective surface $S$ over $k$, an ample line bundle $L$ on $S$ and an extension
	\[
	0\longrightarrow L\longrightarrow E
	\longrightarrow L
	\longrightarrow 0
	\]
	such that $E$ is not ample.
\end{Thm}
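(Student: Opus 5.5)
The plan is to build $S$ so that the extension class of $E$ is the pullback of a non‑trivial class of unipotent type, and then show that $\cO_{\PP(E)}(1)$ fails to be ample because of the non‑properness of $S$.

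\emph{Setup.} Extensions $0\to L\to E\to L\to 0$ are classified by $\operatorname{Ext}^1(L,L)=H^1(S,\cO_S)$. Writing $E=L\otimes F$ with $F$ an extension of $\cO_S$ by $\cO_S$, we get
\[
\PP(E)=\PP(F),\qquad \cO_{\PP(E)}(1)=\cO_{\PP(F)}(1)\otimes\pi^*L .
\]
Two constraints follow. First, $S$ cannot be affine, since then $H^1(S,\cO_S)=0$ and every such extension splits. Second, $E$ restricts to an ample bundle on every complete curve in $S$, by Hartshorne's result on extensions over proper schemes \cite[Corollary 3.4]{Hartshorne-Ample}. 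So the failure of ampleness must come from non‑proper directions.

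\emph{Construction.} Following Ejiri--Fujino--Iwai:
\begin{itemize}
\item Take an elliptic curve $C$ and Atiyah's non‑split unipotent bundle $F_2$ on $C$. This is a non‑trivial extension of $\cO_C$ by $\cO_C$, and $\cO_{\PP(F_2)}(1)$ is nef but not ample, with the section $C_0$ satisfying $C_0^2=0$.
\item Take a projective surface $Z$ with a morphism $g\colon Z\to C$.
\item Remove a suitable effective divisor $D$ to get $S=Z\setminus D$. Choose $D$ so that $S$ carries an ample line bundle $L$ (the restriction of an ample divisor on $Z$ supported compatibly with $D$) while $g^*\colon H^1(C,\cO_C)\to H^1(S,\cO_S)$ stays injective.
\item Set $F=g^*F_2$ and $E=L\otimes F$.
\end{itemize}
Then $\PP(E)=S\times_C\PP_C(F_2)$ and $\cO_{\PP(E)}(1)=p^*\cO_{\PP(F_2)}(1)\otimes\pi^*L$. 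The freedom in choosing $D$, and a "degree" of $L$ that can be made small relative to the twisting, is what will be used to defeat ampleness. Checking smoothness and quasi‑projectivity of $S$, and ampleness of $L$, is routine once $Z$ and $D$ are fixed.

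\emph{Non‑ampleness (the main step).} I would argue by contradiction with the following criterion: if $\cO_{\PP(E)}(1)$ is ample on the quasi‑projective scheme $\PP(E)$, then for $m\gg0$ the sections of $\Sym^m E$ over $S$ separate points of $\PP(E)$, or at least separate points in some fibre direction. To compute $H^0(S,\Sym^m E)$, use the filtration of $\Sym^m F$ with all graded pieces $\cO_S$, together with the connecting maps. These connecting maps are cup product with the pulled‑back Atiyah class. Since $H^0(S,\cO_S)$ is controlled, the goal is to show that every section of $\Sym^mE=L^m\otimes\Sym^mF$ lies in the image of $L^m\otimes(\text{bottom step of the filtration})$. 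Such sections all vanish on, or are constant along, a fixed locus such as $\pi^{-1}$ of a non‑proper curve approaching $D$. This would contradict the separation of points.

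\emph{Expected obstacle.} The hard part is exactly this vanishing of the connecting maps' kernels: one must show the cup product $H^0(S,L^m)\to H^1(S,L^m)$ with the extension class is injective for all $m$. This is where the specific geometry of Ejiri--Fujino--Iwai is needed, and I would first try to reduce it to the corresponding statement on a compactification along $D$ using local cohomology.
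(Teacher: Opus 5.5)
\textbf{There is a genuine gap: you never produce an example, and the step you call the main one cannot be carried out.}

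\emph{No example, and the obvious ones are ample.} You never specify $Z$, $D$ or $L$, and the obvious instances of your recipe give \emph{ample} bundles. Suppose $g|_S\colon S\to C$ is affine and $L=g^*M$. Examples are $S=C\times\AA^1$, or the total space of a line bundle on $C$; there every line bundle comes from $C$, and ampleness of $L$ forces $M$ ample. Then $E=g^*(M\otimes F_2)$ is an affine pullback of a bundle that is ample on the proper curve $C$, so $E$ is ample. Likewise, in characteristic $p$ with $C$ supersingular, $\mathrm{Frob}_C^*F_2$ splits. Hence $\mathrm{Frob}_S^*E\simeq L^p\oplus L^p$, and $E$ is ample, because the relative Frobenius of $\PP(E)$ over $S$ is finite and pulls $\cO(1)$ back to $\cO(p)$.

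\emph{The key step is false for every choice.} Let $e\in H^1(S,\cO_S)$ be the class of $F$. As $L$ is ample, there is $t\in H^0(S,L^a)$ with $S_t$ affine, so $e|_{S_t}=0$. Since $H^1(S_t,\cO_{S_t})=\varinjlim_k H^1(S,L^{ak})$, with transition maps given by multiplication by $t$, we get $t^ke=0$ for some $k$. Therefore cup product with $e$ kills $u\,t^k\neq 0$ for every $0\neq u\in H^0(S,L^{m-ak})$. So it is non-injective on $H^0(S,L^m)$ for all $m\gg0$, whatever $S$, $L$ and $e$ are.

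Your intermediate goal fails for the same reason. Take a lift $Y\in H^0(S,L^{ak}\otimes F)$ of $t^k$ and multiply it by $u\,x^{m-1}$, where $x^{m-1}$ spans the bottom piece of $\Sym^{m-1}F$. This is a section of $\Sym^mE$ lying outside the bottom step. In characteristic $p$ there is a further problem: the connecting maps between consecutive graded pieces are cup products with $(m-i)e$, which vanish when $p\mid m-i$. So a first-order cup-product analysis cannot detect non-ampleness; you must control all higher obstructions at once.

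\emph{How the paper does it.} The paper restricts the Ejiri--Fujino--Iwai example, which lives on $X=\PP_{\PP^1}(\cO\oplus\cO(-2))$, not on an elliptic ruled surface. Here $C$ is the $(-2)$-section and $G$ is a nonzero extension of $\cO_X$ by $\cO_X(C)$. One sets $S=X\setminus C$, $L=\cO_X(C+\Phi)|_S$ and $E=(G\otimes\cO_X(C+\Phi))|_S$, with $\Phi$ a fibre. It then tests global generation of $\Sym^nE\otimes L^{-1}$ through its quotient $L^{n-1}$; the twist by $L^{-1}$ is essential.
\begin{itemize}
\item Sections over $S$ are colimits of sections of $W_l=\Sym^nG\otimes\cO_X(lC+(n-1)\Phi)$ on $X$.
\item Since $G|_C\simeq\cO_{\PP^1}(-1)^{\oplus 2}$, one gets $W_l|_C\simeq\cO_{\PP^1}(-2l-1)^{\oplus(n+1)}$. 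Hence all sections come from $W_0$, and sections of $W_0$ vanish on $C$.
\item Restriction $H^0(X,\cO_X((n-1)\Phi))\to H^0(C,\cO_C((n-1)\Phi))$ is an isomorphism.
\end{itemize}
Thus $H^0(S,\Sym^nE\otimes L^{-1})\to H^0(S,L^{n-1})$ is zero for every $n\geq 1$, and Proposition~\ref{equivalent-definitions} shows that $E$ is not ample.

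In this example $e\cup s=0$ for all $s\in H^0(S,L^j)$ with $j$ odd, and $E$ is even globally generated. So what detects the failure of ampleness is the negativity of $G$ along the removed curve, exploited on the compactification, not a cup-product computation.
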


Note that the theorem above implies that, without properness, ampleness need not be an open condition in families.
It also gives some further interesting examples (see Example \ref{example:funny}).

To prove the above theorem, we construct an explicit example by restricting the one given by Ejiri--Fujino--Iwai in \cite[Section 4]{Ejiri-Fujino-Iwai} to an open subscheme. Their construction provided the primary motivation for expecting such an example to exist.

\medskip

The paper is organized as follows. Section~1 collects some preliminary
results. In Section~2 we prove the key new result that the second
exterior power of an ample vector bundle is ample. Applying this result
to \(E_1\oplus E_2\) proves Theorem~\ref{main} and the case of
Theorem~\ref{main2} in which \(f\) is of finite
type over a noetherian ring; see Corollary~\ref{ample-tensor-product}. In Section~3 we use
Corollary~\ref{ample-tensor-product} to establish the ampleness of the
divided powers of an ample vector bundle. In Section~4 we combine the
preceding results with noetherian approximation to complete the proof of
Theorem~\ref{main2} and to prove Theorem~\ref{main3}; see
Corollary~\ref{representations-of-ample}. Section~5 proves
Theorem~\ref{extension-of-ample}. Finally, the appendix collects the
representation-theoretic material concerning \(\GL_r\) used in
Section~4.

\subsection*{Notation}

By a vector bundle on a scheme \(X\) we mean a finite locally free \(\mathcal O_X\)-module of positive
rank on every connected component, although the rank may vary between different connected
components. When we say that a vector bundle has rank \(r\), we mean that
it has constant rank \(r\) on \(X\).

For a quasi-coherent  \(\mathcal O_X\)-module \(E\), we set
\[
\VV_X(E)
:=
\Spec_X\bigl(\Sym_{\cO_X}E\bigr),
\qquad
\PP_X(E)
:=
\Proj_X\bigl(\Sym_{\cO_X}E\bigr).
\]
Thus if $E$ is a vector bundle then 
\(\PP_X(E)\) parametrizes rank one locally free quotients of \(E\) and
\(\cO_{\PP_X(E)}(1)\) denotes the tautological quotient line bundle.
If  \(X= \Spec R\) and $E=\widetilde M$, then we use
the notation $\VV_R(M)$  and $\PP_R(M)$ for  $\VV_X(E)$  and $\PP_X(E)$, respectively.
For a finite projective \(R\)-module \(M\) we set
\[
\mathbb A_R(M)
:=
\operatorname{Spec}_R
\bigl(\operatorname{Sym}_R(M^*)\bigr)
=
\mathbb V_R(M^*).
\]
Thus \(\mathbb A_R(M)\) is the affine \(R\)-space associated with \(M\).

Let \(X\) be an \(S\)-scheme and let \(T\to S\) be a morphism. We write
$X_T:=X\times_S T$
and, if $p_X\colon X_T\to X$ is the projection and \( E\) is
an \(\mathcal O_X\)-module, we set
$ E_T:=p_X^* E.$
Similarly, for an \(S\)-group scheme \(G\), we write
\(G_T:=G\times_S T\). If \(f\colon X\to S\) is a morphism and
\( E\) is an \(\mathcal O_S\)-module, we often abbreviate
\(f^*E\) to \( E_X\).

Let \(f\colon X\to S\) be a morphism and let \(E\) be a vector bundle
of rank \(r\) on \(X\). Its frame bundle is the right
\(\operatorname{GL}_{r,X}\)-torsor
\[
\operatorname{Fr}(E)
:=
\underline{\operatorname{Isom}}_X
\bigl(\mathcal O_X^{\oplus r},E\bigr).
\] If \(W\) is a finite locally free
\(\GL_{r,S}\)-module of positive rank on every connected component, we define the associated
vector bundle
\[
E(W)
:=
\operatorname{Fr}(E)\times^{\GL_{r,X}} W_X
\]
as the quotient of
$\operatorname{Fr}(E)\times_X W_X$ by the right action
$(p,w)\cdot g
=
\bigl(p\circ g,g^{-1}w\bigr).$

The construction \(W\mapsto E(W)\) commutes with arbitrary base change
and with direct sums, tensor products, duals, symmetric powers,
exterior powers, and divided powers.

\section{Preliminaries}

\subsection{Ample vector bundles}

Let us recall the following characterization of ample
vector bundles; see \cite[Proposition~3.2]{Hartshorne-Ample}.

\begin{proposition}\label{equivalent-definitions}
	Let \(X\) be a scheme and let \(E\) be a vector bundle on \(X\).
	The following conditions are equivalent:
	\begin{enumerate}
		\item \(E\) is ample, that is,
		\(\cO_{\PP_X(E)}(1)\) is ample;
		\item \(X\) is quasi-compact and quasi-separated, and for every
		quasi-coherent \(\cO_X\)-module \(F\) of finite type, there exists
		an integer \(n_0>0\) such that
		\[
		\Sym^nE\otimes F
		\]
		is globally generated for every \(n\geq n_0\).
	\end{enumerate}
	Moreover, in the second condition it is equivalent to let \(F\)
	range only over finitely presented quasi-coherent
	\(\cO_X\)-modules.
\end{proposition}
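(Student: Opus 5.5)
Write \(\pi\colon P:=\PP_X(E)\to X\) for the projection and \(L:=\cO_P(1)\). I would prove the proposition by comparing global sections on \(P\) and on \(X\) through the identity
\[
\pi_*\bigl(L^{\otimes n}\otimes\pi^*F\bigr)\cong \Sym^nE\otimes F,
\qquad
H^0\bigl(P,L^{\otimes n}\otimes \pi^*F\bigr)=H^0(X,\Sym^nE\otimes F),
\]
which follows from the projection formula and \(\pi_*L^{\otimes n}=\Sym^nE\) for \(n\ge 0\). I will use the standard characterization of an ample line bundle on a quasi-compact, quasi-separated scheme: for every finite type quasi-coherent module \(G\), the sheaf \(G\otimes L^{\otimes n}\) is globally generated for all \(n\gg0\). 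It suffices to test this on finitely presented \(G\), or even only on ideal sheaves of finite type. Throughout, \(P\to X\) is surjective, proper and of finite presentation. Hence \(X\) is quasi-compact and quasi-separated if and only if \(P\) is; here one uses that the structure morphism is affine-locally \(\Proj\) of a finitely generated algebra.

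\emph{(1)\(\Rightarrow\)(2).} Let \(F\) be of finite type on \(X\). Then \(\pi^*F\) is of finite type on \(P\), so \(L^{\otimes n}\otimes\pi^*F\) is globally generated on \(P\) for \(n\ge n_0\). Fix \(x\in X\) and an affine neighbourhood \(U\ni x\). Over \(U\), the natural map \(\pi^*\pi_*(L^{n}\otimes\pi^*F)\to L^n\otimes\pi^*F\) is surjective, and the global sections of \(P\) pushed down give a subsheaf \(\mathcal S\subset\Sym^nE\otimes F\) whose pullback surjects onto \(L^n\otimes\pi^*F\). I would then pass to the fiber \(P_x\cong\PP^{r-1}_{k(x)}\). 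There the sections span \((\Sym^nE\otimes F)(x)=H^0(P_x,\cO(n)\otimes F(x))\) exactly when they generate \(\cO(n)\otimes F(x)\) in a sufficiently strong sense. This is the \emph{main obstacle}: generation of \(\cO(n)\) on \(\PP^{r-1}\) by a subspace of \(H^0\) does not force that subspace to be all of \(H^0\).

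To get around this, I would use the same trick as Hartshorne. Replace \(F\) by \(F\otimes \Sym^m E\)-type auxiliary twists, or more cleanly use Serre vanishing relative to \(\pi\) together with the fact that \(L\) is ample on \(P\). Since \(L\) is ample, for \(n\gg0\) the sections of \(L^n\otimes\pi^*F\) separate fibers in the sense that the restriction \(H^0(P,L^n\otimes\pi^*F)\to H^0(P_x,\cdot)\) is surjective. One way to see this is to apply global generation to \(\mathcal I_{P_x}\otimes L^{a}\) and combine it with multiplication maps. Concretely, I would choose \(a\) with \(L^a\) very ample, relative to \(\pi\), over a neighbourhood, and let the ideal \(\mathcal I\) of the closed fiber over \(x\) play the role of the finite type sheaf. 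The surjectivity of \(\Sym^a E\otimes\Sym^bE\to\Sym^{a+b}E\) then lets me upgrade generation to surjectivity on fibers uniformly in \(n\). Quasi-compactness of \(X\) makes \(n_0\) uniform in \(x\), via finitely many affine opens and the upper semicontinuity of the non-generated locus.

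\emph{(2)\(\Rightarrow\)(1).} Let \(G\) be finitely presented on \(P\). Then \(G\) is a quotient of \(\pi^*F\otimes L^{-m}\) for some finite type \(F\) on \(X\) and some \(m\), since \(L\) is \(\pi\)-ample and \(X\) is qcqs. So \(G\otimes L^{n}\) is a quotient of \(\pi^*F\otimes L^{n-m}\). The sheaf \(\Sym^{n-m}E\otimes F\) is globally generated for \(n\gg0\), and \(\pi^*\Sym^{n-m}E\to L^{n-m}\) is surjective. Hence \(\pi^*F\otimes L^{n-m}\) is globally generated by pulled-back sections, and therefore so is \(G\otimes L^n\). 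This gives ampleness of \(L\).

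The final clause follows from the same argument. In (2)\(\Rightarrow\)(1) only finitely presented \(F\) were needed, since we may take \(F\) finitely presented; and any finite type \(F\) on a qcqs scheme is a filtered limit of finitely presented modules surjecting onto it, with global generation passing to quotients.
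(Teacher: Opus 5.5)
Your (2)$\Rightarrow$(1) argument is fine: you write a finite type $G$ on $P$ as a quotient of $\pi^*F\otimes L^{-m}$ and push generating sections through $\pi^*\Sym^{n-m}E\twoheadrightarrow L^{n-m}$. Your handling of the finitely presented clause is also fine. This is Hartshorne's argument, and it is all the paper invokes; the paper simply cites Hartshorne's Proposition~3.2 and asserts that the proof carries over.

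\emph{The gap is in (1)$\Rightarrow$(2)}, exactly at the step you call the main obstacle. You need the image $\Sigma_n$ of $H^0(X,\Sym^nE\otimes F)$ in $F(x)\otimes\Sym^nE(x)$ to be everything, not just a base-point-free subspace. None of your remedies shows this:
\begin{enumerate}
\item Relative Serre vanishing for $\pi$ is a statement about the sheaves $R^i\pi_*$ on $X$. It gives no information about global sections.
\item Surjectivity of $H^0(P,\pi^*F\otimes L^n)\to H^0(P_x,\,\cdot\,)$ would follow from $H^1(P,\mathcal I_{P_x}\otimes\pi^*F\otimes L^n)=0$. Absolute Serre vanishing fails on non-proper schemes: on the quasi-affine $\mathbb A^2\setminus\{0\}$ the trivial bundle is ample, yet $H^1(\cO)\neq 0$.
\item Global generation of $\mathcal I_{P_x}\otimes\pi^*F\otimes L^n$ only produces sections vanishing on $P_x$, so it adds nothing to $\Sigma_n$.
\item ``$L^a$ very ample relative to $\pi$'' already holds for $a=1$ and says nothing about absolute sections.
\item Surjectivity of $\Sym^aE\otimes\Sym^bE\to\Sym^{a+b}E$ is a sheaf statement. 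It does not control products of \emph{global} sections.
\end{enumerate}
Your uniformity step also has a hole. Even granting pointwise surjectivity for $n\ge n(x)$, the neighbourhood on which $\Sym^nE\otimes F$ is generated may shrink with $n$. You need an extra device, e.g.\ multiplying by sections of a fixed $\Sym^mE$ that generate near $x$.

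\emph{The missing idea} is to use the absolute ampleness of $L$ to control the whole restricted linear system, not just its base points. Take $X$ of finite type over a noetherian ring $A$.
\begin{enumerate}
\item Some $L^m$ is very ample over $A$, so finitely many $s_i\in H^0(X,\Sym^mE)$ give an immersion $\varphi\colon P\to\PP^N_A$.
\item Since $\pi$ is proper, $(\varphi,\pi)\colon P\to\PP^N_X$ is a closed immersion.
\item Relative Serre vanishing for $\PP^N_X\to X$ then shows that $\Sym^{mk}E=\pi_*L^{mk}$ is generated by monomials in the $s_i$ for $k\gg0$.
\item For each $n_1\ge n_0$ in a residue system modulo $m$, choose a surjection $\cO_P^{\oplus M}\to\pi^*F\otimes L^{n_1}$ given by sections of $F\otimes\Sym^{n_1}E$, with kernel $K$.
\item Since $R^1\pi_*(K\otimes L^{mk})=0$ for $k\gg0$, the map $(\Sym^{mk}E)^{\oplus M}\to F\otimes\Sym^{n_1+mk}E$ is surjective.
\item Its source is globally generated for $k\gg 0$, hence so is its target.
\end{enumerate}
In the general quasi-compact quasi-separated setting, you would replace the embedding by the open immersion $P\to\Proj\Gamma_*(P,L)$ with affine opens $P_s$ (EGA~II, \S4.5), and run a fibrewise version of this argument. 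Without a step of this kind, (1)$\Rightarrow$(2) is not established.
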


We shall also use the following properties; see
\cite[Proposition~2.2]{Hartshorne-Ample}.

\begin{proposition}\label{direct-sums}
	Every quotient vector bundle of an ample vector bundle on \(X\) is ample.
	Moreover, if \(E_1\) and \(E_2\) are vector bundles on \(X\), then
	\(E_1\oplus E_2\) is ample if and only if both \(E_1\) and \(E_2\) are
	ample.
\end{proposition}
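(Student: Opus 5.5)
For the first assertion I would use characterization (2) of Proposition~\ref{equivalent-definitions}. Let \(E\to E'\) be a surjection onto a vector bundle \(E'\). For every \(n\) it induces a surjection \(\Sym^nE\to \Sym^nE'\), and hence, for any quasi-coherent \(F\) of finite type, a surjection \(\Sym^nE\otimes F\to \Sym^nE'\otimes F\). A quotient of a globally generated sheaf is globally generated. So the integer \(n_0\) that works for \(E\) and \(F\) also works for \(E'\) and \(F\). The condition that \(X\) be quasi-compact and quasi-separated depends only on \(X\), so \(E'\) is ample.

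There is a geometric alternative, which I would mention as a cross-check. The surjection gives a closed immersion \(\PP_X(E')\hookrightarrow \PP_X(E)\) under which \(\cO_{\PP_X(E)}(1)\) restricts to \(\cO_{\PP_X(E')}(1)\). The restriction of an ample line bundle to a closed subscheme is ample.

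For the direct sum, one direction is immediate from the first part: \(E_1\) and \(E_2\) are quotients of \(E_1\oplus E_2\), so they are ample whenever \(E_1\oplus E_2\) is. For the converse, assume \(E_1\) and \(E_2\) are ample and fix \(F\) of finite type. I would use the decomposition
\[
\Sym^n(E_1\oplus E_2)\otimes F\cong\bigoplus_{a+b=n}\Sym^aE_1\otimes\Sym^bE_2\otimes F ,
\]
and show that every summand is globally generated once \(n\) is large. The thresholds are chosen in the following order.
\begin{enumerate}
\item Choose \(m_1\) such that \(\Sym^aE_1\otimes F\) is globally generated for all \(a\ge m_1\).
\item Choose \(m_2\) such that \(\Sym^bE_2\) is globally generated for all \(b\ge m_2\); this is the case \(F=\cO_X\).
\item For each of the finitely many \(a<m_1\), apply the ampleness of \(E_2\) to the finite type sheaf \(\Sym^aE_1\otimes F\). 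Let \(N_2\) be the maximum of the resulting thresholds.
\item Symmetrically, for each \(b<m_2\), apply the ampleness of \(E_1\) to \(\Sym^bE_2\otimes F\), and let \(N_1\) be the maximum of those thresholds.
\end{enumerate}
Now put \(n_0=N_1+N_2+m_1+m_2\) and take \(n\ge n_0\) with \(a+b=n\). There are three cases.
\begin{enumerate}
\item If \(a<m_1\), then \(b\ge N_2\), and the summand is globally generated by the choice of \(N_2\).
\item If \(b<m_2\), then \(a\ge N_1\), and the summand is globally generated by the choice of \(N_1\).
\item Otherwise \(a\ge m_1\) and \(b\ge m_2\). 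Then the summand is the tensor product of the globally generated sheaves \(\Sym^aE_1\otimes F\) and \(\Sym^bE_2\). A tensor product of globally generated sheaves is globally generated, since tensoring two surjections from free modules gives a surjection from a free module.
\end{enumerate}

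No step is a genuine obstacle. The only point requiring care is the bookkeeping of thresholds in the mixed range, in particular using the finiteness of the sets \(\{a<m_1\}\) and \(\{b<m_2\}\) so that only finitely many sheaves enter the choice of \(N_1\) and \(N_2\).
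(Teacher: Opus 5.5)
Your proof is correct and is essentially the argument the paper relies on. The paper gives no proof of its own: it cites Hartshorne's Proposition~2.2, which works directly with the global-generation characterization (here Proposition~\ref{equivalent-definitions}), using the surjectivity of \(\Sym^nE\to\Sym^nE'\) for quotients and the decomposition of \(\Sym^n(E_1\oplus E_2)\) with threshold bookkeeping of the kind you carry out for direct sums; the only point specific to arbitrary schemes, that \(\Sym^aE_1\otimes F\) is again quasi-coherent of finite type (or finitely presented), holds because \(\Sym^aE_1\) is finite locally free.
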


Finally, we recall \cite[Corollary~2.6]{Hartshorne-Ample}.

\begin{proposition}\label{determinant}
	If \(E\) is an ample vector bundle of rank \(r\) on \(X\), then
	$\det E:=\bigwedge^r E$	is ample.
\end{proposition}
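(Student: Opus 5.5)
The plan is to reduce to the case of a line bundle, where the statement becomes the classical fact that a positive tensor power of an ample line bundle on a quasi-compact scheme is ample. I use characterization (2) of Proposition~\ref{equivalent-definitions}. First, $X$ is quasi-compact and quasi-separated, since $E$ is ample. Let $r=\rk E$. This rank is locally constant; if it varies, I would work on each of the finitely many open and closed pieces where it is constant. On such a piece, $E^{\otimes r}$ is a vector bundle that surjects onto $\bigwedge^r E$, but that surjection runs the wrong way for our purposes, so I would not use it directly.

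Instead I would use the multiplication maps of the symmetric algebra. For each $n$, the multiplication map $\Sym^{n}E\otimes \Sym^{m}E\to \Sym^{n+m}E$ is surjective. Together with the natural surjection $\Sym^{r}E\to\det E$ (antisymmetrization is not available in general characteristic, so this must be handled with care), it links symmetric powers of $E$ to powers of $\det E$. The cleaner route is to show that $\PP_X(\det E)\cong X$ with $\cO(1)=\det E$. By definition, $\det E$ is ample exactly when the line bundle $L:=\det E$ is ample on $X$.

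To see that $L$ is ample, I would embed $X$ into $\PP_X(E)$ via a section only locally; this does not work globally. The correct tool is the Plücker-type construction. Let $\pi\colon Y=\PP_X(E)\to X$ be the projection. There is a finite surjective morphism from the full flag bundle, or more simply from $Y^{\times_X r}$ with the line bundle $\boxtimes\cO_Y(1)$, which maps naturally onto an $X$-scheme carrying $\det E$. Concretely, on $Z:=Y\times_X\cdots\times_X Y$ ($r$ factors), the bundle $M:=\bigotimes_i p_i^*\cO_Y(1)$ is ample, because a product of relatively ample bundles that are each ample is ample on the fibre product. The universal quotients then give a map $\pi_Z^*E^{\otimes r}\to M$, and hence $\pi_Z^*\det E\to M$, which is nonzero generically. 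This alone does not yield ampleness of $L$.

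I therefore expect to fall back on Hartshorne's original argument in \cite[Corollary~2.6]{Hartshorne-Ample}, which works with $\Sym^n E\otimes F$ and a local splitting argument. The key steps are as follows. Given a coherent $F$, I want $L^{\otimes n}\otimes F$ to be generated for large $n$. The inclusion $L^{\otimes n}\hookrightarrow\Sym^{nr}$ of a suitable bundle fails in positive characteristic, so instead I would use the surjection $(\Sym^n E)^{\otimes r}\to\Sym^{n}E\otimes\cdots$. Alternatively, I would apply the Grassmannian form of ampleness, namely that quotients of ample bundles and the Plücker embedding $\mathrm{Gr}\hookrightarrow\PP(\bigwedge^r)$ behave well.

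The main obstacle is making the degree comparison work independently of the characteristic. I would first try to prove ampleness of $L^{\otimes r!}$ or another power via the norm/finite-morphism trick: $L^{\otimes N}$ is ample iff $L$ is ample, and ampleness can be checked after pullback along a finite surjective map. This requires proving that the relevant map from $Z$ is finite.
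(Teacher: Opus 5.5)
Your proposal does not yet contain a proof. It sketches several strategies and abandons each, and the steps it does commit to fail.

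\emph{The surjection $\Sym^rE\to\det E$.} The ``natural surjection'' you invoke does not exist for $r\ge 2$ outside characteristic $2$. For the standard module $V$ with basis $e_1,\dots,e_r$, the weight-$(1,\dots,1)$ part of $\Sym^rV$ is spanned by $e_1\cdots e_r$. A transposition of $e_1,e_2$ fixes this element but negates $e_1\wedge\cdots\wedge e_r$. Hence every $\GL_r$-equivariant map $\Sym^rV\to\bigwedge^rV$ vanishes. Even where such a map exists, you would need $\Sym^rE$ to be ample. In this paper that is only obtained downstream of Proposition~\ref{determinant}, which supplies the ample line bundle $H$ in Sections~2 and~3, so the argument would be circular.

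\emph{The Pl\"ucker idea.} It is empty here: the Grassmannian of rank-$r$ quotients of $E$ is $X$ itself, and its Pl\"ucker embedding is the identity $X=\PP_X(\det E)$.

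\emph{The finite-morphism trick.} There is no finite morphism for it to act on. The map $Z=Y\times_X\cdots\times_XY\to X$ has fibres $(\PP^{r-1})^{r}$. The ample bundle $M$ on $Z$ is not a pullback of $\det E$; you only obtain a section of $M\otimes\pi_Z^*(\det E)^{-1}$, which, as you observe, says nothing about $\det E$.

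The missing ingredient is exactly what the paper substitutes for Hartshorne's representation-theoretic step: the canonical identity $\det(\Sym^nE)\simeq(\det E)^{\otimes a_n}$ with $a_n=\binom{n+r-1}{r}$, valid over any base. Your line about $(\Sym^nE)^{\otimes r}$ points in the right direction, but the exponent must be $N:=\rk\Sym^nE=\binom{n+r-1}{r-1}$ and the target its top exterior power.

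Given a finite type quasi-coherent $F$, apply Proposition~\ref{equivalent-definitions} to $F':=\cO_X\oplus F$. For $n\ge n_0$ the sheaf $\Sym^nE\otimes F'$ is globally generated, hence so is $(\Sym^nE\otimes F')^{\otimes N}$. This surjects onto $\bigwedge^N(\Sym^nE)\otimes (F')^{\otimes N}\simeq(\det E)^{\otimes a_n}\otimes (F')^{\otimes N}$, which contains $(\det E)^{\otimes a_n}\otimes F$ as a direct summand. So $(\det E)^{\otimes a_n}\otimes F$ is globally generated for all $n\ge n_0$.

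The $r$-th finite difference of $n\mapsto a_n$ equals $1$, so the $a_n$ with $n\ge n_0$ have greatest common divisor $1$. Combining this with the case $F=\cO_X$ shows that $(\det E)^{\otimes k}\otimes F$ is globally generated for all $k\gg0$. Proposition~\ref{equivalent-definitions}, applied to the line bundle $\det E$ on the quasi-compact quasi-separated scheme $X$, then gives ampleness. No finite morphism, Pl\"ucker embedding, or characteristic-dependent splitting is needed.
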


Hartshorne states these results for schemes of finite type over an
algebraically closed field; see \cite[Section~1]{Hartshorne-Ample}.
The proofs of Propositions~\ref{equivalent-definitions} and
\ref{direct-sums} extend to the present setting by using the corresponding
forms of the standard results on ample invertible sheaves and
projective bundles. Proposition~\ref{determinant} also remains valid.
In its proof, Hartshorne's representation-theoretic argument over a field
is replaced by the canonical identity
\[
\det\!\left(\Sym^n E\right)
\simeq
(\det E)^{\otimes\binom{n+r-1}{r}},
\]
which is valid for vector bundles over an arbitrary scheme.
 
\medskip
 
Now let \(f\colon X\to S\) be a morphism. Let \(E\) be a vector bundle
on \(X\) and let $\pi\colon \PP_X(E)\to X$ denote
the canonical projection. We say that \(E\) is \emph{\(f\)-ample} if 
$\cO_{\PP_X(E)}(1)$ is \((f\circ\pi)\)-ample (see 	\cite[Definition~\texttt{01VH}]{Stacks}). This property is local on \(S\). For convenience, we record the following simple fact:

\begin{lemma}\label{relatively-ample-implies-qcqs}
Let \(f\colon X\to S\) be a morphism. If there exists an $f$-ample vector bundle $E$  on $X$,
then \(f\) is quasi-compact and separated.
\end{lemma}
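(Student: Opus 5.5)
The plan is to derive both properties from the corresponding properties of the structure morphism of \(P:=\PP_X(E)\) over \(S\), and then transfer them down along \(\pi\colon P\to X\). By definition, \(\cO_P(1)\) is \((f\circ\pi)\)-ample. By the Stacks Project characterization of relatively ample invertible sheaves (the definition \texttt{01VH} together with the lemmas following it), the existence of an \((f\circ\pi)\)-ample invertible sheaf forces \(f\circ\pi\) to be quasi-compact and separated. Indeed, over every affine open \(V\subset S\) the sheaf \(\cO_P(1)\) is ample on \((f\circ\pi)^{-1}(V)\). A scheme carrying an ample invertible sheaf is quasi-compact and separated (\texttt{01PS}, \texttt{01QE}), so \((f\circ\pi)^{-1}(V)\) is quasi-compact and separated. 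Quasi-compactness and separatedness of a morphism are local on the target, so \(f\circ\pi\) has both properties.

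Next we descend to \(f\). Since \(E\) has positive rank on every connected component, \(\pi\) is surjective: every fibre is a projective space \(\PP^{r-1}\) with \(r\ge 1\). For quasi-compactness, take an affine \(V\subset S\). Then \(f^{-1}(V)=\pi\bigl((f\circ\pi)^{-1}(V)\bigr)\) is the continuous image of a quasi-compact space, hence quasi-compact.

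For separatedness I would use the following standard fact: if \(g\circ\pi\) is separated and \(\pi\) is surjective and universally closed, then \(g\) is separated (Stacks, Lemma \texttt{09MQ}). Here \(\pi\) is proper, being the projective bundle of a finite locally free module, and it is surjective by the previous paragraph. Alternatively, one can argue directly. The diagonal \(\Delta_{P/S}\) factors through \(P\times_X P\to P\times_S P\). The image of \(\Delta_{X/S}(X)\) under the identification \(\pi\times\pi\) is closed, because \((\pi\times\pi)\colon P\times_S P\to X\times_S X\) is surjective and universally closed, and the preimage of \(\Delta_X\) is \(P\times_X P\). That preimage is closed in \(P\times_S P\): it is the preimage of the diagonal of \(X\), and it can be checked to be closed since \(P\to S\) is separated and \(P\times_X P\to P\times_S P\) is a closed immersion base-changed from \(\Delta_{X/S}\)... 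This is circular, so I would simply cite the Stacks lemma rather than reprove it.

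The only step requiring care is this descent of separatedness through the surjective proper map \(\pi\). Its essential input is that the image of a closed set under a universally closed surjection detects closedness of the diagonal. Surjectivity is exactly where the convention that vector bundles have positive rank on every connected component is used: a rank-zero component would be invisible to \(P\).
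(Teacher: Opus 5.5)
Your proposal is correct and matches the paper's proof: $f\circ\pi$ is quasi-compact and separated because $\cO_{\PP_X(E)}(1)$ is $(f\circ\pi)$-ample, quasi-compactness passes to $f$ by surjectivity of $\pi$, and separatedness descends through Stacks Lemma \texttt{09MQ} because $\pi$ is surjective and universally closed. Delete the abandoned ``direct'' paragraph; if you want a self-contained argument, note that $\Delta_{X/S}(X)$ is the image of the closed set $\Delta_{P/S}(P)$ under the universally closed map $\pi\times_S\pi\colon P\times_S P\to X\times_S X$, which uses only the surjectivity of $\pi$.
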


\begin{proof}
Since \(\cO_{\PP_X(E)}(1)\) is \((f\circ\pi)\)-ample, the morphism \(f\circ\pi\) is quasi-compact by the definition of relative ampleness  and it is separated by \cite[Lemma~{01VI}]{Stacks}. 
On the other hand, \(\pi\) is projective and surjective, so $f$ is quasi-compact.
Since  \(\pi\)	is also universally closed, separatedness descends to \(f\) by
\cite[Lemma~{09MQ}]{Stacks}.	
\end{proof}

\subsection{Relative Geometric Invariant Theory}\label{subsection:relativeGIT}

\subsubsection{Semistable loci over algebraically closed fields}
Let \(k\) be an algebraically closed field, let \(G\) be a reductive algebraic group over \(k\),
and let \(X\) be a projective \(k\)-scheme equipped with a
\(G\)-action. Suppose that \(L\) is an ample \(G\)-linearized line
bundle on \(X\). A point \(x\in X\) is called \emph{\(L\)-semistable}
(or just semistable if \(L\) is clear)
if there exist
\(n>0\) and a \(G\)-invariant section
\[
s\in H^0\bigl(X,L^{\otimes n}\bigr)^G
\]
such that \(s(x)\ne 0\). The locus of
\(L\)-semistable points is denoted by
\(X^{\mathrm{ss}}(L)\). If \(V\) is a finite-dimensional \(G\)-module, then
\(\mathcal O_{\mathbb P(V)}(1)\) carries its natural
\(G\)-linearization. In this case we abbreviate
\[
\mathbb P(V)^{\mathrm{ss}}
:=
\mathbb P(V)^{\mathrm{ss}}
\bigl(\mathcal O_{\mathbb P(V)}(1)\bigr).
\]
This is the notion of semistability that we shall use on geometric fibers.

\subsubsection{Power reductivity}

In this subsubsection we recall a few results of van der Kallen (see also \cite[Theorems~3 and~12]{FvdK} for the analogous results for Chevalley group schemes).

\begin{definition}[{see \cite[Definition 1]{vdK}}]
	Let \(R\) be a commutative ring and let \(G\) be a flat affine group scheme over \(R\). The group scheme \(G\) is called \emph{power reductive} if, for every surjective \(G\)-equivariant homomorphism $\varphi\colon M\twoheadrightarrow R$,	where \(G\) acts trivially on \(R\), there exists an integer \(d>0\) such that
	\[
	\Sym_R^d\varphi\colon\Sym_R^dM\longrightarrow\Sym_R^dR\simeq R
	\]
	admits a \(G\)-equivariant section. Equivalently, there exists
	\(t\in(\Sym_R^dM)^G\) such that $(\Sym_R^d\varphi)(t)=1.$
\end{definition}

A homomorphism \(A\to B\) of commutative \(R\)-algebras is called
\emph{power-surjective} if, for every \(b\in B\), there exists an
integer \(n>0\) such that \(b^n\) belongs to the image of \(A\to B\).

\begin{proposition}[{\cite[Proposition~7]{vdK}}]
	\label{prop:power-reductivity}
	Let \(G\) be a flat affine group scheme over a commutative ring
	\(R\). The following conditions are equivalent:
	\begin{enumerate}
		\item \(G\) is power reductive over \(R\);
		\item for every power-surjective \(G\)-equivariant homomorphism
		\(A\to B\) of commutative \(R\)-algebras, the induced homomorphism
		$A^G\longrightarrow B^G$
		is power-surjective;
		\item for every surjective \(G\)-equivariant homomorphism
		\(A\twoheadrightarrow B\) of commutative \(R\)-algebras,
		\(B^G\) is integral over the image of \(A^G\to B^G\).
	\end{enumerate}
\end{proposition}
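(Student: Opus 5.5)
The plan is to prove the cycle of implications \((1)\Rightarrow(2)\Rightarrow(3)\Rightarrow(1)\), with each step reduced to a concrete algebraic construction.

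\emph{\((1)\Rightarrow(2)\).} Let \(\psi\colon A\to B\) be power-surjective and \(G\)-equivariant, and let \(b\in B^G\). Choose \(n>0\) and \(a\in A\) with \(\psi(a)=b^n\). I would not try to average \(a\) directly. Instead, form the \(G\)-submodule
\[
M:=\{(a',r)\in A\times R:\ \psi(a')=r\,b^n\}\subset A\times R,
\]
where \(G\) acts trivially on the factor \(R\). This is \(G\)-stable because \(b^n\) is invariant and \(\psi\) is equivariant. The projection \(\varphi\colon M\to R\) is \(G\)-equivariant and surjective, since \((a,1)\in M\). Power reductivity gives \(d>0\) and \(t\in(\Sym^d_R M)^G\) with \((\Sym^d\varphi)(t)=1\). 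Multiplying first coordinates defines a \(G\)-equivariant map \(\mu\colon\Sym^d M\to A\), \((a_1,r_1)\cdots(a_d,r_d)\mapsto a_1\cdots a_d\). Then \(\mu(t)\in A^G\), and \(\psi(\mu(t))=(\Sym^d\varphi)(t)\,b^{nd}=b^{nd}\). This argument needs no local finiteness of comodules.

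\emph{\((2)\Rightarrow(3)\).} A surjection is in particular power-surjective, so every \(b\in B^G\) has a power \(b^N\) lying in the image of \(A^G\). Hence \(b\) is a root of \(T^N-\psi(a)\) and is integral over that image.

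\emph{\((3)\Rightarrow(1)\).} This is the step I expect to be the main obstacle, because integrality gives an equation mixing several degrees, whereas power reductivity demands a single degree \(d\). Given \(\varphi\colon M\twoheadrightarrow R\), apply (3) to \(\Sym M\twoheadrightarrow\Sym R=R[x]\). The element \(x\) is invariant, so it satisfies
\[
x^m+c_1x^{m-1}+\dots+c_m=0,
\]
with each \(c_i\) the image of some invariant. Since the \(G\)-action preserves the grading, \((\Sym M)^G=\bigoplus_d(\Sym^d M)^G\). Taking the degree-\(m\) component of the equation, I may assume \(c_i=r_ix^i\) is the image of an element of \((\Sym^i M)^G\). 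This gives \(1+\sum_i r_i=0\) in \(R\).

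For \(D>0\), let \(J_D\subset R\) be the set of \(r\) such that \(rx^D\) lies in the image of \((\Sym^D M)^G\); this is an \(R\)-submodule. Products of invariants show that \(r\in J_i\) implies \(r^k\in J_{ki}\). Put \(L:=m!\) and \(u_i:=r_i^{L/i}\in J_L\). The ideal generated by the \(u_i\) has radical containing every \(r_i\), hence containing \(\sum r_i=-1\); so the ideal is all of \(R\). Writing \(1=\sum s_iu_i\) then shows \(1\in J_L\). This means there is \(t\in(\Sym^L M)^G\) with \((\Sym^L\varphi)(t)=1\), which is power reductivity with \(d=L\).
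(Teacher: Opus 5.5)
Your proof is correct: all three implications go through as written, using only the definition of power reductivity as the paper states it (an arbitrary \(G\)-module \(M\) with target \(R\)). The paper itself gives no proof of this proposition; it quotes it from \cite[Proposition~7]{vdK}. So there is no internal argument to compare against, and what you have written is a self-contained substitute for the citation.

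The two places where real work is needed are handled well.
\begin{itemize}
\item In \((1)\Rightarrow(2)\), the module \(M=\ker\bigl(A\oplus R\to B,\ (a',r)\mapsto\psi(a')-rb^n\bigr)\) is the right device. The preimage of the cyclic module \(Rb^n\) would not do, because \(Rb^n\simeq R/\operatorname{Ann}(b^n)\) need not be free and the definition only allows target \(R\). Your construction also treats power-surjective maps directly, rather than first reducing to surjective ones.
\item One point deserves to be stated explicitly. \(M\) is a \(G\)-submodule because it is the kernel of an equivariant map and \(G\) is flat, so kernels of comodule maps are comodules. ``\(G\)-stable'' alone is not quite the reason.
\item In \((3)\Rightarrow(1)\), the crux is passing from an integral equation of mixed degrees to a single degree. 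Your argument settles it cleanly with \(d=m!\): the ideals \(J_D=(\Sym^D\varphi)\bigl((\Sym^DM)^G\bigr)\) satisfy \(J_iJ_j\subseteq J_{i+j}\), and the radical of \(\bigl(r_1^{L/1},\dots,r_m^{L/m}\bigr)\subseteq J_L\) contains \(\sum r_i=-1\).
\end{itemize}

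Nothing in your argument uses noetherianity or local finiteness of comodules. It therefore covers the proposition in the full generality in which the paper invokes it, for example over the local rings \(R_{\mathfrak p}\) in Lemma~\ref{semistability}.
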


\begin{theorem}[{\cite[Theorem~8]{vdK}}]
	\label{thm:finite-generation-invariants}
	Let \(R\) be a noetherian ring, let \(G\) be a power reductive flat
	affine group scheme over \(R\), and let \(A\) be a finitely generated
	commutative \(R\)-algebra equipped with a \(G\)-action by
	\(R\)-algebra automorphisms. Then the invariant algebra \(A^G\) is
	finitely generated over \(R\).
\end{theorem}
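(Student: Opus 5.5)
This statement is cited from van der Kallen, but a self-contained plan runs as follows. Write \(A=R[a_1,\dots,a_m]\). Since \(A\) is a quotient of a polynomial ring, and we want finite generation of the invariants, we first reduce to a graded situation. Choose a finite \(G\)-stable \(R\)-submodule \(M\subset A\) containing the generators; this is possible because comodules over a flat Hopf algebra are locally finite over a noetherian base. The natural surjection \(\Sym_R M\twoheadrightarrow A\) is then \(G\)-equivariant. By Proposition~\ref{prop:power-reductivity}(3), \(A^G\) is integral over the image of \((\Sym_R M)^G\). So if \((\Sym_R M)^G\) is finitely generated, then \(A^G\) is an integral extension of a finitely generated \(R\)-algebra sitting inside the noetherian ring \(A\). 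We still need module-finiteness, though. I would get it by showing that \(A^G\) is generated over the image \(B\) of \((\Sym_R M)^G\) by finitely many integral elements. This uses noetherianity of \(B\), together with the fact that \(A\) is a finite module over a suitable finitely generated subring only after adjoining \(A^G\), which is circular. So the better route is to prove finite generation directly for \(A\) by a Hilbert-type argument, as below.

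\textbf{Hilbert-type argument.} Reduce to graded \(A\) via \(\Sym_R M\): treat the graded algebra \(S=\Sym_R M\) first, with \(G\) preserving the grading. Let \(J\subset S\) be the ideal generated by the positive-degree invariants \(S^G_+\). Since \(S\) is noetherian, \(J\) is generated by finitely many homogeneous invariants \(f_1,\dots,f_n\). Set \(C=R[f_1,\dots,f_n]\subset S^G\). Apply Proposition~\ref{prop:power-reductivity}(3) to the surjection \(S\twoheadrightarrow S/J\). Every positive-degree invariant of \(S\) maps to zero, while the invariants of \(S/J\) are integral over the image of \(S^G\), which is \(R\) in degree \(0\). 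The key step is instead to show that \(S^G\) is integral over \(C\).

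\textbf{Integrality over \(C\).} Consider the surjection \(S\to S/(f_1,\dots,f_n)S = S/J\). For homogeneous \(h\in S^G\) of positive degree we have \(h\in J\). Following van der Kallen, one proves by induction on degree that a power of \(h\) lies in \(C\cdot\) (lower terms). Concretely, write \(h=\sum s_i f_i\). The \(s_i\) are not invariant, so power reductivity is used in place of the Reynolds operator: apply condition (2) to the power-surjective map \(S\otimes\) (module structure) giving \(h^N\in\sum C\,S^G_{<\deg}\). This yields integrality of \(S^G\) over \(C\) degree by degree. Then \(S^G\) is a graded integral extension of the finitely generated algebra \(C\) whose graded pieces are finite \(R\)-modules, since \(S_d\) is finite over noetherian \(R\). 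Integral and of finite type in each degree implies module-finite over \(C\), by the graded Nakayama argument using that \(S^G/C_+S^G\) is integral over \(R\) and graded with finite pieces, hence concentrated in bounded degrees. Therefore \(S^G\) is finitely generated.

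\textbf{Descent to \(A\).} Finally, \(A^G\) is integral over the finitely generated image \(B\) of \(S^G\). Also, \(A^G\subset A\), and \(A\) is finitely generated over \(R\). To get finiteness, I would apply the same trick with a filtration rather than a grading, or replace \(A\) by the Rees-type graded algebra \(\bigoplus_d F_dA\), where \(F_d\) is the image of \(\Sym^{\le d}M\). This is graded, \(G\)-stable and finitely generated, so the graded case applies. Its degree-\(d\) invariants surject onto \(F_dA^G\), giving finite generation of \(A^G\).

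\textbf{Main obstacle.} I expect the main obstacle to be the integrality step over \(C\), that is, replacing the Reynolds operator by power reductivity. This requires a careful bookkeeping of powers \(h^{p^k}\) in the presence of non-invariant coefficients.
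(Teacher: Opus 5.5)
The paper gives no proof of this statement; it only cites \cite[Theorem~8]{vdK}. There is a genuine gap in your proposal, located in the graded case.

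Your reductions are sound. Local finiteness of comodules over a noetherian base gives $M$. The Rees algebra $\widetilde A=\bigoplus_d F_dA\,t^d$ (with $t$ invariant) is a finitely generated graded $G$-algebra whose invariants surject onto $A^G$ under $t\mapsto 1$. So it suffices to treat finitely generated graded $A$ with $A_0$ a quotient of $R$, not just $\Sym_R M$.

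Your integrality step does not identify the power-surjective map to which (2) is applied, and the correct use of power reductivity does not give what you claim. Apply the definition to $P\to R$, $(p,r)\mapsto r$, where $P=\{(p,r)\in\bigoplus_i S_{d-e_i}\oplus R : \sum_i f_i p_i=rh\}$, $e_i=\deg f_i$ and $d=\deg h$. Then map $(p,r)\mapsto\sum_i p_i\varepsilon_i\in S[\varepsilon_1,\dots,\varepsilon_n]$ with invariant variables $\varepsilon_i$, and substitute $\varepsilon_i\mapsto f_i$. This gives $h^m=\sum_{|\alpha|=m}c_\alpha f^\alpha$ with $c_\alpha\in S^G$ of degree $md-\alpha\cdot e$. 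That degree is in general not below $d$, so your induction on degree does not close. All you obtain is that each homogeneous element of $S^G_+$ has a power in $C_+S^G$. Your finiteness step is also false unless finite generation is already known: a graded algebra with nil positive part and finite graded pieces need not live in bounded degrees.

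Both failures occur for $R=k$, $S=k[x,y]$, $B=k+x\,k[x,y]$, $C=k[x]$. Here $B_+S=xS=C_+S$, every homogeneous $h\in B_+$ has $h^2\in C_+B$, and the graded pieces are finite-dimensional. Yet $B/C_+B$ contains the nonzero class of $xy^j$ in degree $j+1$ for every $j\geq 1$, $xy$ is not integral over $C$, and $B$ is not finitely generated. So no argument using only $J$, its invariant generators, and condition (3) of Proposition~\ref{prop:power-reductivity} for the single surjection $S\to S/J$ can work.

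The missing idea is to apply (3) to many quotients, which is what Nagata's noetherian induction does.
\begin{enumerate}
\item Suppose $A^G$ is not finitely generated. Choose a $G$-stable homogeneous ideal $I$ maximal with $(A/I)^G$ not finitely generated, and replace $A$ by $A/I$.
\item Take a nonzero homogeneous $f\in A^G$ of positive degree; if none exists, $A^G=A_0$ is finitely generated.
\item By maximality of $I$, $(A/fA)^G$ is finitely generated. By (3) and the Artin--Tate lemma, the image of $A^G$ in it is finitely generated.
\item If $f$ is a non-zero-divisor, this image is $A^G/fA^G$, and graded Nakayama shows $A^G$ is finitely generated.
\item Otherwise $\operatorname{Ann}(f)\neq 0$, so $(A/\operatorname{Ann}(f))^G$ is finitely generated and, by (3), module-finite over the image of $A^G$.
\item Consider the exact sequence $0\to(A/\operatorname{Ann}(f))^G\xrightarrow{\cdot f}A^G\to(A/fA)^G$. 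It shows that $K=fA\cap A^G$ is a finitely generated $A^G$-module with generators of positive degree, while $A^G/K$ is finitely generated.
\item By induction on degree, $A^G$ is finitely generated, a contradiction.
\end{enumerate}
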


\begin{theorem}[{\cite[Theorem~17]{vdK}}]
	\label{thm:reductive-implies-power-reductive}	
	Let \(R\) be a commutative ring and let \(G\) be a reductive \(R\)-group scheme, i.e., a smooth affine \(R\)-group scheme whose geometric fibers are connected reductive algebraic groups.
	Then \(G\) is power reductive.
\end{theorem}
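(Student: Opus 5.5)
The plan is to reduce to the case of a noetherian local base, where the closed fibre is handled by Haboush's theorem and the invariant is then lifted to the whole base. Start with data \(G\) over \(R\) and a surjective \(G\)-map \(\varphi\colon M\twoheadrightarrow R\). Pick \(m\in M\) with \(\varphi(m)=1\). Since \(G\) is flat and affine, every comodule is locally finite. So \(m\) lies in a finitely generated \(G\)-stable submodule \(M'\), and it suffices to treat \(M'\). We may therefore assume \(M\) is finitely generated.

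\emph{Reduction to a noetherian base.} By noetherian approximation, \(G\), \(M\), the coaction and \(m\) all descend to a finitely generated \(\mathbb Z\)-subalgebra \(R_0\subset R\), with \(G_0\) still reductive. An invariant for \(R_0\) base-changes to an invariant over \(R\), so we may assume \(R\) is noetherian. For each \(d\), the image \(I_d:=\Sym^d\varphi\bigl((\Sym^dM)^G\bigr)\subset R\) is an ideal, and \(I_dI_e\subset I_{d+e}\). If \(\sum_d I_d=R\), then finitely many elements \(a_i\in I_{d_i}\) generate \(R\). Raising the relation \(\sum_i c_ia_i=1\) to a suitable power and multiplying suitable products of the \(a_i\) gives an element of \(I_D\) equal to \(1\) for a common \(D\). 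So it suffices to show \(\sum_dI_d\not\subset\mathfrak m\) for every maximal ideal \(\mathfrak m\). Since invariants commute with flat base change, we may replace \(R\) by \(R_{\mathfrak m}\) and then by its completion \(\widehat R\). Faithful flatness then returns the conclusion to \(R_{\mathfrak m}\), because \(1\in I_D\otimes\widehat R\) forces \(1\in I_D\).

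\emph{Closed fibre.} Over \(k=R/\mathfrak m\), the fibre \(G_k\) is reductive. By Haboush's theorem it is geometrically reductive, which over a field is equivalent to power reductivity. This gives \(d\) and \(\bar t\in(\Sym^d M_k)^{G_k}\) with \(\bar t\mapsto1\). Invariance can be checked after passing to \(\bar k\) and descending, since \(\Sym^d\) commutes with base change.

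\emph{Lifting along \(\widehat R\to k\); this is the main obstacle.} Invariants do not commute with non-flat base change, so \(\bar t\) need not lift. I would treat two cases.

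If \(\operatorname{char}k=0\), then \(G_k\) is linearly reductive. I would use \(H^1(G,-)=0\) on the successive thickenings \(R/\mathfrak m^{n}\), together with the Mittag-Leffler property, to lift invariants step by step. This yields an invariant in \(\Sym^dM_{\widehat R}\) lifting \(\bar t\); its image is a unit, so \(I_d\not\subset\mathfrak m\).

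If \(\operatorname{char}k=p>0\), I would use the Frobenius trick. Let \(t\in\Sym^dM\) be invariant modulo an ideal \(N\), i.e. \(\Delta(t)-t\otimes1\in N\cdot(\Sym^dM\otimes\cO_G)\). Then \(t^p\in\Sym^{pd}M\) is invariant modulo \(pN+N^p\subset\mathfrak m N\), since \(p\in\mathfrak m\). Iterating, \(t^{p^n}\) is invariant modulo \(\mathfrak m^{n}N\) and still maps to a unit. This only gives approximate invariants, though, in degrees \(dp^n\) that keep changing. The real work is to turn such an approximate invariant into an exact one in a fixed degree. I would attack this by a second induction. Consider the finitely generated algebra \(A=\Sym M\) and its invariant ring \(A^G\), which is finitely generated by Theorem~\ref{thm:finite-generation-invariants}, \emph{provided} the argument is organized as an induction on \(\dim R\). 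I would then use that \(A^G\to (A\otimes R/\mathfrak m^n)^G\) is power-surjective in the sense of Proposition~\ref{prop:power-reductivity}, over the artinian rings \(R/\mathfrak m^n\). There the Frobenius argument terminates, because \(\mathfrak m^n=0\), so it produces exact invariants. The remaining problem is to pass from all the \(R/\mathfrak m^n\) to \(\widehat R\) by a compactness or Artin–Rees argument on the graded pieces of \((\Sym^{dp^n}M)^G\). I expect this step to be the heart of the proof, as in van der Kallen's argument.
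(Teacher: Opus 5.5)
\textbf{Verdict: there is a genuine gap.} In residue characteristic \(p\) you never produce an invariant over \(\widehat R\), and the tools you suggest for doing so presuppose the theorem.

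\textbf{What works.} Most of your preliminary reductions are sound: the reduction to a noetherian base, the ideal-theoretic reduction to \(\widehat R\) via flat base change of invariants, Haboush on the closed fibre, and the residue characteristic zero case. One repair is needed. A finitely generated \(M\) over a non-noetherian \(R\) need not be finitely presented, so instead of descending \(M\), restrict scalars to a noetherian \(R_0\) over which \(G\) is defined. Then work with a finitely generated \(G_0\)-submodule of \(\varphi^{-1}(R_0)\) containing \(m\); this also sidesteps local finiteness over non-noetherian rings. Your \(p\)-power computation is also correct: it gives exact invariants of degree \(dp^{n-1}\) over each artinian quotient \(\widehat R/\mathfrak m^n\).

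\textbf{The gap.} You never produce an invariant in \(\Sym^D M_{\widehat R}\) with unit image. For a \emph{fixed} degree \(D\), passing from solutions modulo every \(\mathfrak m^n\) to a solution over \(\widehat R\) would be a routine Artin--Rees/Krull argument. This is because the conditions \(\Delta(t)=t\otimes 1\) and \((\Sym^D\varphi)(t)=1\) are linear on the finitely generated module \(\Sym^D M\). What is missing is a single degree that works for all \(n\) at once. The Frobenius trick gives degrees \(dp^{n-1}\to\infty\), and the Artin--Rees constants for \(\Sym^D M\) depend on \(D\). Nothing in the proposal bounds them, so compactness on the individual graded pieces does not close the argument.

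\textbf{Why the proposed fix is circular.} Theorem~\ref{thm:finite-generation-invariants} assumes that \(G\) is already power reductive over the base. Power-surjectivity of \(A^G\to(A/\mathfrak m^nA)^G\) is, by Proposition~\ref{prop:power-reductivity}, a special case of the very conclusion that \(G\) is power reductive over \(\widehat R\). Induction on \(\dim R\) does not help, because both statements concern \(\widehat R\) itself. Already for \(R=\mathbb Z_p\), where your Frobenius argument settles every artinian quotient, the step from all \(R/\mathfrak m^n\) to \(R\) is exactly what is left open.

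\textbf{Comparison with the paper.} The paper gives no proof of this statement; it cites van der Kallen, and the uniform lifting you are missing is precisely the nontrivial content of that result. It is not obtained by a limit argument from the closed fibre. Instead it rests on the Chevalley-group case treated by Franjou--van der Kallen. The general case then follows because \(G\) splits \'etale-locally, and power reductivity descends along faithfully flat extensions by the same flat base change of invariants you already use in your localization step.
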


\subsubsection{Relative GIT}

Let \(S\) be a noetherian scheme and let \(G\) be a reductive
\(S\)-group scheme.
Let \(X\) be a projective \(S\)-scheme 
equipped with a \(G\)-action, and let
\(f\colon X\to S\) be its structure morphism.
Suppose that \(L\) is a \(G\)-linearized \(f\)-ample line
bundle on \(X\).

A point \(x\in X\) is called \emph{relatively \(L\)-semistable} if there
exist an affine open neighbourhood \(T\subset S\) of \(f(x)\), an integer
\(n>0\), and a section $s\in H^0\bigl(X_T,L_T^{\otimes n}\bigr)^{G_T}$
such that \(s(x)\neq0\).

The following relative form of geometric invariant theory generalizes
Seshadri's relative GIT (see
\cite[Theorem~4 and Remark~10]{Seshadri-Geometric-Reductivity}).
Seshadri's construction applies in the present setting by using power reductivity.
Although the theorem is well known to specialists (see \cite[Theorem~1.2]{Langer-Moduli-Lie-algebroids}),
we  sketch its proof for the convenience of the reader.

\begin{theorem}
	In the above setting the following hold.
	\begin{enumerate}
		\item The relatively \(L\)-semistable points form a \(G\)-stable open
		\(S\)-subscheme
		$X^{\mathrm{ss}}_S(L)\subset X.$
		For every geometric point \(\bar s\to S\), one has an equality of
		open subschemes
		\[
		\bigl(X^{\mathrm{ss}}_S(L)\bigr)_{\bar s}
		=
		X_{\bar s}^{\mathrm{ss}}(L_{\bar s}).
		\]
		\item There exist an \(S\)-scheme \(Y\) and a \(G\)-invariant affine
		surjective morphism
		$q\colon X^{\mathrm{ss}}_S(L)\longrightarrow Y$	
		such that
		\[
		\cO_Y=
		\bigl(q_*\cO_{X^{\mathrm{ss}}_S(L)}\bigr)^G.
		\]
		\item There is a projective morphism \(g\colon Y\to S\) such that	$f|_{X^{\mathrm{ss}}_S(L)}=g\circ q.$
	\end{enumerate}
\end{theorem}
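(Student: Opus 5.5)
The plan is to run Seshadri's construction locally on $S$ and glue the local quotients. Since $S$ is noetherian and all the objects involved are compatible with restriction to open subsets, I will first treat an affine open $T=\Spec R\subset S$. Here $X_T$ is projective over $R$, so $A:=\bigoplus_{n\ge0}H^0(X_T,L_T^{\otimes n})$ is a finitely generated graded $R$-algebra with $\Proj A$ recovering $X_T$, at least up to replacing $L$ by a power; this uses $f$-ampleness together with Serre vanishing and finiteness. The $G$-linearization makes $A$ a $G$-algebra. By Theorem~\ref{thm:reductive-implies-power-reductive}, $G_T$ is power reductive, and Theorem~\ref{thm:finite-generation-invariants} then shows that $A^G$ is a finitely generated graded $R$-algebra. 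I set $Y_T:=\Proj A^G$, which is projective over $T$.

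The inclusion $A^G\subset A$ induces a rational map $X_T\dashrightarrow Y_T$. Its domain of definition is the complement of the common zero locus of the homogeneous invariants of positive degree. By definition this is exactly the set of relatively semistable points lying over $T$, so it is open and $G$-stable. On the basic opens $X_{T,s}$, for homogeneous invariants $s$, the map is $\Spec A_{(s)}\to\Spec (A^G)_{(s)}$. Since localization at an invariant commutes with taking invariants (invariants are a kernel, hence commute with the flat base change $A\to A_s$), we get $(A_{(s)})^G=(A^G)_{(s)}$. This shows that $q$ is affine, $G$-invariant, and satisfies $\cO_Y=(q_*\cO)^G$.

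For surjectivity of $q$ I will use Proposition~\ref{prop:power-reductivity}(3). Given a point of $\Spec (A^G)_{(s)}$, consider the surjection $A_{(s)}\to A_{(s)}\otimes\kappa$ together with the corresponding statement for $A^G$. Integrality shows that the fiber ring $(A_{(s)}\otimes\kappa)$ is nonzero, because its invariants are integral over a nonzero ring. Hence the fiber is nonempty.

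Next I glue. For $T'\subset T$ a smaller affine open, I need $H^0(X_{T'},L^n)^G=H^0(X_T,L^n)^G\otimes_R R'$ in large degrees, up to power-surjectivity. Since open immersions are flat, taking invariants commutes with this localization, and $(A_{R'})^G=A^G\otimes R'$ on the nose. Hence the local semistable loci and quotients agree on overlaps, and so $X_S^{ss}(L)$, $Y$, $q$ and $g$ globalize.

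The main obstacle is the fiberwise statement in (1): that $(X^{ss}_S)_{\bar s}=X_{\bar s}^{ss}$. Invariants do not commute with the non-flat base change $R\to k(\bar s)$. The inclusion ``$\subset$'' is clear, since restricting an invariant section gives an invariant section on the fiber. For the reverse inclusion, let $t$ be an invariant of degree $n$ in $A\otimes k(\bar s)$ that is nonzero at $x$. The map $A\to A\otimes_R k(\bar s)$ factors through $A\otimes_R \kappa(s)$, which is a quotient of $A_{\mathfrak p}$, followed by a field extension. For the field extension, flatness gives $(A\otimes\kappa(s))^G\otimes k(\bar s)=(A\otimes k(\bar s))^G$, so $t$ is a linear combination of invariants defined over $\kappa(s)$. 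Consequently some invariant over $\kappa(s)$ is nonzero at $x$. For the surjection $A_{\mathfrak p}\to A\otimes\kappa(s)$, Proposition~\ref{prop:power-reductivity}(2) shows that some power $t^m$ lifts to an invariant of $A_{\mathfrak p}$. Clearing denominators by an element of $R\setminus\mathfrak p$ gives an invariant on a smaller affine neighborhood which is nonzero at $x$. This yields ``$\supset$''.
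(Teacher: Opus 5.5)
Your overall route is the paper's: a local $\Proj$ of the invariant section ring, finite generation via Theorem~\ref{thm:finite-generation-invariants}, $(A_{(s)})^G=(A^G)_{(s)}$ and flat base change for gluing, and for (1) the same descent-and-lift argument as in Lemma~\ref{semistability}. However, your surjectivity step is circular. Put $B:=A_{(s)}$, fix $\mathfrak q\in\Spec B^G$, and localize at $B^G\setminus\mathfrak q$, so that $B^G$ is local with maximal ideal $\mathfrak q$ (without this, $B\to B\otimes_{B^G}\kappa(\mathfrak q)$ is not even surjective). Proposition~\ref{prop:power-reductivity}(3), applied to $B\twoheadrightarrow C:=B/\mathfrak qB$, says only that $C^G$ is integral over the image of $B^G$, i.e.\ over $B^G/(\mathfrak qB\cap B^G)$. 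That ring is nonzero exactly when $C\neq0$. If $\mathfrak qB=B$, then $C=0$ and the integrality statement holds vacuously. So ``integral over a nonzero ring'' assumes what is to be proved. The missing ingredient is the lying-over property $\mathfrak qB\neq B$ (more generally $JB\cap B^G\subseteq\sqrt J$ for ideals $J\subseteq B^G$), and this is where power reductivity is genuinely needed.

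Here is one way to supply it. Suppose $1=\sum_{i=1}^n q_ib_i$ with $q_i\in\mathfrak q$ and $b_i\in B$. Replace $R$ by $R/\operatorname{Ann}_R(B)$; over it $G$ is still reductive, hence power reductive by Theorem~\ref{thm:reductive-implies-power-reductive}, and now $R\cdot1\simeq R$ inside $B$. Since the $q_i$ are invariant, $\mu\colon B^{\oplus n}\to B$, $c\mapsto\sum_i q_ic_i$, is $G$-equivariant. So $M:=\mu^{-1}(R\cdot1)$ is a $G$-module surjecting onto $R$. Power reductivity gives $d>0$ and $t\in(\Sym^dM)^G$ with $(\Sym^d\mu)(t)=1$. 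Consider the equivariant algebra map $\Sym(B^{\oplus n})\to B[T_1,\dots,T_n]$, $c\mapsto\sum_ic_iT_i$. The image of $t$ is a form $P$ of degree $d$ with coefficients in $B^G$, and $P(q_1,\dots,q_n)=1$. Hence $1\in\mathfrak q$, a contradiction. Any maximal ideal of $B$ containing $\mathfrak qB$ then lies over $\mathfrak q$.

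A smaller omission is in (1). Semistability on $X_{\bar s}$ is tested by invariants in $H^0(X_{\bar s},L_{\bar s}^{n})$, not in $A_n\otimes_Rk(\bar s)$. The base change map between them is in general an isomorphism only for $n\gg0$, by relative Serre vanishing, which is why the paper invokes it. So you must first replace the given invariant section by a power of large degree before lifting it.
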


\begin{proof}
	The construction is local on \(S\). Over an affine open
	\(\Spec R\subseteq S\), after replacing \(L\) by a positive power,
	which does not change the semistable locus, one writes
	\(X=\Proj_R A\) for the corresponding finitely generated
	graded \(G\)-algebra \(A\). Power reductivity gives finite generation
	of \(A^G\) and power-surjectivity for invariant sections. The affine
	quotients of the opens \(D_+(a)\), where \(a\in A^G\) is homogeneous
	of positive degree, therefore glue to
	\[
	q\colon X^{\mathrm{ss}}_S(L)\longrightarrow\Proj_R A^G.
	\]
	Power-surjectivity gives the surjectivity of \(q\), and, together
	with relative Serre vanishing, the asserted description of the
	semistable loci on geometric fibers. Finally, a suitable Veronese
	subalgebra of \(A^G\) is generated in degree one, which proves
	projectivity over \(R\). Compatibility with localization on \(R\)
	allows these local constructions to glue over \(S\).
\end{proof}

We call \(q\) the relative GIT quotient and write
$X^{\mathrm{ss}}_S(L)\sslash G$ for \(Y\) appearing in the theorem.
By \cite[Theorem~4 and Remark~9]{Seshadri-Geometric-Reductivity},
\(q\) is a uniform categorical quotient, but in general it is not a
universal categorical quotient.

If $E$ is a $G$-linearized vector bundle on $S$ then 
\(L:=\mathcal O_{\PP_S(E)}(1)\) has a natural \(G\)-linearization and the relative GIT quotient of $X:=\PP_S(E)\to S$ is constructed as
\[
q\colon X^{\mathrm{ss}}_S(L)\longrightarrow X^{\mathrm{ss}}_S(L)\sslash G:= \Proj_S\bigl((\Sym_{\cO_S}E)^G\bigr).
\]

\subsection{Fundamental theorems for the symplectic group}\label{subsection:DeCOncini-Procesi}

Let $R$ be a commutative ring and let $U$ be a free $R$-module of rank $2s$.
Let $\sigma:U\times U\to R$ be a split perfect alternating form and let
$G=\Sp(U,\sigma)$ be the corresponding symplectic group scheme over $R$.
Thus, the homomorphism
\[
\sigma^\flat:U\longrightarrow U^*,
\qquad
u\longmapsto \sigma(u,-),
\]
is an isomorphism, and $U$ admits a symplectic basis. All invariants below
are understood as group-scheme invariants.
For a finite free $R$-module $M$, we write $R[M]:=\Sym_R(M^*)$
for the coordinate ring of the associated affine $R$-scheme $\AA _R(M)$.

Let $V$ be a free $R$-module of rank $m$ and consider the natural
$G$-action on $V\otimes_R U$, where $G$ acts trivially on $V$ and through
its standard representation on $U$. Choose a basis $v_1,\ldots,v_m$ of
$V$. For every $R$-algebra $A$, every
$x\in (V\otimes_R U)_A$ can be written as
$x=v_1\otimes x_1+\cdots+v_m\otimes x_m$
for uniquely determined $x_i\in U_A$. For $1\leq i<j\leq m$, we define
a quadratic polynomial $q_{ij}\in R[V\otimes_R U]$ by
\[
q_{ij}(x):=\sigma_A(x_i,x_j).
\]
These polynomials are invariant under $G$. Let us recall the following
first and second fundamental theorems for the symplectic group, due to
C. De Concini and C. Procesi:

\begin{theorem}[{\cite[Theorems 6.6 and 6.7]{DeConcini-Procesi};
		see also \cite[Theorem 5.1]{Hashimoto}}]\label{DeConcini-Procesi-1}
	The $R$-algebra $R[V\otimes_R U]^G$ is generated by the quadratic
	forms $q_{ij}$. The kernel of the surjective homomorphism
	\[
	R[z_{ij}\mid 1\leq i<j\leq m]
	\longrightarrow R[V\otimes_R U]^G,
	\qquad
	z_{ij}\longrightarrow q_{ij},
	\]
	is generated by the Pfaffians of principal submatrices of size
	$2s+2$ of the alternating matrix $[a_{ij}]$ given by
	\[
	a_{ii}=0,\qquad
	a_{ij}=z_{ij}\ \text{for }i<j,\qquad
	a_{ij}=-z_{ji}\ \text{for }i>j.
	\]
\end{theorem}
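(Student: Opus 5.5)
The plan is to reduce everything to \(R=\mathbb Z\) and then descend along \(\mathbb Z\to R\) by base change. Both sides of the homomorphism
\[
\varphi\colon R[z_{ij}]/\mathrm{Pf}_{2s+2}\longrightarrow R[V\otimes_R U]^G
\]
are graded, by total degree and in fact multigraded by \(\mathbb Z^m\) via the torus of \(\GL(V)\). Each graded piece is a finite module, so it suffices to treat one multidegree at a time. I will show three things:
\begin{enumerate}
\item each graded piece of the source is a free \(\mathbb Z\)-module whose formation commutes with base change;
\item the same holds for the target;
\item \(\varphi_{\mathbb Z}\) is an isomorphism.
\end{enumerate}
Then \(\varphi_R=\varphi_{\mathbb Z}\otimes R\) is an isomorphism for every \(R\). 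Surjectivity of \(\varphi_R\) is exactly the first fundamental theorem, and the identification of the kernel is the second.

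For the source, I would use the straightening law for Pfaffians. The ring \(\mathbb Z[z_{ij}]/\mathrm{Pf}_{2s+2}\) has a \(\mathbb Z\)-basis of standard monomials in the Pfaffians of \(z\), indexed by standard tableaux with columns of even length at most \(2s\). This basis persists after any base change, which gives freeness and base change compatibility for every \(R\).

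For the target, I would use good filtrations. As a \(G\times\GL(V)\)-module, \(\mathbb Z[V\otimes U]=\Sym(V^*\otimes U^*)\) has a Cauchy filtration. Its subquotients are \(\nabla_{\GL(V)}(\lambda)\otimes\nabla_{\GL(U)}(\lambda)\), and each restricted \(\nabla_{\GL(U)}(\lambda)\) has a good \(\Sp\)-filtration by Donkin and Mathieu. Hence \(H^1(G,-)\) vanishes on every graded piece, so taking \(G\)-invariants commutes with base change, and the invariants are free over \(\mathbb Z\). Moreover, their rank in each degree equals the dimension of the invariants over \(\mathbb Q\), and hence over every \(\mathbb F_p\).

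It then suffices to show that \(\varphi_k\) is an isomorphism for \(k=\mathbb Q\) and \(k=\mathbb F_p\). Over \(\mathbb Q\), Weyl's classical first and second fundamental theorems for \(\Sp\) give this, so the source and target have equal rank in each degree. Over \(\mathbb F_p\), dimensions therefore agree degreewise, and it is enough to prove that \(\varphi_{\mathbb F_p}\) is injective. Equivalently, the images of the standard monomials in the \(q_{ij}\) must be linearly independent in \(\mathbb F_p[V\otimes U]\).

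This independence is the main obstacle. I would first try a leading-term argument. Choose a symplectic basis \(e_1,\dots,e_s,f_1,\dots,f_s\) and a suitable term order on the coordinates \(x_{i,a}\). Then each standard product of Pfaffians in the \(q_{ij}\) should have a distinct leading monomial, matching the leading term of the corresponding standard bideterminant on the \(2s\) coordinate columns. These are distinct monomials with coefficient \(\pm1\), so the images are independent over every prime field.

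If that bookkeeping becomes unwieldy, the fallback is to specialize the \(x_i\) to explicit points where the \(q\)-matrix becomes a generic alternating matrix of rank at most \(2s\). Such points give a dominant map onto the Pfaffian variety over \(\mathbb F_p\), and its reducedness then yields injectivity.
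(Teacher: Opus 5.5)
Your outline is correct, but it does not follow the paper, because the paper gives no proof at all. It quotes the statement from De Concini--Procesi \cite{DeConcini-Procesi}, whose argument is a characteristic-free straightening/standard-monomial argument, and from Hashimoto \cite{Hashimoto}. Only afterwards does it infer that $R[V\otimes_R U]^G$ commutes with base change, because both sides are base changes of the same Pfaffian algebra over $\mathbb Z$.

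Your route runs this logic the other way and is in the spirit of good-filtration proofs of these theorems (cf.\ \cite{Hashimoto}). Good filtrations give freeness of the invariants over $\mathbb Z$ and their compatibility with base change. Weyl's characteristic-zero theorems fix the ranks. Only injectivity modulo $p$ is then left. What this buys is a conceptual reason why the answer is independent of the characteristic, and it makes the base-change property an input rather than a consequence. The price is much heavier machinery: the integral Cauchy filtration, Donkin--Mathieu, and Weyl. Meanwhile the Pfaffian straightening law, which is the combinatorial core of De Concini--Procesi, is still needed for the source.

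Three steps should be made explicit.

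First, vanishing of $H^1$ over each field does not by itself give the integral statement. Argue as follows. For $M$ a graded piece, $\dim(M_{\mathbb F_p})^G$ is the multiplicity of $\nabla(0)$ in a good filtration. It is therefore determined by the character and equals $\operatorname{rk}M^G$. Hence in the universal-coefficient sequence $0\to M^G\otimes\mathbb F_p\to(M_{\mathbb F_p})^G\to H^1(G_{\mathbb Z},M)[p]\to 0$ the last term vanishes. Since $H^1(G_{\mathbb Z},M)$ is torsion (it dies after $\otimes\mathbb Q$), it is zero, and then $(M\otimes R)^{G_R}=M^G\otimes R$ for every $R$. You can also bypass the Cauchy filtration. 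As $G$-modules, $\mathbb Z[V\otimes U]\simeq\Sym(U^*)^{\otimes m}$, and $\Sym^a(U_k^*)\simeq\nabla(a\omega_1)$ because $\Sp$ acts transitively on $\PP(U)$. The tensor product theorem then suffices.

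Second, say why isomorphisms over $\mathbb Q$ and over all $\mathbb F_p$ give one over $\mathbb Z$: each graded piece is free of the same finite rank on both sides, so the determinant is a unit.

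Third, your leading-term argument works, so the fallback is unnecessary. Let $X$ be the $m\times 2s$ coordinate matrix in the ordered basis $e_1,f_1,\dots,e_s,f_s$. Then $\mathrm{Pf}_I(XJX^{T})=\sum_K\det(X_{I,K})\,\mathrm{Pf}(J_K)$. Use the lexicographic order with $x_{i,a}>x_{i',a'}$ when $i<i'$, or when $i=i'$ and $a$ precedes $a'$. The leading monomial is then $\prod_r x_{i_r,c_r}$ with $(c_1,c_2,\dots)=(e_1,f_1,e_2,f_2,\dots)$ and coefficient $1$. For a standard product, the variables in column $c_r$ record the multiset of $r$-th row entries, and standardness recovers the tableau from this. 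Since the leading coefficients are units, this gives injectivity of $\varphi_R$ over every ring $R$ at once, not just over $\mathbb F_p$.
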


The theorem is valid without  assuming that $2$ is invertible in
$R$. In particular, it also holds in characteristic $2$, where
``alternating'' includes the condition that all diagonal entries vanish.

Moreover, the above presentation is compatible with arbitrary base
change (even though formation of invariants does not commute with
arbitrary base change in general). This means that for every $R$-algebra $R'$, the canonical map
\[
R[V\otimes_R U]^G\otimes_R R'
\longrightarrow
R'[V_{R'}\otimes_{R'}U_{R'}]^{G_{R'}}
\]
is an isomorphism, where
$V_{R'}=V\otimes_R R'$, and similarly for $U_{R'}$ and $G_{R'}$.
Indeed, on both sides Theorem \ref{DeConcini-Procesi-1} identifies this
map with the base change of the same Pfaffian algebra over $\mathbb Z$.

\begin{corollary}\label{DeConcini-Procesi-3}
	Assume that $\operatorname{rk}_R V=m\leq 2s+1$. Let
	$
	\mu\colon	\mathbb A_R(V\otimes_RU)
	\longrightarrow
	\mathbb A_R\bigl(\textstyle\bigwedge_R^2V\bigr)$
	be the \(G\)-invariant morphism whose value on \(A\)-points is given by
	\[
	\mu(x)=
	\sum_{1\leq i<j\leq m}q_{ij}(x)\,v_i\wedge v_j.
	\]
	Equivalently, regarding $x$ as a homomorphism $V^*\to U$, the element
	$\mu(x)$ is the alternating form on $V^*$ obtained by pulling back
	$\sigma$. In particular, the morphism $\mu$ is independent of the
	chosen basis of $V$.
	Then the induced map
	\[
	\AA_R(V\otimes_R U)\sslash G
	:=
	\Spec\bigl(R[V\otimes_R U]^G\bigr)
	\longrightarrow \AA_R(
	\bigwedge\nolimits_R^2V)
	\]
	is an isomorphism of affine $R$-schemes.
\end{corollary}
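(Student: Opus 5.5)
The plan is to check the claimed isomorphism on coordinate rings and to deduce it from Theorem~\ref{DeConcini-Procesi-1}. First, basis-independence: for an \(R\)-algebra \(A\) and \(x\in (V\otimes_RU)_A\), view \(x\) as \(\xi\colon V_A^*\to U_A\) with \(\xi(v_i^*)=x_i\). The pulled-back alternating form \(\sigma_A(\xi(-),\xi(-))\) on \(V_A^*\) is an element of \(\bigwedge^2_A V_A\), canonically identified with alternating forms on \(V_A^*\) because \(V\) is free. Its value on \((v_i^*,v_j^*)\) is \(\sigma_A(x_i,x_j)=q_{ij}(x)\), so it equals \(\sum_{i<j}q_{ij}(x)\,v_i\wedge v_j\). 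This expression is functorial in \(A\) and makes no reference to the basis. Since every \(q_{ij}\) is \(G\)-invariant, \(\mu\) is \(G\)-invariant and therefore factors through \(\Spec(R[V\otimes_RU]^G)\).

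Next, coordinates. Let \(z_{ij}\), \(i<j\), be the coordinates on \(\AA_R(\bigwedge^2_RV)\) dual to the basis \(v_i\wedge v_j\). Then \(R[\bigwedge^2_R V]=R[z_{ij}\mid 1\le i<j\le m]\) is a polynomial ring, and the comorphism of \(\mu\) sends \(z_{ij}\mapsto q_{ij}\). Hence the induced map of affine schemes corresponds to exactly the \(R\)-algebra homomorphism
\[
R[z_{ij}\mid 1\le i<j\le m]\longrightarrow R[V\otimes_RU]^G,\qquad z_{ij}\longmapsto q_{ij}.
\]
By the first fundamental theorem in Theorem~\ref{DeConcini-Procesi-1}, this homomorphism is surjective. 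By the second fundamental theorem, its kernel is generated by the Pfaffians of the principal \((2s+2)\times(2s+2)\) submatrices of the alternating matrix \([a_{ij}]\).

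The only real point is that this kernel vanishes, and this is exactly where the hypothesis \(m\le 2s+1\) enters. The matrix \([a_{ij}]\) is \(m\times m\). Since \(m<2s+2\), it has no principal submatrices of size \(2s+2\), so the set of generators is empty and the kernel is zero. The homomorphism is therefore an isomorphism of \(R\)-algebras, and taking \(\Spec\) gives the isomorphism \(\AA_R(V\otimes_RU)\sslash G\to\AA_R(\bigwedge^2_RV)\) of affine \(R\)-schemes.

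I do not expect a genuine obstacle. The one thing to get right is the bookkeeping of dualities: \(R[M]=\Sym_R(M^*)\), and the coordinate \(z_{ij}\) must pull back to \(q_{ij}\) rather than to some sign or scalar multiple of it. Even a unit multiple would still give an isomorphism, so the argument would survive. Nothing here requires \(2\) to be invertible, because Theorem~\ref{DeConcini-Procesi-1} holds over arbitrary \(R\).
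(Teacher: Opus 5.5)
Your proposal is correct and is the argument the paper intends. The paper states the corollary without a separate proof because it follows immediately from Theorem~\ref{DeConcini-Procesi-1}: the comorphism is $z_{ij}\mapsto q_{ij}$, which is surjective by the first fundamental theorem, and its kernel is generated by Pfaffians of principal $(2s+2)\times(2s+2)$ submatrices of an $m\times m$ matrix, so it is zero when $m\le 2s+1$.
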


Let us choose a symplectic basis
$e_1,\ldots,e_s,f_1,\ldots,f_s$ of $U$, i.e., such that
\[
\sigma(e_i,e_j)=\sigma(f_i,f_j)=0,
\qquad
\sigma(e_i,f_j)=\delta_{ij},
\]
and consider the map
\[
\varphi:
\Sym_R\bigl(\textstyle\bigwedge_R^2V\bigr)
\longrightarrow
\bigl(\Sym_R(V\otimes_R U)\bigr)^G
\]
given on generators by
\[
v\wedge w\longmapsto
\sum_{i=1}^{s}
\bigl(
(v\otimes e_i)(w\otimes f_i)
-
(v\otimes f_i)(w\otimes e_i)
\bigr).
\]
If we place $\bigwedge_R^2V$ in degree $2$ and $V\otimes_R U$ in
degree $1$, then $\varphi$ becomes a homomorphism of graded
$R$-algebras.

Let us recall that there is a natural $G$-equivariant map
\[
\psi:
\bigwedge\nolimits_R^2V\otimes_R\bigwedge\nolimits_R^2U
\longrightarrow
\Sym_R^2(V\otimes_R U)
\]
given by
\[
(v\wedge w)\otimes(u\wedge z)
\longmapsto
(v\otimes u)(w\otimes z)
-
(v\otimes z)(w\otimes u).
\]
The tensor
\[
\pi_\sigma:=
\sum_{i=1}^{s}e_i\wedge f_i
\in\bigwedge\nolimits_R^2U
\]
is independent of the choice of symplectic basis and it is
$G$-invariant. Indeed, if $\sigma$ is regarded as an element of
$\bigwedge_R^2U^*$, then
$\bigl(\textstyle\bigwedge^2\sigma^\flat\bigr)(\pi_\sigma)=\sigma.$
Thus, applying $\psi$ to
\[
(v\wedge w)\otimes\pi_\sigma
=
\sum_{i=1}^{s}(v\wedge w)\otimes(e_i\wedge f_i),
\]
we recover the map
\[
\bigwedge\nolimits_R^2V
\longrightarrow
\Sym_R^2(V\otimes_R U)^G
\]
inducing $\varphi$.

Applying Corollary \ref{DeConcini-Procesi-3} with $V^*$ in place of $V$,
and using the canonical identifications
\[
(V^*)^*\simeq V,
\qquad
\bigl(\textstyle\bigwedge_R^2V^*\bigr)^*
\simeq
\bigwedge\nolimits_R^2V,
\]
together with the $G$-equivariant isomorphism
\[
U\xrightarrow{\sim}U^*,
\qquad
u\longmapsto\sigma(u,-),
\]
gives the following equivalent formulation of Corollary
\ref{DeConcini-Procesi-3}.

\begin{corollary}\label{DeConcini-Procesi-2}
	Assume that $\operatorname{rk}_R V\leq 2s+1$. Then $\varphi$ is an
	isomorphism of graded $R$-algebras. In particular, for every $n\geq0$
	the induced maps
	\[
	\Sym_R^n\bigl(\textstyle\bigwedge_R^2V\bigr)
	\longrightarrow
	\bigl(\Sym_R^{2n}(V\otimes_R U)\bigr)^G
	\]
	are isomorphisms and
	\[
	\bigl(\Sym_R^{2n+1}(V\otimes_R U)\bigr)^G=0.
	\]
\end{corollary}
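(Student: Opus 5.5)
The plan is to deduce Corollary~\ref{DeConcini-Procesi-2} from Corollary~\ref{DeConcini-Procesi-3}, applied with \(V^*\) in place of \(V\), by identifying coordinate rings. Since \(\operatorname{rk}_R V^*=\operatorname{rk}_R V\le 2s+1\), Corollary~\ref{DeConcini-Procesi-3} says that
\[
\mu^\sharp\colon R\bigl[\textstyle\bigwedge_R^2V^*\bigr]\longrightarrow R[V^*\otimes_R U]^G
\]
is an isomorphism.

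The first step is to rewrite both rings. We have \(R[\bigwedge^2_RV^*]=\Sym_R((\bigwedge^2_RV^*)^*)\simeq\Sym_R(\bigwedge^2_RV)\), using the canonical perfect pairing between \(\bigwedge^2V^*\) and \(\bigwedge^2V\) for free modules. Similarly,
\[
R[V^*\otimes_R U]=\Sym_R(V\otimes_R U^*)\simeq\Sym_R(V\otimes_R U),
\]
where the last isomorphism is induced by \((\sigma^\flat)^{-1}\). This identification is \(G\)-equivariant because \(\sigma^\flat\) is, so it preserves invariants. It also preserves the grading, provided \(\bigwedge^2V\) is placed in degree \(2\), since \(\mu\) is quadratic.

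The main step is to check that, under these identifications, \(\mu^\sharp\) becomes exactly \(\varphi\). It suffices to check this on the generators \(v_i\wedge v_j\), where \(v_i\) is a basis of \(V\) with dual basis \(v_i^*\) of \(V^*\). A point of \(\AA_R(V^*\otimes U)\) is \(x=\sum_k v_k^*\otimes x_k\). By construction, the pullback \(\mu^\sharp(v_i\wedge v_j)\) is the function \(x\mapsto\sigma(x_i,x_j)\).

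Now write \(x_i=\sum_l\bigl(a_{il}e_l+b_{il}f_l\bigr)\). Then
\[
\sigma(x_i,x_j)=\sum_l\bigl(a_{il}b_{jl}-b_{il}a_{jl}\bigr).
\]
The coordinate \(a_{il}\) is the linear form \(v_i\otimes e_l^*\), where \(e_l^*\) is the dual-basis element, i.e.\ \(e_l^*(e_k)=\delta_{lk}\) and \(e_l^*(f_k)=0\). Under \(\sigma^\flat\) one has \(\sigma(e_l,-)=f_l^*\) and \(\sigma(f_l,-)=-e_l^*\). Hence \((\sigma^\flat)^{-1}(e_l^*)=-f_l\) and \((\sigma^\flat)^{-1}(f_l^*)=e_l\).

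Substituting, \(a_{il}\mapsto -(v_i\otimes f_l)\) and \(b_{il}\mapsto v_i\otimes e_l\), so
\[
\sigma(x_i,x_j)\longmapsto\sum_l\bigl(-(v_i\otimes f_l)(v_j\otimes e_l)+(v_i\otimes e_l)(v_j\otimes f_l)\bigr),
\]
which is \(\varphi(v_i\wedge v_j)\). I expect this bookkeeping, namely getting the signs and dualities right so the result matches \(\varphi\) exactly and not merely up to an automorphism, to be the only real obstacle. Any sign discrepancy would only compose \(\varphi\) with a graded automorphism (for example \(-1\) on \(\bigwedge^2V\)), which would not affect bijectivity in any case.

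It follows that \(\varphi\) is a graded isomorphism onto \((\Sym_R(V\otimes_RU))^G\). Comparing degree-\(2n\) pieces gives the isomorphisms \(\Sym^n_R(\bigwedge^2_RV)\simeq(\Sym^{2n}_R(V\otimes_RU))^G\). Since \(\Sym_R(\bigwedge^2_RV)\) is concentrated in even degrees, the odd-degree invariants \((\Sym^{2n+1}_R(V\otimes_RU))^G\) vanish. The invariants of the graded \(G\)-module \(\Sym_R(V\otimes_RU)\) decompose degreewise, because the \(G\)-action preserves the grading (it commutes with the \(\mathbb G_m\)-scaling).
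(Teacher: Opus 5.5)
Your proposal is correct and follows the paper's route: apply Corollary~\ref{DeConcini-Procesi-3} to $V^*$, then identify $(\bigwedge^2_R V^*)^*\simeq\bigwedge^2_R V$ and $R[V^*\otimes_R U]\simeq\Sym_R(V\otimes_R U)$ via the $G$-equivariant isomorphism $(\sigma^\flat)^{-1}$. Your explicit sign bookkeeping, which the paper leaves implicit, is right and shows that $\mu^\sharp$ corresponds to $\varphi$ exactly. The degreewise splitting of invariants then gives the even-degree isomorphisms and the vanishing in odd degrees.
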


The construction of \(\varphi\) is functorial in \(V\). Moreover, the
resulting identification of the invariant algebra is compatible with
arbitrary base change. The latter assertion is nontrivial, since, as remarked before,  the
formation of invariants does not, in general, commute with arbitrary
base change; see Subsection~\ref{subsection:relativeGIT}.
Consequently, if $X$ is an $R$-scheme, and $E$ is a
locally free $\mathcal O_X$-module of rank at most $2s+1$,
then the local isomorphisms of Corollary
\ref{DeConcini-Procesi-2} glue to a canonical isomorphism
\[
\Sym_{\mathcal O_X}
\bigl(\textstyle\bigwedge_{\mathcal O_X}^2E\bigr)
\xrightarrow{\ \sim\ }
\left(
\Sym_{\mathcal O_X}
(E\otimes_{R}U)
\right)^{G}, 
\]
where $\bigwedge_{\mathcal O_X}^2E$ is placed in degree $2$.

\subsection{Divided powers and power retractions}
\label{subsection:divided-powers}

Throughout this subsection, \(R\) is a commutative ring and \(W\) is a
finitely generated projective \(R\)-module.

Let $\Gamma(W)=\bigoplus_{d\geq0}\Gamma^dW$
denote the divided-power algebra of \(W\). Its homogeneous component
\(\Gamma^dW\) represents homogeneous polynomial laws of degree \(d\)
from \(W\). The universal homogeneous polynomial law of degree \(d\)
is denoted by
\[
\gamma_d\colon W\longrightarrow\Gamma^dW,
\qquad
w\longmapsto\gamma_d(w).
\]
Thus, for every \(R\)-module \(V\), composition with \(\gamma_d\)
induces a natural bijection between \(R\)-linear maps
\(\Gamma^dW\longrightarrow V\)
and homogeneous polynomial laws \(W\to V\) of degree \(d\).

Since \(W\) is finitely generated projective, there are natural
identifications
\[
\Gamma^dW
\simeq
\bigl(\Sym_R^d(W^*)\bigr)^*
\simeq
\bigl(W^{\otimes_R d}\bigr)^{\mathfrak S_d},
\]
under the second of which \(\gamma_d(w)\) corresponds to \(w^{\otimes d}\).

Formation of \(\Gamma^dW\), as well as the universal law \(\gamma_d\),
commutes with arbitrary base change. Consequently, if \(E\) is a vector
bundle on a scheme \(X\), the corresponding local constructions glue to
a vector bundle \(\Gamma^dE\), and there is a natural identification
\[
\Gamma^dE
\simeq
\bigl(\Sym_{\mathcal O_X}^d(E^*)\bigr)^*.
\]

If \(G\) is a group scheme over \(R\) and \(W\) is a \(G\)-module, then
functoriality gives \(\Gamma^dW\) a natural \(G\)-module structure; see
\cite[Part~I, 2.7 and 2.8]{Jantzen}.

\medskip

The following lemma plays a crucial role in the proof of
Theorem~\ref{thm:divided-powers-ample}.

\begin{lemma}
	\label{lem:power-retraction}
	Let \(G\) be a reductive group scheme over \(R\), and let
	\(U\subset W\) be a \(G\)-submodule. Assume that \(U\) and \(W/U\)
	are finitely generated projective \(R\)-modules and that \(U\) has
	positive rank at every point of \(\Spec R\).
	Then there exist an integer \(d>0\) and a \(G\)-equivariant
	\(R\)-linear map
	\[
	\vartheta\colon
	\Gamma^dW\longrightarrow\Sym_R^dU
	\]
	such that, for every \(R\)-algebra \(A\),
	\[
	\vartheta_A\bigl(\gamma_{d,A}(u)\bigr)=u^d
	\qquad\text{for every }u\in U_A.
	\]
	Here
	\(\gamma_{d,A}\) denotes the base change of \(\gamma_d\), and
	\(\vartheta_A\) denotes the base change of \(\vartheta\).
\end{lemma}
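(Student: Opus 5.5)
The plan is to obtain \(\vartheta\) from power reductivity (Theorem~\ref{thm:reductive-implies-power-reductive}), applied to a suitable \(G\)-module of linear maps \(W\to U\) that restrict to scalars on \(U\). Consider the \(G\)-module \(\Hom_R(W,U)\) with its natural action by conjugation, and the restriction map
\[
\rho\colon \Hom_R(W,U)\longrightarrow \End_R(U),\qquad f\longmapsto f|_U .
\]
Since \(W/U\) is projective, \(U\) is a direct summand of \(W\), so \(\rho\) is surjective. Since \(U\) is projective of positive rank everywhere, the map \(R\to\End_R(U)\), \(a\mapsto a\cdot\id_U\), is injective with image a direct summand. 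To see this, work locally where \(U\) is free of rank \(\ge1\); there \(\id_U\) can be taken as part of a basis of \(\End_R(U)\).

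Define \(M:=\rho^{-1}(R\cdot\id_U)\subset\Hom_R(W,U)\). This is a \(G\)-submodule, because \(\rho\) is equivariant and \(\id_U\) is \(G\)-invariant. Let \(\varphi\colon M\to R\) send \(f\) to the unique scalar \(a\) with \(f|_U=a\,\id_U\). Then \(\varphi\) is \(G\)-equivariant, with trivial action on \(R\), and it is surjective because \(\rho\) is. Power reductivity gives an integer \(d>0\) and an element \(t\in(\Sym^d_RM)^G\) with \((\Sym^d_R\varphi)(t)=1\). If the definition of power reductivity being used needs \(M\) to be finitely generated projective, this holds: \(M\) is the preimage of a direct summand under a split surjection of finitely generated projective modules.

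Next we construct a \(G\)-equivariant linear map
\[
\Phi\colon \Sym^d_R\Hom_R(W,U)\longrightarrow \Hom_R\bigl(\Gamma^dW,\Sym^d_RU\bigr).
\]
It sends \(f_1\cdots f_d\) to the linear map corresponding, by the universal property of \(\gamma_d\), to the homogeneous degree-\(d\) polynomial law \(w\mapsto f_1(w)\cdots f_d(w)\). This law is symmetric in the \(f_i\) and multilinear in them, so \(\Phi\) is well defined. It is compatible with base change and with the \(G\)-actions, since everything is functorial. We then set \(\vartheta:=\Phi(t|)\), precisely the image of \(t\) under \(\Sym^dM\to\Sym^d\Hom_R(W,U)\xrightarrow{\Phi}\Hom(\Gamma^dW,\Sym^dU)\). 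This \(\vartheta\) is \(G\)-equivariant because \(t\) is invariant.

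To check the defining property, take an \(R\)-algebra \(A\) and \(u\in U_A\). For \(f_1,\dots,f_d\in M\) we have \(f_i(u)=\varphi(f_i)u\), hence
\[
\Phi(f_1\cdots f_d)_A\bigl(\gamma_{d,A}(u)\bigr)=\prod_i\varphi(f_i)\,u^d=(\Sym^d\varphi)(f_1\cdots f_d)\,u^d .
\]
By linearity, \(\vartheta_A(\gamma_{d,A}(u))=(\Sym^d\varphi)(t)\,u^d=u^d\).

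I expect the main obstacle to be the bookkeeping in two places. The first is the identification of \(R\cdot\id_U\) as a direct summand isomorphic to \(R\), which needs the positive rank hypothesis and makes \(\varphi\) well defined and surjective. The second is verifying that \(\Phi\) really is given by a polynomial law compatible with all base changes, so that the identity holds for \(A\)-points and not merely for \(R\)-points.
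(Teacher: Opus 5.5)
Your proposal is correct and follows the paper's proof essentially verbatim. Your \(M=\rho^{-1}(R\cdot\id_U)\) and \(\varphi\) are the paper's \(M_0\) and \(\ell\), and you likewise take a power-reductivity invariant \(t\in(\Sym^d_RM)^G\) with \((\Sym^d\varphi)(t)=1\) and push it through \(\Sym^d_R\Hom_R(W,U)\to\Hom_R\bigl(\Gamma^dW,\Sym^d_RU\bigr)\) to obtain \(\vartheta\).
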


\begin{proof}
	Let $	M:=\Hom_R(W,U)$	with the conjugation action	$(g\cdot\varphi)(w)=	g\varphi(g^{-1}w).$
	Set
	$M_0	:=
	\left\{
	\varphi\in M
	\;\middle|\;
	\varphi|_U\in R\,\id_U
	\right\}.$
	This is a \(G\)-submodule of \(M\). Since \(U\) has positive rank at
	every point of \(\Spec R\), the homomorphism
	\[
	R\longrightarrow\End_R(U),
	\qquad
	a\longmapsto a\,\id_U,
	\]
	is injective. Consequently, there is a uniquely determined
	\(R\)-linear map	$\ell\colon M_0\longrightarrow R$
	such that
	\[
	\varphi|_U=\ell(\varphi)\id_U
	\qquad\text{for every }\varphi\in M_0.
	\]
	The map \(\ell\) is \(G\)-equivariant, where \(G\) acts trivially
	on \(R\).
	
	Since \(W/U\) is projective, there exists an
	\(R\)-linear map	$\rho\colon W\longrightarrow U$ such that
 \(\rho|_U=\id_U\). Hence \(\rho\in M_0\) and \(\ell(\rho)=1\).
	Moreover, restriction to \(U\) gives a split exact sequence
	\[
	0\longrightarrow\Hom_R(W/U,U)
	\longrightarrow M_0
	\xrightarrow{\ \ell\ }R
	\longrightarrow0,
	\]
	where a splitting is given by \(a\mapsto a\rho\). Equivalently,
	\[
	M_0
	\simeq
	\Hom_R(W/U,U)\oplus R\rho
	\]
	as \(R\)-modules. In particular, \(M_0\) is finitely generated
	projective, and its formation commutes with arbitrary base change.
	
	By Theorem \ref{thm:reductive-implies-power-reductive}, there exist an
	integer $d>0$ and an element
	$t\in \bigl(\operatorname{Sym}_R^dM_0\bigr)^G$
	such that
	$	(\operatorname{Sym}_R^d\ell)(t)=1.$
	There is a canonical $G$-equivariant $R$-linear map
	\[\Sym_R^dM=
	\operatorname{Sym}_R^d\operatorname{Hom}_R(W,U)
	\longrightarrow
	\operatorname{Hom}_R\bigl(\Gamma^dW,\operatorname{Sym}_R^dU\bigr),
	\]
	which sends $\varphi_1\cdots\varphi_d$ to the linearization, through the
	universal property of $\Gamma^dW$, of the homogeneous polynomial law
	$w\longmapsto \varphi_1(w)\cdots\varphi_d(w).$
	The inclusion $M_0\subset M$ induces a homomorphism
	\[
	\operatorname{Sym}_R^dM_0
	\longrightarrow
	\operatorname{Sym}_R^dM.
	\]
	Let $\vartheta$ be the image of $t$ under the composition of these
	two maps. Since $t$ is $G$-invariant,
	$\vartheta$ is $G$-equivariant. For every $R$-algebra $A$, every
	$u\in U_A$, and every $\varphi\in (M_0)_A$, we have
	$\varphi(u)=\ell_A(\varphi)u.$
	Therefore
	\[
	\vartheta_A\bigl(\gamma_{d,A}(u)\bigr)
	=(\operatorname{Sym}_A^d\ell_A)(t_A)u^d
	=u^d.
	\]
\end{proof}

\subsection{Noetherian approximation}
\label{subsection:noetherian-approximation}

We will need the following noetherian approximation statement to remove the noetherian and finite type hypotheses from Corollaries~\ref{ample-tensor-product} and \ref{representations-of-ample-1}  (see Corollary \ref{representations-of-ample}).

\begin{lemma}\label{approximation-of-ample-vector-bundles}
	Let	$X\simeq\varprojlim_{\lambda\in\Lambda}X_\lambda$
	be an inverse limit of quasi-compact and quasi-separated schemes with
	affine transition morphisms, and let
	\(g_\lambda\colon X\to X_\lambda\) be the canonical projections.
	Then:
	\begin{enumerate}
		\item Every vector bundle \(E\) on \(X\) is isomorphic to
		\(g_\lambda^*E_\lambda\) for some \(\lambda\) and some vector bundle
		\(E_\lambda\) on \(X_\lambda\).
		\item If \(E\) is ample, then	\(E_\lambda\) can be chosen ample.
		\item The pullback to \(X\) of an ample vector bundle on some \(X_\lambda\) is ample.
	\end{enumerate}
\end{lemma}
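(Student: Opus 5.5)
The plan is to reduce all three assertions to the standard limit formalism of EGA~IV~\S8 (Stacks, Tag~01ZM ff.), together with the fact that ampleness of an invertible sheaf descends to some stage of a limit (Stacks, Tag~0B8Y / EGA~IV~8.10.5.2).

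For (1), a vector bundle is a finitely presented quasi-coherent module that is locally free. Because \(X\) is quasi-compact and quasi-separated, \(E\) descends: \(E\simeq g_\lambda^*E_\lambda\) for some finitely presented \(E_\lambda\) on \(X_\lambda\) (Stacks, Tag~01ZR). Local freeness of finitely presented modules also descends after enlarging \(\lambda\) (Tag~01ZR/0B8W, EGA~IV~8.5.5). The positive-rank condition on connected components also needs attention. The support of \(E_\lambda\) is closed and constructible and contains the image of \(X\). One might fear that \(X_\lambda\) has components where \(E_\lambda\) has rank zero. In that case the open and closed subset \(Z_\lambda\) where \(E_\lambda\) has rank \(0\) has empty preimage in \(X\). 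Since \(X=\varprojlim X_\lambda\) with affine transitions, the preimages of \(Z_\lambda\) in \(X_\mu\) form an inverse system of quasi-compact schemes whose limit is empty, so they are empty for \(\mu\gg\lambda\) (Tag~01Z2). After enlarging \(\lambda\), \(E_\lambda\) is therefore a vector bundle in the paper's sense.

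For (3), set \(P_\lambda:=\PP_{X_\lambda}(E_\lambda)\). Formation of \(\PP\) and of \(\cO(1)\) commutes with base change, so \(P:=\PP_X(E)=P_\lambda\times_{X_\lambda}X\) and \(\cO_P(1)\) is the pullback of \(\cO_{P_\lambda}(1)\). The morphism \(P\to P_\lambda\) is affine, being a base change of the affine morphism \(X\to X_\lambda\). The pullback of an ample invertible sheaf under an affine morphism of qcqs schemes is ample (Tag~0892), which gives (3).

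For (2), we again use \(P=\varprojlim_{\mu\ge\lambda}P_\mu\), with \(P_\mu:=\PP_{X_\mu}(E_\mu)\) and \(E_\mu\) the pullback of \(E_\lambda\). This is a limit of qcqs schemes with affine transitions, and \(\cO_P(1)\) is the pullback of the compatible system \(\cO_{P_\mu}(1)\). By the limit result for ample invertible sheaves (Tag~0B8Y / EGA~IV~8.10.5.2 applied over \(\Spec\mathbb Z\)), if \(\cO_P(1)\) is ample then \(\cO_{P_\mu}(1)\) is ample for \(\mu\) large, so \(E_\mu\) is ample. The main obstacle is ensuring that this limit statement is available in exactly the needed generality, namely absolute ampleness for qcqs schemes rather than relative ampleness over a noetherian base. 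If it is not directly quotable, I would prove it by hand. Ampleness of \(\cO_P(1)\) means there are finitely many sections \(s_i\) of \(\cO_P(n)\) whose nonvanishing loci \(P_{s_i}\) are affine and cover \(P\). These sections descend to some \(P_\mu\), since \(\Gamma(P,\cO_P(n))=\varinjlim\Gamma(P_\mu,\cO_{P_\mu}(n))\) by quasi-compactness. The descended loci then cover \(P_\mu\) for large \(\mu\), because a constructible covering statement descends. They also become affine for large \(\mu\), because affineness of a limit of qcqs schemes descends (Tag~01Z6). This gives ampleness of \(\cO_{P_\mu}(1)\) by the criterion Tag~01PS.
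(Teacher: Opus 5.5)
Your proposal is correct and follows the paper's proof. You prove (1) by descending the finite locally free module and discarding the rank-zero open and closed locus. You prove (3) by pulling back along the affine morphism $\PP_X(E)\to\PP_{X_\lambda}(E_\lambda)$ and applying Stacks Tag~0892. You prove (2) by descending ampleness of $\cO(1)$ along $\PP_X(E)=\varprojlim\PP_{X_\mu}(E_\mu)$. The absolute limit statement you were unsure how to cite is the one the paper invokes, Stacks Tag~09MT, and your by-hand argument is essentially its proof.
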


\begin{proof}
	(1) is the content of \cite[Lemma~\texttt{0B8W}]{Stacks}. Let us choose an index
	\(\lambda\) and a finite locally free
	\(\cO_{X_\lambda}\)-module \(E_\lambda\) such that
	$E\simeq g_\lambda^*E_\lambda.$
	Let \(U_\lambda\subset X_\lambda\) be the open and closed locus on
	which \(E_\lambda\) has positive rank. Since \(E\) is a vector bundle
	in our convention, \(g_\lambda(X)\subset U_\lambda\). After increasing
	\(\lambda\), we may assume that \(E_\lambda\) has positive rank
	everywhere; see \cite[Lemma~\texttt{05F4}]{Stacks}. Thus
	\(E_\lambda\) is a vector bundle in our sense.
	For \(\mu\geq\lambda\), denote the transition morphism by
	\(g_{\mu\lambda}\colon X_\mu\to X_\lambda\), and set
	\[
	E_\mu:=g_{\mu\lambda}^*E_\lambda,
	\qquad
	Y_\mu:=\PP_{X_\mu}(E_\mu),
	\qquad
	L_\mu:=\cO_{Y_\mu}(1).
	\]
	We also set	$Y:=\PP_X(E)$ and $L:=\cO_Y(1).$
	Formation of the relative projective bundle and of its tautological
	line bundle commutes with arbitrary base change
	\cite[Lemma~\texttt{01O3}]{Stacks}. Consequently,
	\[
	Y\simeq\varprojlim_{\mu\geq\lambda}Y_\mu,
	\qquad
	L\simeq h_\mu^*L_\mu,
	\]
	where \(h_\mu\colon Y\to Y_\mu\) is the canonical projection. Each
	\(Y_\mu\) is quasi-compact and quasi-separated, and the transition
	morphisms in the inverse system \((Y_\mu)_{\mu\geq\lambda}\) are
	affine, since they are base changes of the corresponding transition
	morphisms in \((X_\mu)_{\mu\geq\lambda}\).
	
	Suppose that \(E\) is ample. Then \(L\) is ample, so
	\cite[Lemma~\texttt{09MT}]{Stacks} shows that \(L_\mu\) is ample for
	some \(\mu\geq\lambda\). Hence \(E_\mu\) is ample, proving (2).
	
To prove (3) suppose that \(E_\lambda\) is ample. The morphism
	\(g_\lambda\colon X\to X_\lambda\) is affine by
	\cite[Lemma~\texttt{01YX}]{Stacks}. Consequently,
	\(h_\lambda\colon Y\to Y_\lambda\) is affine, since it is obtained
	from \(g_\lambda\) by base change. Since
	$	L\simeq h_\lambda^*L_\lambda$,
	the ampleness of \(L\) follows from
	\cite[Lemma~\texttt{0892}]{Stacks}. Thus \(E\) is ample.
\end{proof}

\section{Second exterior power of ample vector bundles}

\begin{theorem}\label{ample-second-wedge}
	Let $X$ be a scheme of finite type over a noetherian ring $R$, and let $E$ be an ample vector bundle of rank $r\ge 2$ on $X$. Then $\bigwedge^2 E$ is ample.
\end{theorem}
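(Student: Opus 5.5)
We follow the strategy outlined in the introduction. Choose $s$ with $r\le 2s+1$, let $U$ be free of rank $2s$ over $R$ with a split symplectic form, and let $G=\Sp(U,\sigma)$; this is a reductive $R$-group scheme and hence power reductive by Theorem~\ref{thm:reductive-implies-power-reductive}. Set $F:=E\otimes_R U\simeq E^{\oplus 2s}$. This bundle is ample by Proposition~\ref{direct-sums}, and $G$ acts on it fibrewise. Put $Y:=\PP_X(F)$ with $\pi\colon Y\to X$. By the glued form of Corollary~\ref{DeConcini-Procesi-2},
\[
\bigl(\Sym_{\cO_X}^{2n}F\bigr)^G\simeq\Sym^n_{\cO_X}\bigl(\textstyle\bigwedge^2E\bigr),
\qquad
\bigl(\Sym^{2n+1}_{\cO_X}F\bigr)^G=0 .
\]
Hence the relative GIT quotient of $Y$ over $X$ is $Q:=\PP_X(\bigwedge^2E)$. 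We get a surjective $G$-invariant affine morphism $q\colon Y^{\mathrm{ss}}_X\to Q$ with $q^*\cO_Q(1)\simeq\cO_Y(2)|_{Y^{\mathrm{ss}}_X}$. We must show that $\cO_Q(1)$ is ample.

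Next we produce invariant sections. Fix an ample line bundle $A$ on $X$ (it exists because $X$ is quasi-projective over $R$: $\det E$ is ample by Proposition~\ref{determinant}). Since $F$ is ample, Proposition~\ref{equivalent-definitions} gives, for $m\gg0$, that $\Sym^{m}F\otimes A^{-1}$ is globally generated. Equivalently, $M:=\cO_Y(m)\otimes\pi^*A^{-1}$ is globally generated on $Y$, and in fact it is ample. Choose a finitely generated $G$-stable $R$-submodule $V\subset H^0(Y,M)$ that generates $M$ and gives a $G$-equivariant immersion $j\colon Y\hookrightarrow\PP_R(V)$ with $j^*\cO(1)=M$. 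Finiteness of $V$ comes from local finiteness of $G$-comodules, after enlarging to a finitely generated subcomodule; over a noetherian $R$ one may further need to handle projectivity of $V$, for example by working with a surjection from a finite free comodule, or locally on $\Spec R$. Invariants $t\in \Sym^d(V)^G$ pull back to sections of $M^{d}=\cO_Y(md)\otimes\pi^*A^{-d}$ that are $G$-invariant. They therefore come from sections of $\Sym^{md/2}(\bigwedge^2E)\otimes A^{-d}$ on $X$, that is, from sections of $\cO_Q(md/2)\otimes A^{-d}$.

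The key step is to show that these sections separate enough points to make $\cO_Q(1)$ ample. Concretely, we aim to show two things. First, the sections of $\cO_Q(N)\otimes A^{-d}$ arising this way have no common zero on $Q$. Second, the resulting morphism $Q\to\PP_R(V)\sslash G$ is quasi-finite (finite on the fibres of $Q\to X$). Then $\cO_Q(N)\otimes A^{-d}$ is a pullback of an ample bundle under a quasi-finite morphism to a projective scheme, so it is ample. Since $A$ is ample, $\cO_Q(N)=(\cO_Q(N)\otimes A^{-d})\otimes A^{d}$ is ample too.

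The two claims are checked as follows. For base-point freeness, take a point of $Q$ and lift it via the surjectivity of $q$ to a relatively semistable point $y\in Y$. There is a relative invariant $s$ of $\cO_Y(2n)$ that is nonzero at $y$ on an affine neighbourhood. Multiplying by a suitable power of a section of $M$ and using power reductivity in the form of Proposition~\ref{prop:power-reductivity}(2), applied to the power-surjective restriction $\Sym(V)\to\bigoplus_d H^0(Y_y\text{-fibre},\ldots)$, we obtain a global invariant in $\Sym^d(V)^G$ that is nonzero at $y$. This shows $j(y)\in\PP(V)^{\mathrm{ss}}$. Fibrewise finiteness reduces, on geometric fibres over $x\in X$, to the classical fact that a $G$-equivariant linear embedding $\PP(F_x)\hookrightarrow\PP(V_x)$ induces a finite map of GIT quotients. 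This holds because $\Sym(F_x)^G$ is integral over the image of $\Sym(V_x)^G$ by Proposition~\ref{prop:power-reductivity}(3).

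The main obstacle is the base-point-freeness step. A single invariant section on the fibre must be extended to a global invariant section of the twisted bundle $\cO_Y(md)\otimes\pi^*A^{-d}$, uniformly over the non-proper base $X$. This is exactly where the twist by $A^{-1}$ and power reductivity over $R$, rather than over a field, are needed. We would first try to carry it out locally on $\Spec R$, using Theorem~\ref{thm:finite-generation-invariants} to bound the degrees involved.
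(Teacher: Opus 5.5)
Your set-up matches the paper's Step~1, and your polarization $\cO_Y(m)\otimes\pi^*A^{-1}$ plays the role of the paper's $B_0$. The concluding step, however, does not go through as written. You need $h\colon Q\to P:=\PP_R(V)\sslash G$ to be quasi-finite, but the only justification you give is finiteness on the fibres of $g\colon Q\to X$. That controls $h^{-1}(p)\cap Q_x$ for each $x$, not $h^{-1}(p)$ itself. A priori $h^{-1}(p)$ could be a non-proper curve mapping onto a curve in $X$, so ``pullback of an ample bundle by a quasi-finite morphism is ample'' is not yet available.

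What fibrewise finiteness does give is that $(h,g)\colon Q\to P\times_R X$ is quasi-finite. It is proper since $g$ is, hence finite, and then $\cO_Q(N)\simeq(h,g)^*\bigl(\cO_P(k)\boxtimes A^{d}\bigr)$ is ample. This is how the paper argues for $\Gamma^nE$ in the proof of Theorem~\ref{thm:divided-powers-ample}. For $\bigwedge^2E$ the paper needs no finiteness at all. It first chooses $a$ with $\cO_Q(1)\otimes g^*H^{a}$ ample ($H$ ample on $X$; possible because $\cO_Q(1)$ is $g$-ample). It then twists by exactly that amount, $B_0=\cO_Y(2(N-1))\otimes f^*H^{-a}$, so that $\cO_Q(N)=\bigl(\cO_Q(1)\otimes g^*H^{a}\bigr)\otimes D_0$ with $q^*D_0\simeq B_0|_{Y^{o}}$. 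Base-point freeness of a power of $D_0$ then gives ampleness directly, since ample $\otimes$ globally generated is ample. With your $A$ the same trick works once the degree of the invariants is taken large.

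The base-point-freeness step, which you flag as the main obstacle, is also not correctly described. The restriction from $\Sym_RV$ to the section ring $\bigoplus_d H^0(Y_x,\cO_{Y_x}(md))$ of a fibre is not power-surjective. Its target is a $\kappa(x)$-algebra, and already in degree $0$ an element like $1/2\in\mathbb Q$ (for $R=\mathbb Z$ and $x$ over the generic point) has no power in the image of $R$. So Proposition~\ref{prop:power-reductivity} cannot be applied to it, and a global invariant restricting to a power of your fibre invariant need not exist.

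Lemma~\ref{semistability} supplies the correct chain:
\begin{enumerate}
\item For a geometric point $\bar y$ over $\bar x\colon\Spec\Omega\to X$, the map $j_{\bar x}$ is a closed immersion (its source is proper) with $j_{\bar x}^*\cO(1)\simeq B|_{Y_{\bar x}}$. The twist by $H$ is a trivial one-dimensional module there, so fibrewise semistability gives $j_{\bar x}(\bar y)\in\PP(V_\Omega)^{\mathrm{ss}}$.
\item Flat base change moves a nonvanishing invariant from $\Omega$ down to $\kappa(\mathfrak p)$, where $\mathfrak p$ is the image of $\bar x$ in $\Spec R$.
\item Power reductivity of $G_{R_{\mathfrak p}}$, applied to the genuinely surjective map $(\Sym_RV)_{\mathfrak p}\to\Sym_RV\otimes_R\kappa(\mathfrak p)$, lifts a power of that invariant.
\item Since invariants commute with localization, clearing a denominator $a\notin\mathfrak p$ gives $T\in(\Sym^d_RV)^G$ with $T(j(\bar y))\neq0$.
\end{enumerate}
After this, your descent of $j^*T$ to a section on $Q$ through the De Concini--Procesi isomorphism is fine.
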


\begin{proof}

	\emph{Step 1: The relative version of fundamental theorems for the symplectic group.}
	
	Choose a split symplectic $R$-module $(U,\sigma)$ of rank $2s\geq r-1$, and put
	\[
	W:=E\otimes_R U\simeq E^{\oplus 2s}.
	\]
	$W$ is ample by Proposition \ref{direct-sums}. So the line bundle $L:=\cO_{\PP_X(W)}(1)$ is ample on $Y:=\PP_X(W)$.
	
	Let $G=\Sp_R(U,\sigma)$ be the corresponding reductive group scheme over $R$.
	We denote its base change to $X$ again by $G$. This group scheme acts on $Y$ so that the vector bundle $W$ has a canonical $G$-linearization, and the canonical projection $f: Y\to X$ is $G$-invariant.
	So we can consider the relative projective GIT quotient
	\[
	q\colon Y^{o}:=Y^{\mathrm{ss}}_X(L)\longrightarrow Y^{\mathrm{ss}}_X(L)\sslash G= \Proj_X\bigl((\Sym W)^G\bigr)
	\]
	over $X$. Let us consider
	$g\colon Q:=\PP_X\bigl(\textstyle\bigwedge^2E\bigr) \to X$ and  set $M:=\cO_{\PP_X\bigl(\textstyle\bigwedge^2E\bigr)}(1)$. 
	Corollary \ref{DeConcini-Procesi-2}, applied locally on $X$, implies that the canonical homomorphism
	\[
	\Sym\bigl(\textstyle\bigwedge^2E\bigr) \to \bigl(\Sym W\bigr)^G 
	\]
	is an isomorphism of appropriately graded $\cO_X$-algebras, where $\bigl(\Sym W\bigr)^G$ vanishes in odd  degrees. These local isomorphisms are compatible with changes of trivialization of $E$ and hence glue.
	Therefore $Y^{\mathrm{ss}}_X(L)\sslash G$ can be identified with $Q$. The quotient map $q$ is surjective, and we have 
	a canonical isomorphism
	\begin{equation}
		\label{pullback-of-M}
		q^*M\simeq L^2|_{Y^{o}}
	\end{equation}
	of $G$-linearized line bundles (we consider $q^*M$ with the natural pullback linearization).
	
	\medskip
	\noindent
	\emph{Step 2: Absolute ampleness of $M$.}
	
	The line bundle $M$ is $g$-ample. Choose an ample line bundle $H$ on $X$ (such a line bundle exists as $\det E$ is ample by Proposition \ref{determinant}). Let us also choose a positive
	integer $a>0$ such that
	\[
	A:=M\otimes g^*H^a
	\]
	is ample on $Q$. By \cite[Corollaire 4.5.8]{EGA2} we can choose  $N>1$ so that
	\[
	B_0:=L^{2(N-1)}\otimes f^*H^{-a}
	\]
	is ample on $Y$. Let us  set  $D_0:=M^N\otimes A^{-1}$.
	Then by (\ref{pullback-of-M}) we have 
	an isomorphism of $G$-linearized  line bundles 
	$
	q^*D_0\simeq B_0|_{Y^{o}}.$
	Let $t>0$ be such that $B:=B_0^t$ is very ample over $\Spec R$, and set
	$D:=D_0^t$ so that we get an isomorphism of $G$-linearized  line bundles 
	\begin{equation}
		\label{pullback-of-D}
		q^*D\simeq B|_{Y^{o}}.	
	\end{equation}
	
	Choose finitely many sections of $B$ defining a locally closed immersion of
	$Y$ into a projective space over $R$. Since $R$ is noetherian, every
	$G$-module is the union of its $G$-submodules which are finitely generated
	over $R$ (see \cite[Proposition 2]{Ser68}). Hence these sections are contained
	in a finitely generated $G$-stable $R$-submodule
	$V\subseteq H^0(Y,B).$
	The corresponding linear system defines a $G$-equivariant locally closed immersion
	$j\colon Y\to\PP_R(V)$
	with an isomorphism $j^*\cO_{\PP_R(V)}(1)\simeq B$ of $G$-linearized line bundles
	(this argument is a generalization of the proof of \cite[Chapter 1, Proposition 1.7]{Mumford-Fogarty-Kirwan}).	
  
\begin{lemma}\label{semistability}
	For every geometric point \(\bar y\) of \(Y^{o}\), there exist an
	integer \(d_{\bar y}>0\) and an invariant
	\[
	T_{\bar y}\in
	\bigl(\Sym_R^{d_{\bar y}}V\bigr)^G
	\]
	such that
$T_{\bar y}\bigl(j(\bar y)\bigr)\neq 0.$
\end{lemma}

\begin{proof}
	Let \(\bar y\colon\Spec\Omega\to Y^{o}\) be a geometric point and
	set \(\bar x=f(\bar y)\), where \(\Omega\) is algebraically closed.
	The restriction of the base change of \(j\) gives a
	\(G_\Omega\)-equivariant morphism
	\[
	j_{\bar x}\colon
	Y_{\bar x}=\PP(W_{\bar x})
	\longrightarrow \PP_\Omega(V_\Omega).
	\]
	It is a closed immersion by \cite[Lemma 01W6]{Stacks}. Indeed, it is a locally closed immersion,
	and \(Y_{\bar x}\) is proper over \(\Omega\), whereas
	\(\PP_\Omega(V_\Omega)\) is separated over \(\Omega\).
	
	Since
	\(\bar y\in Y^{o}=Y_X^{\mathrm{ss}}(L)\), the point \(\bar y\) is
	\(L|_{Y_{\bar x}}\)-semistable; see Subsection
	\ref{subsection:relativeGIT}. Moreover,
	\[
	B|_{Y_{\bar x}}
	\simeq
	L^{2t(N-1)}|_{Y_{\bar x}}
	\otimes_\Omega H_{\bar x}^{-at},
	\]
	where \(H_{\bar x}^{-at}\) is a one-dimensional
	\(\Omega\)-vector space with trivial \(G_\Omega\)-action. Hence
	\(\bar y\) is also \(B|_{Y_{\bar x}}\)-semistable. Since
	\[
	j_{\bar x}^*\cO_{\PP_\Omega(V_\Omega)}(1)
	\simeq B|_{Y_{\bar x}}
	\]
	as \(G_\Omega\)-linearized line bundles, the Hilbert--Mumford
	criterion shows that \(j_{\bar x}(\bar y)\) is semistable in
	\(\PP_\Omega(V_\Omega)\) (see also
	\cite[Chapter~1, Theorem~1.19 and the discussion after
	Corollary~1.20]{Mumford-Fogarty-Kirwan} for another argument).
	
		Set
	$\mathcal S_V:=\Sym_R V$
	and let \(\mathfrak p\in\Spec R\) be the image of \(\bar x\).
	By semistability, there exist an integer \(d_0>0\) and an invariant
	\[
	\overline{\tau}\in
	\bigl(\mathcal S_V\otimes_R\Omega\bigr)^{G_\Omega}_{d_0}
	\]
	such that
	$	\overline{\tau}\bigl(j_{\bar x}(\bar y)\bigr)\neq 0.$
	Flat base change for invariants gives an isomorphism
	\[
	\bigl(
	\mathcal S_V\otimes_R\kappa(\mathfrak p)
	\bigr)^{G_{\kappa(\mathfrak p)}}_{d_0}
	\otimes_{\kappa(\mathfrak p)}\Omega
	\xrightarrow{\ \sim\ }
	\bigl(
	\mathcal S_V\otimes_R\Omega
	\bigr)^{G_\Omega}_{d_0}.
	\]
	Writing \(\overline{\tau}\) as an \(\Omega\)-linear combination of
	base changes of invariants over \(\kappa(\mathfrak p)\), we find
	an element
	\[
	\tau\in
	\bigl(
	\mathcal S_V\otimes_R\kappa(\mathfrak p)
	\bigr)^{G_{\kappa(\mathfrak p)}}_{d_0},
	\]
	whose base change to \(\Omega\) does not vanish at
	\(j_{\bar x}(\bar y)\).
	
	The natural homomorphism of graded \(G_{R_{\mathfrak p}}\)-algebras
	\[
	(\mathcal S_V)_{\mathfrak p}
	\longrightarrow
	(\mathcal S_V)_{\mathfrak p}
	\otimes_{R_{\mathfrak p}}\kappa(\mathfrak p)
	\]
	is surjective. Since \(G_{R_{\mathfrak p}}\)  is power reductive by Theorem \ref{thm:reductive-implies-power-reductive}, 
	Proposition \ref{prop:power-reductivity} implies that there exist an integer \(m>0\) and
	an invariant
	\[
	\widetilde T\in
	\bigl((\mathcal S_V)_{\mathfrak p}^{\,G}\bigr)_{md_0}
	\]
	whose image is \(\tau^m\). Here we have taken the homogeneous
	component of degree \(md_0\) of an invariant lift.
	
	Invariants commute with localization, so
	$	(\mathcal S_V^G)_{\mathfrak p}
	=
	\bigl((\mathcal S_V)_{\mathfrak p}\bigr)^G.$
	Consequently, after clearing a denominator, there exist
	\(a\in R\setminus\mathfrak p\) and
	\[
	T_{\bar y}\in
	\bigl(\Sym_R^{md_0}V\bigr)^G
	\]
	such that \(\widetilde T=T_{\bar y}/a\). Since \(a\) is nonzero at
	\(\mathfrak p\), we obtain
	\[
	T_{\bar y}\bigl(j(\bar y)\bigr)
	=
	a(\bar y)\,
	\tau\bigl(j_{\bar x}(\bar y)\bigr)^m
	\neq 0.
	\]
	The assertion follows with \(d_{\bar y}=md_0\).
\end{proof}

\begin{lemma}\label{globally-generated}
	There exists an integer \(e>0\) such that \(D^e\) is globally
	generated.
\end{lemma}

\begin{proof}
	Fix a point \(z\in Q\), choose a geometric point
	\(\bar z\to Q\) lying over \(z\), and, using the surjectivity of
	\(q\), choose a geometric point $\bar y\in Y^{o}_{\bar z}.$
	By Lemma \ref{semistability}, there exist \(d_{\bar y}>0\) and
	\[
	T_{\bar y}\in
	\bigl(\Sym_R^{d_{\bar y}}V\bigr)^G
	\]
	such that \(T_{\bar y}(j(\bar y))\neq0\).	
	Let
	\[
	\widetilde T_{\bar y}\in
	H^0\bigl(\PP_R(V),\cO_{\PP_R(V)}(d_{\bar y})\bigr)^G
	\]
	be the image of \(T_{\bar y}\) under the canonical homomorphism
	\[
	\Sym_R^{d_{\bar y}}V
	\longrightarrow
	H^0\bigl(\PP_R(V),
	\cO_{\PP_R(V)}(d_{\bar y})\bigr).
	\]
	Pulling this section back by \(j\) gives
	\[
	s_{\bar y}:=j^*\widetilde T_{\bar y}
	\in H^0\bigl(Y,B^{d_{\bar y}}\bigr)^G,
	\qquad
	s_{\bar y}(\bar y)\neq0.
	\]
	By \eqref{pullback-of-D}, the restriction
	\(s_{\bar y}|_{Y^{o}}\) is a \(G\)-invariant section of
	\(q^*D^{d_{\bar y}}\).
	
	Since \(q\) is a good quotient, the projection formula gives
	\[
		\bigl(q_*q^*D^{d_{\bar y}}\bigr)^G
		\simeq
		\bigl(D^{d_{\bar y}}\otimes q_*\cO_{Y^{o}}\bigr)^G 
		\simeq
		D^{d_{\bar y}}
		\otimes\bigl(q_*\cO_{Y^{o}}\bigr)^G 
		\simeq D^{d_{\bar y}}.
	\]
	Thus \(s_{\bar y}|_{Y^{o}}\) descends uniquely to a section
	$\overline{s}_z\in H^0\bigl(Q,D^{d_{\bar y}}\bigr).$
	This section is nonzero at \(\bar z\), and hence at \(z\).
	
	Let \(U_z\subseteq Q\) be the nonvanishing locus of
	\(\overline{s}_z\). Since \(Q\) is of finite type over the
	noetherian ring \(R\), it is quasi-compact. We may therefore choose
	finitely many points \(z_1,\ldots,z_m\) such that
	\[
	Q=U_{z_1}\cup\cdots\cup U_{z_m}.
	\]
	Let \(d_i\) be the degree of \(\overline{s}_{z_i}\), and let \(e\)
	be a common multiple of \(d_1,\ldots,d_m\). The sections
	\[
	\overline{s}_{z_i}^{\,e/d_i}\in H^0(Q,D^e),
	\qquad 1\leq i\leq m,
	\]
	have no common zero. Hence \(D^e\) is globally generated.
\end{proof}

Finally, by the definition of \(D\), we have
\[
M^{Nte}=A^{te}\otimes D^e.
\]
The first factor on the right is ample, while the second is globally
generated by Lemma \ref{globally-generated}. Their tensor product is
therefore ample; see, for example,
\cite[Proposition~4.5.6]{EGA2}. Thus a positive power of \(M\) is
ample, and consequently \(M\) itself is ample. Since
\[
M=\cO_{\PP_X(\bigwedge^2E)}(1),
\]
the vector bundle \(\bigwedge^2E\) is ample.
\end{proof}

\begin{corollary}\label{ample-tensor-product}
	Let \(X\) be a scheme of finite type over a noetherian ring \(R\). If
	\(E_1\) and \(E_2\) are ample vector bundles on \(X\), then
	\(E_1\otimes E_2\) is ample.
\end{corollary}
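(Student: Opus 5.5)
The plan is to apply Theorem~\ref{ample-second-wedge} to the direct sum $F:=E_1\oplus E_2$, as announced in the introduction. By Proposition~\ref{direct-sums}, $F$ is ample, since both summands are ample.

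First I have to make sure the rank hypothesis of Theorem~\ref{ample-second-wedge} is met. The bundles $E_1$ and $E_2$ may have non-constant rank, so I decompose $X$ into its connected components. $X$ is noetherian, so it has finitely many connected components, each open and closed and of finite type over $R$. Ampleness of a vector bundle on a finite disjoint union is equivalent to ampleness on each piece: $\PP_X(F)$ is the disjoint union of the pieces, and a line bundle on a finite disjoint union is ample if and only if it is ample on each piece. I may therefore assume $X$ is connected. Then $E_1$ and $E_2$ have constant ranks $r_1,r_2\ge 1$, so $F$ has constant rank $r_1+r_2\ge 2$.

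Theorem~\ref{ample-second-wedge} then shows that $\bigwedge^2F$ is ample. There is a canonical decomposition
\[
\textstyle\bigwedge^2(E_1\oplus E_2)\simeq \bigwedge^2E_1\oplus (E_1\otimes E_2)\oplus \bigwedge^2E_2 ,
\]
valid over any scheme. It holds locally for free modules and is compatible with changes of trivialization, so it glues. The bundle $\bigwedge^2E_i$ may be zero when $r_i=1$, but this does no harm. The summand $E_1\otimes E_2$ has positive rank $r_1r_2$ and is a quotient vector bundle of $\bigwedge^2F$, via projection onto a direct summand. By Proposition~\ref{direct-sums}, $E_1\otimes E_2$ is therefore ample.

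There is no serious obstacle here. The only points needing care are the reduction to constant rank, which uses that $X$ has finitely many connected components, and the fact that in characteristic-free generality the splitting of $\bigwedge^2$ of a direct sum is an honest direct-sum decomposition, so no division by $2$ is needed.
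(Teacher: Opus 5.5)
Your proposal is correct and follows the paper's proof essentially verbatim: you reduce to connected $X$, apply Theorem~\ref{ample-second-wedge} to the ample bundle $E_1\oplus E_2$ of rank at least $2$, and conclude via the characteristic-free decomposition of $\bigwedge^2(E_1\oplus E_2)$ together with Proposition~\ref{direct-sums}. Your justification of the reduction to the connected case spells out what the paper leaves as ``without loss of generality'': $X$ has finitely many open and closed components, and ampleness can be checked componentwise.
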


\begin{proof} Without loss of generality we can assume that $X$ is connected.
	By Proposition~\ref{direct-sums}, the vector bundle \(E_1\oplus E_2\) is
	ample. Hence Theorem~\ref{ample-second-wedge} shows that
	\[
	\bigwedge\nolimits^2(E_1\oplus E_2)
	\simeq
	\bigwedge\nolimits^2E_1
	\oplus(E_1\otimes E_2)
	\oplus\bigwedge\nolimits^2E_2
	\]
	is ample. Since \(E_1\otimes E_2\) is a quotient of this bundle, it is
	ample by Proposition~\ref{direct-sums}.
\end{proof}

\medspace
\begin{remark}\label{failed-attempt}
	When \(R\) is an algebraically closed field---the case originally
	considered in this paper---Corollary~\ref{ample-tensor-product} can
	also be deduced from an analogue of
	Theorem~\ref{ample-second-wedge} for the second divided power.
	This analogue can be proved by adapting the proof of
	Theorem~\ref{ample-second-wedge}, using the fundamental theorems of
	invariant theory for the special orthogonal group in place of those
	for the symplectic group. The resulting argument is, however, more
	involved, since the cases of characteristic \(2\) and characteristic
	different from \(2\) must be treated separately. In characteristic
	\(2\), one must use the results of Domokos and
	Frenkel~\cite[Theorem~4.9]{DomokosFrenkel}. At present, this approach
	does not appear to extend to mixed characteristic.
\end{remark}

\section{Ampleness of higher divided powers}
\label{sec:higher-divided-powers}

Throughout this section, $R$ is a noetherian ring and $X$ is a scheme of finite type over $R$.

\medskip

\begin{theorem}
	\label{thm:divided-powers-ample}
	Let $E$ be an ample vector bundle on $X$.    Then  \(\Gamma^nE\) 
	is ample for every $n\geq 1$.
\end{theorem}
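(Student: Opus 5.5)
We plan to realize \(\Gamma^nE\) so that the power retraction of Lemma~\ref{lem:power-retraction} controls its positivity. By Corollary~\ref{ample-tensor-product}, \(E^{\otimes n}\) is ample. We have \(\Gamma^nE\subset E^{\otimes n}\), but ampleness does not pass to subbundles, so we work with the projective bundle \(Y:=\PP_X(E^{\otimes n})\) directly. We may assume \(X\) is connected, so that \(E\) has constant rank \(r\). Put \(G:=\GL_{r,R}\), let \(W:=(R^r)^{\otimes n}\) with its standard action, and let \(U\subset W\) be the \(G\)-submodule \(\Gamma^n(R^r)\). Both \(U\) and \(W/U\) are free, since \(\Gamma^n\) is a direct summand of the tensor power as an \(R\)-module; for instance, the symmetrized monomial basis extends to a basis. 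Lemma~\ref{lem:power-retraction} then gives \(d>0\) and a \(G\)-equivariant map \(\vartheta\colon\Gamma^dW\to\Sym^dU\) with \(\vartheta(\gamma_d(u))=u^d\).

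Twisting by the frame bundle, as in the construction \(E(-)\) from the Notation, yields a map of vector bundles
\[
\vartheta_E\colon\Gamma^d(E^{\otimes n})\longrightarrow\Sym^d(\Gamma^nE)
\]
on \(X\), again satisfying \(\vartheta_E(\gamma_d(u))=u^d\) on local sections \(u\) of \(\Gamma^nE\) after any base change. The aim is to show that \(\vartheta_E\) is surjective, or at least that its image generates \(\Sym^{dk}(\Gamma^nE)\) in all large degrees in a sense usable in Proposition~\ref{equivalent-definitions}.

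Surjectivity can be checked on fibres over algebraically closed fields. There the elements \(u^d\) span \(\Sym^d(\Gamma^nE_x)\) only when the field is large, and an algebraically closed field is infinite, so this holds. Hence \(\Sym^d(\Gamma^nE)\) is a quotient of \(\Gamma^d(E^{\otimes n})\).

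The remaining issue is that \(\Gamma^d\) of an ample bundle is again a divided power, so the argument is circular. Instead of quotients we should compose with the natural map \(\Sym^d\to\Gamma^d\). The image of \(u^{\otimes d}\)-type elements suggests using \(\Sym^d(E^{\otimes n})\to\Gamma^d(E^{\otimes n})\), which sends \(w^d\) to \(d!\,\gamma_d(w)\). This is useless in positive characteristic.

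The better route is to use \(E^{\otimes nd}\) together with the inclusion \(\Gamma^d(E^{\otimes n})\subset(E^{\otimes n})^{\otimes d}\), and to redo Lemma~\ref{lem:power-retraction} with \(\Sym^d\) replaced by a tensor-power source. Concretely, we would take \(M_0\) as in that lemma and use \(t\in(\Sym^dM_0)^G\) to build a \(G\)-map \(W^{\otimes d}\to\Sym^dU\) restricting to \(u^{\otimes d}\mapsto u^d\) on \(U^{\otimes d}\). This comes from the map \(\Sym^dM\to\Hom(W^{\otimes d},\Sym^dU)\), \(\varphi_1\cdots\varphi_d\mapsto\) the symmetrized \(\varphi_1\otimes\cdots\otimes\varphi_d\), which is defined over \(R\).

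Twisting this map gives \(E^{\otimes nd}\to\Sym^d(\Gamma^nE)\), and this map is surjective fibrewise over algebraically closed fields by the spanning argument above. So \(\Sym^d(\Gamma^nE)\) is a quotient of the ample bundle \(E^{\otimes nd}\), and is therefore ample by Corollary~\ref{ample-tensor-product} and Proposition~\ref{direct-sums}.

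Finally, \(\cO_{\PP(\Gamma^nE)}(d)\) is the pullback of \(\cO(1)\) under the finite Veronese-type morphism \(\PP(\Gamma^nE)\to\PP(\Sym^d\Gamma^nE)\), so it is ample. Hence \(\Gamma^nE\) is ample.

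The main obstacle is making the symmetrization well defined integrally, that is, checking that the resulting map restricts correctly on \(U^{\otimes d}\) without dividing by \(d!\). If that fails, the fallback is to argue directly on \(\PP_X(\Gamma^nE)\) via the sections of \(\Sym^{dk}\) coming from \(E^{\otimes ndk}\) and apply Proposition~\ref{equivalent-definitions}.
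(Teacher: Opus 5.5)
There is a genuine gap, and it is exactly the obstacle you flag at the end. Your construction of a $G$-map $W^{\otimes d}\to\Sym^dU$ with $u^{\otimes d}\mapsto u^d$ fails. The assignment $w_1\otimes\cdots\otimes w_d\mapsto\varphi_1(w_1)\cdots\varphi_d(w_d)$ is not symmetric in the $\varphi_i$. It becomes symmetric only after restricting to $\mathfrak S_d$-invariant tensors, i.e.\ to $\Gamma^dW$, which is precisely why Lemma~\ref{lem:power-retraction} has $\Gamma^dW$ as its source. Symmetrizing over $\mathfrak S_d$ instead sends $u^{\otimes d}$ to $d!\,u^d$, and this vanishes in the case that matters. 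If $d$ were invertible, polarizing $t$ (apply $\Sym^dM_0\to M_0\otimes\Sym^{d-1}M_0$, then $\Sym^{d-1}\ell$) would give $m\in M_0^G$ with $\ell(m)=d$, i.e.\ a $G$-equivariant retraction $W\to U$, and $\Gamma^nE$ would then simply be a quotient of $E^{\otimes n}$. So whenever $U\subset W$ has no equivariant retraction over a residue field of characteristic $p$, necessarily $p\mid d$ and $d!=0$ there. You are therefore left with the circularity you identified (needing ampleness of $\Gamma^d(E^{\otimes n})$), and your fallback presupposes the same missing map.

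Separately, your surjectivity argument is wrong, though this part is harmless. Over an algebraically closed field of characteristic $p$, the $d$-th powers $u^d$ span only the image of $\Gamma^dU\to\Sym^dU$, which misses monomials whose multinomial coefficient is divisible by $p$ (e.g.\ $(x+y)^p=x^p+y^p$). But on $Y=\PP_X(\Gamma^nE)$ you only need surjectivity onto the tautological quotient $\cO_Y(d)$. That follows from a single element, via $u^d\mapsto\lambda(u)^d\neq0$.

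The idea you are missing is that the source only needs to be globally generated, not ample, and $\Gamma^d$ preserves global generation. Indeed, a surjection $\cO_X^N\twoheadrightarrow B$ splits locally, so $\Gamma^d\cO_X^N\twoheadrightarrow\Gamma^dB$. The paper proceeds as follows.
\begin{enumerate}
\item Choose a very ample $H$ and $q$ with $B:=\Sym^q(E^{\otimes n})\otimes H^{-1}$ globally generated.
\item Apply Lemma~\ref{lem:power-retraction} to $\Sym^qU\subset\Sym^qW$ and compose with multiplication $\Sym^d(\Sym^qU)\to\Sym^{dq}U$. This gives $\Gamma^dB\to\Sym^{dq}(\Gamma^nE)\otimes H^{-d}$.
\item Pulled back to $Y$ and composed with the tautological quotient, this map is onto $\cO_Y(dq)\otimes\pi^*H^{-d}$; check on $\gamma_d(u^q\otimes\eta)$. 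Hence $\cO_Y(dq)\otimes\pi^*H^{-d}$ is globally generated.
\item The induced morphism $(f,\pi)\colon Y\to\PP^m_R\times_RX$ is proper, and quasi-finite because $\cO_Y(dq)$ is ample on the fibres of $\pi$. Hence it is finite.
\item It pulls the ample bundle $\cO(1)\boxtimes H^d$ back to $\cO_Y(dq)$, so $\cO_Y(dq)$ is ample.
\end{enumerate}
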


\begin{proof} 	
Fix $n\geq 1$. Without loss of generality we can assume that $E$ has constant rank $r\ge 1$. We set
	\[V:=R^r,\qquad	G:=\GL(V),
	\qquad	U:=\Gamma^nV,\qquad
	W:=V^{\otimes n}.	\]
	Since $\Gamma^nV\simeq (V^{\otimes n})^{\mathfrak S_n}$,
	$U\subset W$ is a $G$-submodule. Moreover, with respect to the standard
	tensor basis of $W$, the submodule $U$ has a basis consisting of the sums
	of the elements in the $\mathfrak S_n$-orbits. Choosing one tensor in each
	orbit and mapping it to the sum of the elements in that orbit, while
	mapping the other tensors to zero, gives an $R$-linear retraction
	$W\to U$. Thus $U$ is an $R$-module direct summand of $W$ and $W/U$ is
	finite free.
	
Passing from these $G$-modules to the associated  vector bundles, we obtain
	\[
	F:=E(U)=\Gamma^n E,
	\qquad 
	A:=E(W)=E^{\otimes n}. 
	\]
By Corollary \ref{ample-tensor-product}, $A$ is ample. Choose a very ample line bundle $H$ on $X/R$
(recall that  by Proposition \ref{determinant}, $\det E$ is ample so such $H$ exists).  
Proposition \ref{equivalent-definitions} gives an integer $q>0$ such that 
	\[
	B:=\operatorname{Sym}^qA\otimes H^{-1}
	\]
	is globally generated. 
The inclusion $U\subset W$ induces a $G$-equivariant
	inclusion
	\[
	\operatorname{Sym}^qU\subset \operatorname{Sym}^qW.
	\]
	The group scheme $G=\GL(V)$ is reductive and  $U$ is an $R$-module direct summand of $W$, so we may apply
	Lemma~\ref{lem:power-retraction} to this inclusion.  For some $d>0$ it
	gives
	\[
	\Gamma^d(\operatorname{Sym}^qW)
	\longrightarrow
	\Sym^d(\Sym^qU),
	\]
	which sends $\gamma_d(u^q)$ to $(u^q)^d$.  Composing with multiplication of symmetric powers
$\Sym^d(\Sym^qU)\to	\Sym^{dq}U$, we get a $G$-equivariant map	
\[ \Gamma^d(\operatorname{Sym}^qW)	\longrightarrow	\Sym^{dq}U.\]
Passing to associated bundles and twisting by $H^{-d}$	gives
	\begin{equation}
		\label{eq:globalized-power-retraction}
		\beta\colon
		\Gamma^dB\simeq \Gamma^d(\operatorname{Sym}^qA)\otimes H^{-d}
		\longrightarrow
		\Sym^{dq}F\otimes H^{-d}.
	\end{equation}
Since $B$ is globally generated, the bundle  $\Gamma^dB$ is also globally generated.  Indeed, choose a finite
	surjection	$	\cO_X^N\twoheadrightarrow B.$
It splits locally because $B$ is locally free, and hence applying
	$\Gamma^d$ gives a surjection
	$\Gamma^d(\cO_X^N)\twoheadrightarrow\Gamma^dB$
from a trivial vector bundle.

Let $\pi\colon Y:=\PP_X(F)\longrightarrow X.$
Pulling back \eqref{eq:globalized-power-retraction} and composing with the
tautological quotient gives a homomorphism
\begin{equation}
	\label{eq:Q-quotient}
	\pi^*\Gamma^dB
	\longrightarrow
	Q:=\cO_Y(dq)\otimes\pi^*H^{-d}.
\end{equation}
This homomorphism is surjective. Indeed, it is enough to check this at
geometric points. Let $\overline{y}$ be a geometric point of $Y$, let
$\overline{x}=\pi(\overline{y})$, and let $\lambda\colon F_{\overline{x}}\twoheadrightarrow L$
be the corresponding one-dimensional quotient. Choose
$u\in F_{\overline{x}}$ with $\lambda(u)\neq 0$ and
$0\neq\eta\in H_{\overline{x}}^{-1}$. By the construction of $\beta$ and
its compatibility with base change, the element
\[
\gamma_d(u^q\otimes\eta)\in\Gamma^dB_{\overline{x}}
\]
maps to
\[
\lambda(u)^{dq}\eta^d
\in L^{dq}\otimes H_{\overline{x}}^{-d},
\]
which is nonzero. Thus \eqref{eq:Q-quotient} is everywhere surjective.
Since $\Gamma^dB$ is globally generated, $Q$ is globally generated.

Choose finitely many global sections generating $Q$, and let
$f\colon Y\longrightarrow\PP^m_R$
be the resulting morphism. Then
$f^*\cO_{\PP^m}(1)\simeq Q$. Consider
\[
h=(f,\pi)\colon Y\longrightarrow\PP^m_R\times_R X.
\]
Since $\pi$ is proper and $\PP^m_R\times_R X$ is separated over $X$, the
morphism $h$ is proper by \cite[Lemma 01W6]{Stacks}. For every geometric
point $\overline{x}\to X$, one has
\[
f_{\overline{x}}^*\cO_{\PP^m}(1)
\simeq
Q|_{Y_{\overline{x}}}
\simeq
\cO_{\PP(F_{\overline{x}})}(dq),
\]
which is ample. Therefore $f_{\overline{x}}$ has no
positive-dimensional fiber: on such a fiber the displayed line bundle
would be trivial, contradicting its ampleness. Hence the geometric fibers
of $h$ are zero-dimensional, so $h$ is quasi-finite. Since a proper
quasi-finite morphism is finite, $h$ is finite.

The line bundle $\cO_{\PP^m}(1)\boxtimes H^d$ is ample on
$\PP^m_R\times_R X$. Since $h$ is finite, it is affine, and finite
pullback preserves ampleness (see \cite[Lemma 0892]{Stacks}). Moreover,
\[
h^*\bigl(\cO_{\PP^m}(1)\boxtimes H^d\bigr)
\simeq
Q\otimes\pi^*H^d
\simeq
\cO_Y(dq).
\]
Thus $\cO_Y(dq)$ is ample. It follows that $\cO_Y(1)$ is ample, and
therefore $F=\Gamma^nE$ is ample.
\end{proof}

\medskip 
 
\section{Polynomial representations and relative ampleness}

In this section we show a few applications of our results. In the following we freely use the notions introduced in the Appendix.

\begin{corollary}\label{representations-of-ample-1}
	Let \(X\) be a scheme of finite type over a noetherian ring \(R\).
	Let \(E\) be an ample vector bundle of rank \(r\geq 1\) on \(X\), and
	let \(W\) be a finite locally free polynomial
	\(\GL_{r,R}\)-module of positive rank at every point of \(\Spec R\).
	If the degree zero homogeneous component \(W_0\) vanishes, then the
	associated vector bundle \(E(W)\) is ample. In particular,
	\(\Sym^n E\) and \(\Gamma^n E\) are ample for every \(n>0\), and
	\(\bigwedge^n E\) is ample for \(1\leq n\leq r\).
\end{corollary}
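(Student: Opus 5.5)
The plan is to reduce to Theorem~\ref{thm:divided-powers-ample} by exhibiting $E(W)$ as a quotient of a direct sum of divided powers of $E^{\oplus r}$. Then Proposition~\ref{direct-sums} finishes the argument.

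Put $V:=R^r$ and $G:=\GL(V)=\GL_{r,R}$. By the Appendix, a polynomial $G$-module decomposes as $W=\bigoplus_{d\geq0}W_d$ into homogeneous components. Since $W$ is finite locally free, only finitely many $W_d$ are nonzero, and each is finite locally free (a direct summand). By hypothesis $W_0=0$, so $W=\bigoplus_{d=1}^{D}W_d$.

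A homogeneous polynomial module of degree $d$ is a comodule over $\Sym^d_R(\End(V)^*)$. Equivalently, it is a module over the Schur algebra
\[
S(r,d):=\Gamma^d_R(\End V)=\bigl(\Sym^d_R(\End(V)^*)\bigr)^*,
\]
and the $G$-action is $g\cdot w=\gamma_d(g)\,w$. Let $G$ act on $S(r,d)$ by left multiplication through $\gamma_d$. Then the action map $S(r,d)\otimes_R W_d\to W_d$, $s\otimes w\mapsto sw$, is $G$-equivariant when $W_d$ in the source factor carries the trivial action, because $g(sw)=(\gamma_d(g)s)w$. It is surjective since $1\in S(r,d)$.

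As a left $G$-module, $\End V=V\otimes_R V^*\simeq V^{\oplus r}$, with $G$ acting only on the first factor. Functoriality of $\Gamma^d$ then gives $S(r,d)\simeq\Gamma^d(V^{\oplus r})$ as $G$-modules. Choosing a surjection $R^{N_d}\twoheadrightarrow W_d$ of $R$-modules, which exists since $W_d$ is finitely generated and carries trivial action here, we get a $G$-equivariant surjection
\[
\bigoplus_{d=1}^{D}\Gamma^d\bigl(V^{\oplus r}\bigr)^{\oplus N_d}\twoheadrightarrow W .
\]

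Next we pass to associated bundles. The construction $W\mapsto E(W)$ is exact on surjections of locally free modules, since locally on $X$ it is just base change along a trivialization. It also commutes with direct sums and divided powers. We therefore obtain a surjection
\[
\bigoplus_{d=1}^{D}\Gamma^d\bigl(E^{\oplus r}\bigr)^{\oplus N_d}\twoheadrightarrow E(W).
\]
The bundle $E^{\oplus r}$ is ample by Proposition~\ref{direct-sums}, so each $\Gamma^d(E^{\oplus r})$ with $d\geq1$ is ample by Theorem~\ref{thm:divided-powers-ample}. Hence the source is ample, again by Proposition~\ref{direct-sums}, and since $W$ has positive rank, $E(W)$ is an ample quotient. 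The special cases follow because $\Sym^n V$, $\Gamma^n V$ and $\bigwedge^n V$ (for $1\le n\le r$) are homogeneous polynomial modules of degree $n\ge1$ with positive rank.

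The main obstacle is the representation-theoretic input over an arbitrary noetherian $R$. One needs that a finite locally free polynomial module is genuinely a module over $\Gamma^d(\End V)$ compatibly with the $G$-action, and that $\Gamma^d(\End V)\simeq\Gamma^d(V^{\oplus r})$ equivariantly, without any field hypotheses. I expect this to follow directly from the comodule description in the Appendix together with the identification $\Gamma^d M\simeq(\Sym^d M^*)^*$ for finite projective $M$. A minor bookkeeping point is that some $E(W_d)$ may vanish on components, which is harmless because we only ever take one quotient of a single ample bundle.
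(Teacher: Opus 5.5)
Your proof is correct and follows the paper's route: the Schur-algebra surjection $\bigoplus_d S(r,d)^{\oplus N_d}\twoheadrightarrow W$ with $S(r,d)\simeq\Gamma^d(V^{\oplus r})$, then passage to associated bundles, then direct sums and quotients. The only difference is that you stop at $\Gamma^d(V^{\oplus r})$ and apply Theorem~\ref{thm:divided-powers-ample} directly to the ample bundle $E^{\oplus r}$. The paper instead splits $\Gamma^d(V^{\oplus r})$ into the modules $\Gamma^\lambda V$ and invokes Corollary~\ref{ample-tensor-product} for $\Gamma^\lambda E=\bigotimes_i\Gamma^{\lambda_i}E$, a step your version makes unnecessary, although the bundles involved are the same.
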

\begin{proof}
	Put $V=R^r$. By the homogeneous decomposition of $W$ and
	Proposition~\ref{prop:divided-power-generators}, there exist nonnegative
	integers $m_{d,\lambda}$, only finitely many of which are nonzero, and a
	$\GL_{r,R}$-equivariant surjection
	\[
	\bigoplus_{d>0}\ \bigoplus_{\lambda\in\Lambda^+(r,d)}
	\bigl(\Gamma^\lambda V\bigr)^{\oplus m_{d,\lambda}}
	\twoheadrightarrow W.
	\]
	The associated-bundle construction commutes with finite direct sums and
	tensor products and carries equivariant surjections to surjections.
	Moreover,
	\[
	E\bigl(\Gamma^\lambda V\bigr)
	\simeq
	\Gamma^\lambda E
	:=
	\bigotimes_{i=1}^r\Gamma^{\lambda_i}E.
	\]
	Consequently, the preceding surjection induces a surjection of vector
	bundles
	\[
	\bigoplus_{d>0}\ \bigoplus_{\lambda\in\Lambda^+(r,d)}
	\bigl(\Gamma^\lambda E\bigr)^{\oplus m_{d,\lambda}}
	\twoheadrightarrow E(W).
	\]
	
	By Theorem~\ref{thm:divided-powers-ample}, all the bundles
	$\Gamma^aE$, $a\geq1$, are ample. For
	$\lambda\in\Lambda^+(r,d)$ with $d>0$, the bundle $\Gamma^\lambda E$ is
	therefore ample by Corollary~\ref{ample-tensor-product}, where the factors
	corresponding to $\lambda_i=0$ are omitted. Proposition~\ref{direct-sums}
	then shows that the source of the displayed surjection is ample and that
	its quotient $E(W)$ is ample.

	Applying this to	$W=\Sym_R^n(R^r)$,	$W=\Gamma^n(R^r) $ and $W=\bigwedge_R^n(R^r)$,
	respectively, gives the final assertions.
\end{proof}

Using relative noetherian approximation, we obtain the following generalization of Corollary \ref{representations-of-ample-1}:

\begin{corollary} \label{representations-of-ample}
	Let \(f\colon X\to S\) be a morphism of schemes.
	\begin{enumerate}
		\item If \(E_1\) and \(E_2\) are \(f\)-ample vector bundles, then
		\(E_1\otimes E_2\) is \(f\)-ample.
		\item Let \(E\) be an \(f\)-ample vector bundle of rank \(r\geq1\),
		and let \(W\) be a finite locally free polynomial
		\(\GL_{r,S}\)-module of positive rank at every point of \(S\).
		If its degree-zero homogeneous component \(W_0\) vanishes, then
		\(E(W)\) is \(f\)-ample.
		
		\item If \(E\) is an \(f\)-ample vector bundle of rank \(r\), then
		\(\Sym^nE\) and \(\Gamma^nE\) are \(f\)-ample for every \(n>0\), and
		\(\bigwedge^nE\) is \(f\)-ample for \(1\leq n\leq r\).
	\end{enumerate}
\end{corollary}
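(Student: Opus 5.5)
The plan is to reduce to Corollary~\ref{representations-of-ample-1} and Corollary~\ref{ample-tensor-product} by working locally on \(S\) and then applying noetherian approximation. Part (3) follows from (2) applied to \(W=\Sym^n_S(\cO_S^r)\), \(W=\Gamma^n(\cO_S^r)\) and \(W=\bigwedge^n(\cO_S^r)\). Part (1) follows from (2) as in the proof of Corollary~\ref{ample-tensor-product}: first reduce to constant ranks \(r_1,r_2\) by decomposing \(X\) into open and closed pieces, which is harmless since \(f\) is quasi-compact by Lemma~\ref{relatively-ample-implies-qcqs}. Then \(E_1\oplus E_2\) is \(f\)-ample (the relative analogue of Proposition~\ref{direct-sums}), so \(\bigwedge^2(E_1\oplus E_2)\) is \(f\)-ample by (2). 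Its quotient \(E_1\otimes E_2\) is then \(f\)-ample, because quotients of \(f\)-ample bundles are \(f\)-ample: \(\PP(\text{quotient})\) is a closed subscheme and \(\cO(1)\) restricts. Alternatively, (1) can be run directly through the same approximation argument. So the main work is (2).

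For (2), relative ampleness is local on \(S\), so we may assume \(S=\Spec A\) affine. Then \(f\)-ampleness of \(E\) is equivalent to ampleness of \(\cO_{\PP_X(E)}(1)\) on the scheme \(\PP_X(E)\), since a line bundle is relatively ample over an affine base iff it is ample with \(X\) quasi-compact (Stacks 01VI). By Lemma~\ref{relatively-ample-implies-qcqs}, \(X\) is quasi-compact and separated, so \(E\) is simply an ample vector bundle on the qcqs scheme \(X\), and it suffices to show \(E(W)\) is ample. Write \(A=\varinjlim A_\mu\) over finitely generated \(\mathbb Z\)-subalgebras. The \(\GL_{r,A}\)-module \(W\) is a finite projective comodule over \(A[\GL_r]\), and the descent of \(W\) to some \(A_\mu\) consists of finitely presented data: a projective module, a comodule map, and its coassociativity and counit identities. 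So for large \(\mu\) we get a finite locally free polynomial \(\GL_{r,A_\mu}\)-module \(W_\mu\) with \(W_\mu\otimes A=W\). Polynomiality means the coaction lands in \(W\otimes A[\mathrm{Mat}_r]\), which is a finiteness condition that also descends. The same holds for the grading with \(W_{\mu,0}=0\), since the homogeneous decomposition descends and the degree-zero piece, being finite projective and vanishing after base change, vanishes after enlarging \(\mu\). Positive rank on every component can be arranged in the same way as in Lemma~\ref{approximation-of-ample-vector-bundles}.

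Next, by absolute noetherian approximation (Stacks 01ZA, relative version over \(\Spec A=\varprojlim\Spec A_\mu\)), write \(X=\varprojlim X_\lambda\) with \(X_\lambda\) of finite type over \(A_\mu\) and affine transition maps. Lemma~\ref{approximation-of-ample-vector-bundles}(1),(2) gives an ample vector bundle \(E_\lambda\) of rank \(r\) on \(X_\lambda\) with pullback \(E\). Corollary~\ref{representations-of-ample-1}, applied over the noetherian ring \(A_\mu\), shows that \(E_\lambda(W_{\mu})\) is ample. Since the associated-bundle construction commutes with base change, its pullback to \(X\) is \(E(W)\), which is ample by Lemma~\ref{approximation-of-ample-vector-bundles}(3).

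The step I expect to be the main obstacle is the descent of \(W\) together with all its structure (comodule, polynomial, graded with \(W_0=0\), positive rank) to a finitely generated subring. This requires care that the approximations of \(X\) and of \(W\) are compatible, i.e.\ that the \(X_\lambda\) lie over \(\Spec A_\mu\). I would handle this by first fixing \(\mu\) for \(W\) and then approximating \(X\) as a limit of finite type \(A_{\mu'}\)-schemes for \(\mu'\ge\mu\).
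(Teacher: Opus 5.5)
Your proposal is correct and follows essentially the paper's argument. You localize to an affine $\Spec R\subseteq S$, approximate $X$, $E$ and $W$ noetherianly, apply Corollary~\ref{representations-of-ample-1} (or Corollary~\ref{ample-tensor-product}) at a finite stage, and pull back via Lemma~\ref{approximation-of-ample-vector-bundles}(3). The differences are minor. The paper proves (1) directly by approximation plus Corollary~\ref{ample-tensor-product}, rather than through $\bigwedge^2(E_1\oplus E_2)$ and (2). It also sidesteps your compatibility concern by first writing $(X_R\to\Spec R)$ as a limit of $(X_\lambda\to\Spec R_\lambda)$ via relative noetherian approximation, and only then descending $W$ along $R=\varinjlim R_\lambda$.
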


\begin{proof}
Let \(\Spec R\subseteq S\) be an affine open subset.  The restrictions to \(X_R\) of the \(f\)-ample vector bundles under consideration are ample. Moreover, Lemma~\ref{relatively-ample-implies-qcqs} shows that \(X_R\) is
 quasi-compact and separated.
 So by relative noetherian approximation
 	\cite[Lemma~\texttt{0GS1}]{Stacks}, we may write
 	\[
 	\bigl(X_R\longrightarrow\Spec R\bigr)
 	\simeq
 	\varprojlim_{\lambda\in\Lambda}
 	\bigl(X_\lambda\longrightarrow S_\lambda\bigr),
 	\]
 	where all transition morphisms are affine and the schemes
 	\(X_\lambda\) and \(S_\lambda\) are of finite type over
 	\(\mathbb Z\). Since \(\Spec R\) is affine, after passing to a
 	cofinal subsystem we may assume that every \(S_\lambda\) is affine;
 	see \cite[Lemma~\texttt{01Z6}]{Stacks}. Write
 	$ S_\lambda=\Spec R_\lambda.$
 	Thus \(R_\lambda\) is noetherian and \(X_\lambda\) is of finite type
 	over \(R_\lambda\). Denote the canonical projections by
 	\[
 	g_\lambda\colon X_R\longrightarrow X_\lambda,
 	\qquad
 	h_\lambda\colon\Spec R\longrightarrow\Spec R_\lambda.
 	\]
 	
 	We first prove (1). Applying
 	Lemma~\ref{approximation-of-ample-vector-bundles} to
 	\((E_1)_R\) and \((E_2)_R\), and then passing to a common larger
 	index, we obtain ample vector bundles \(E_{1,\lambda}\) and
 	\(E_{2,\lambda}\) on \(X_\lambda\) such that
 	$
 	(E_i)_R\simeq g_\lambda^*E_{i,\lambda}$ for $ i=1,2.$
 	Since \(X_\lambda\) is of finite type over the noetherian ring
 	\(R_\lambda\), Corollary~\ref{ample-tensor-product} shows that
 	$	E_{1,\lambda}\otimes E_{2,\lambda}$
 	is ample. Consequently,
 	$(E_1\otimes E_2)_R
 	\simeq
 	g_\lambda^*
 	\bigl(E_{1,\lambda}\otimes E_{2,\lambda}\bigr)$
 	is ample by
 	Lemma~\ref{approximation-of-ample-vector-bundles}. Since this holds
 	over every affine open subset of \(S\), the vector bundle
 	\(E_1\otimes E_2\) is \(f\)-ample.
 	
 	We now prove (2). Write the homogeneous decomposition
 	$
 	W_R=\bigoplus_{d=1}^{N}W_{R,d}.$
 	The finite locally free modules \(W_{R,d}\), the homogeneous
 	coaction maps defining their polynomial
 	\(\GL_{r,R}\)-actions, and the identities satisfied by these maps
 	are finite-presentation data. Hence, after increasing \(\lambda\),
 	they descend by
 	\cite[Lemmas~\texttt{01ZR} and~\texttt{0B8W}]{Stacks} to a finite
 	locally free polynomial \(\GL_{r,R_\lambda}\)-module
 	\[
 	W_\lambda
 	=
 	\bigoplus_{d=1}^{N}W_{\lambda,d}
 	\]
 	such that 	$	W_R\simeq h_\lambda^*W_\lambda $ and 	$(W_\lambda)_0=0.$
 	At the same time,
 	Lemma~\ref{approximation-of-ample-vector-bundles} allows us to
 	arrange that	$	E_R\simeq g_\lambda^*E_\lambda$,
 	where \(E_\lambda\) is an ample vector bundle of rank \(r\) on
 	\(X_\lambda\). After increasing \(\lambda\) once more, we may also
 	assume that \(W_\lambda\) has positive rank at every point of
 	\(\Spec R_\lambda\); the assertions about the ranks follow from
 	\cite[Lemma~\texttt{05F4}]{Stacks}.
 	
 	Corollary~\ref{representations-of-ample-1}, applied over the
 	noetherian ring \(R_\lambda\), shows that
 	\(E_\lambda(W_\lambda)\) is ample. Since the associated-bundle
 	construction commutes with base change,
 	\[
 	E(W)|_{X_R}
 	\simeq
 	E_R(W_R)
 	\simeq
 	g_\lambda^*\bigl(E_\lambda(W_\lambda)\bigr).
 	\]
 	The last vector bundle is ample by
 	Lemma~\ref{approximation-of-ample-vector-bundles}. Since this holds
 	over every affine open subset of \(S\), \(E(W)\) is \(f\)-ample.
 	
 Finally, (3) follows from (2) by taking $W=\Sym_S^n(\cO_S^{\oplus r})$, $W=\Gamma^n(\cO_S^{\oplus r})$ and
 $W=\bigwedge\nolimits_S^n(\cO_S^{\oplus r})$,
 respectively.
 \end{proof}

\section{A nonample extension of ample bundles}

In this section we prove Theorem~\ref{extension-of-ample}. Then we give a related
example concerning the detection of ampleness from an open complement
and infinitesimal neighbourhoods of its boundary.

\begin{proof}[Proof of Theorem~\ref{extension-of-ample}]
	Let
	$ f\colon X:= \mathbb P_{\mathbb P^1} \bigl(\mathcal O_{\mathbb P^1}\oplus
	\mathcal O_{\mathbb P^1}(-2)\bigr) \longrightarrow\mathbb P^1$
	and let \(C\subset X\) be the negative section. Choose a nonzero
	extension
	\[
	0\longrightarrow\mathcal O_X(C)
	\longrightarrow G
	\longrightarrow\mathcal O_X
	\longrightarrow0.
	\]
	As observed in \cite[Section~4]{Ejiri-Fujino-Iwai},
	$G|_C\simeq\mathcal O_{\mathbb P^1}(-1)^{\oplus2}.$
	Let \(F\) be a fiber of \(f\) and let $s$ denote the canonical section of $\mathcal O_X(C)$. Let us set
	\[
	U:=X_s=X\setminus C,
	\qquad
	L:=\mathcal O_X(C+F)|_U,
	\qquad
	E:=\bigl(G\otimes\mathcal O_X(C+F)\bigr)|_U.
	\]
	Since \(\mathcal O_X(C)|_U\simeq\mathcal O_U\), restriction gives
	\[
	0\longrightarrow L\longrightarrow E\longrightarrow L\longrightarrow0.
	\]
	Moreover,
	\[
	U\simeq
	\mathbb V_{\mathbb P^1}\bigl(\mathcal O_{\mathbb P^1}(-2)\bigr),
	\qquad
	L\simeq(f|_U)^*\mathcal O_{\mathbb P^1}(1).
	\]
	Since \(f|_U\) is affine and pullback by an affine morphism
	preserves ampleness (see \cite[Lemma \texttt{0892}]{Stacks}), \(L\) is ample.
	
	We claim that \(E\) is not ample. Fix \(n\geq1\), and, for \(l\geq0\),
	set
	\[
	W_l:=
	\left(\operatorname{Sym}^nG\right)
	\otimes\mathcal O_X\bigl(lC+(n-1)F\bigr).
	\]
	The quotient \(G\twoheadrightarrow\mathcal O_X\) induces a morphism
	\[
	\sigma_l\colon
	W_l\longrightarrow
	\mathcal O_X\bigl(lC+(n-1)F\bigr).
	\]
	
	\begin{lemma}
		The map $H^0(\sigma _{l})$ vanishes for all $l\ge 0$.
	\end{lemma}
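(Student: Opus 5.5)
The approach is to exploit the negativity of $G|_C$: every global section of $W_l$ vanishes along $C$, so it is divisible by the canonical section $s$. Peeling off one factor of $s$ more than the twist $lC$ lands in a line bundle with no global sections.

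\emph{Step 1: restriction to $C$.} Multiplication by $s$ gives exact sequences
\[
0\longrightarrow W_{k-1}\xrightarrow{\ \cdot s\ } W_k\longrightarrow W_k|_C\longrightarrow 0 ,
\]
valid for every integer $k$, where we extend the definition of $W_k$ to $k<0$. Since the quotient $G\twoheadrightarrow\cO_X$ is $\cO_X$-linear, these sequences are compatible with the maps $\sigma_k$. That is, $\sigma_k(s\,t)=s\,\sigma_{k-1}(t)$ for local sections $t$ of $W_{k-1}$.

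Using $G|_C\simeq\cO_{\PP^1}(-1)^{\oplus 2}$, $C^2=-2$ and $C\cdot F=1$, we compute
\[
W_k|_C\simeq \operatorname{Sym}^n\bigl(\cO_{\PP^1}(-1)^{\oplus2}\bigr)\otimes\cO_{\PP^1}(-2k+n-1)\simeq \cO_{\PP^1}(-1-2k)^{\oplus(n+1)} .
\]
For $k\ge 0$ this bundle has no nonzero global sections. Hence every section of $W_k$ with $k\ge0$ is $s$ times a section of $W_{k-1}$.

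\emph{Step 2: iterate.} Let $t\in H^0(X,W_l)$. Applying Step 1 successively for $k=l,l-1,\dots,0$, which is $l+1$ times in total, we obtain $t=s^{l+1}t'$ with $t'\in H^0\bigl(X,W_{-1}\bigr)$. By the compatibility in Step 1,
\[
\sigma_l(t)=s^{l+1}\,\sigma_{-1}(t'),\qquad \sigma_{-1}(t')\in H^0\bigl(X,\cO_X(-C+(n-1)F)\bigr).
\]

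\emph{Step 3: vanishing.} It remains to see that $H^0\bigl(X,\cO_X(-C+(n-1)F)\bigr)=0$. This is clear: the restriction of this line bundle to any fiber $F\simeq\PP^1$ has degree $-C\cdot F=-1$, so $f_*\cO_X(-C+(n-1)F)=0$ and the bundle has no global sections. Therefore $\sigma_l(t)=0$, and so $H^0(\sigma_l)=0$ for all $l\ge0$.

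The only delicate point is the computation of $W_k|_C$, which relies on the Ejiri--Fujino--Iwai isomorphism $G|_C\simeq\cO(-1)^{\oplus 2}$. This is exactly what makes every section vanish along $C$ once more than the twist $lC$ alone would force. The rest is bookkeeping with the exact sequences.
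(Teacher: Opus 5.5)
Your proof is correct and follows essentially the same route as the paper: both use $H^0(C,W_k|_C)=0$ to divide sections of $W_k$ by $s$ and reduce to a small twist. The only difference is packaging. The paper stops at $W_0$ and uses injectivity of the restriction $H^0(X,\cO_X((n-1)F))\to H^0(C,\cO_C((n-1)F))$, obtained from $f_*\cO_X=\cO_{\PP^1}$ and $f|_C$ being an isomorphism. You instead peel off one more factor of $s$ and use $H^0(X,\cO_X(-C+(n-1)F))=0$, which is exactly the kernel of that restriction map.
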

	
	\begin{proof}
		Since
		\[
		W_l|_C	\simeq
		\operatorname{Sym}^n
		\bigl(\mathcal O_{\mathbb P^1}(-1)^{\oplus2}\bigr)
		\otimes
		\mathcal O_{\mathbb P^1}(-2l+n-1)\simeq
		\mathcal O_{\mathbb P^1}(-2l-1)^{\oplus(n+1)}
		\]
		we have $H^0(C,W_l|_C)=0$ for $l\geq 0$.  Therefore the exact sequence
		\[
		0\longrightarrow W_{l-1}
		\mathop{\longrightarrow}^s W_l
		\longrightarrow W_l|_C
		\longrightarrow0
		\] 
		gives an isomorphism
		$H^0(X,W_{l-1})\xrightarrow{\ \sim\ }H^0(X,W_l)$ for \(l\geq1\).
		
		Let  us consider the commutative diagram
		$$\xymatrix{
			W_0\ar[rr]^{\sigma _0}\ar[d]^{\alpha_l}& & \cO_X((n-1)F)\ar[d]^{\beta_l}\\
			W_{l}\ar[rr]^-{\sigma _{l}}&& \cO_X(lC+(n-1)F)\\
		} $$ 
		in which the vertical maps are multiplication by $s^l$.  Since the map $H^0(\alpha_l)$
		is an isomorphism, it is sufficient to prove that $H^0(\sigma _{0})=0$.  
		
		Restriction to \(C\) gives a commutative diagram
		\[
		\xymatrix{
			H^0(X,W_0)\ar[r]^-{H^0(\sigma_0)}\ar[d]
			&
			H^0\bigl(X,\mathcal O_X((n-1)F)\bigr)\ar[d]\\
			H^0(C,W_0|_C)\ar[r]
			&
			H^0\bigl(C,\mathcal O_C((n-1)F)\bigr).
		}
		\]
		The right vertical arrow is an isomorphism because
		\(f|_C\colon C\to\mathbb P^1\) is an isomorphism and $$H^0\bigl(X,\mathcal O_X((n-1)F)\bigr)=H^0\bigl(\PP^1,\mathcal O_{\PP^1}(n-1))$$
		by $f_*\cO_X=\cO_{\PP^1}$.
		Since $ H^0(C,W_0|_C)=0$ we have \(H^0(\sigma_0)=0\), and therefore \(H^0(\sigma_l)=0\) for every
		\(l\geq0\).
	\end{proof}
	
	Put $\overline E:=G\otimes\mathcal O_X(C+F)$ and
	$\overline L:=\mathcal O_X(C+F)$. For every $n\geq1$, set
	\[
	\mathcal A_n:=
	\operatorname{Sym}^n\overline E\otimes\overline L^{-1},
	\qquad
	\mathcal B_n:=\overline L^{n-1},
	\]
	and let
	\[
	\rho_n\colon\mathcal A_n\twoheadrightarrow\mathcal B_n
	\]
	be the quotient induced by
	$\operatorname{Sym}^n\overline E\twoheadrightarrow\overline L^n$.
	For every $m\geq0$, we have
	\[
	\mathcal A_n(mC)
	\simeq
	\operatorname{Sym}^nG\otimes
	\mathcal O_X\bigl((n-1+m)C+(n-1)F\bigr)
	=W_{n+m-1}
	\]
	and
	\[
	\mathcal B_n(mC)
	\simeq
	\mathcal O_X\bigl((n-1+m)C+(n-1)F\bigr).
	\]
	Under these identifications,
	$\rho_n\otimes\mathcal O_X(mC)$ is the map
	$\sigma_{n+m-1}$. Hence the lemma shows that it induces the zero map
	on global sections for every $m\geq0$.
	
 Since	$U=X_{s}$ and $X$ is quasi-compact and quasi-separated,
	\cite[Lemma \texttt{01PW}]{Stacks} gives, functorially in every
	quasi-coherent sheaf $\mathcal H$ on $X$,
	\[
	H^0(U,\mathcal H|_U)
	\simeq
	\varinjlim_{m\geq0}H^0\bigl(X,\mathcal H(mC)\bigr),
	\]
	where the transition maps are multiplication by $s$. Applying this
	to $\mathcal A_n$ and $\mathcal B_n$ and passing to the direct limit,
	we obtain that
	\[
	H^0\bigl(U,\operatorname{Sym}^nE\otimes L^{-1}\bigr)
	\longrightarrow
	H^0(U,L^{n-1})
	\]
	is zero for every $n\geq1$.
	
	If $\operatorname{Sym}^nE\otimes L^{-1}$ were globally generated, then
	the composition
	\[
	H^0\bigl(U,\operatorname{Sym}^nE\otimes L^{-1}\bigr)
	\otimes\mathcal O_U
	\longrightarrow
	\operatorname{Sym}^nE\otimes L^{-1}
	\twoheadrightarrow
	L^{n-1} 
	\]
	would be surjective. But this composition is zero by the preceding
	paragraph, a contradiction. Thus
	$\operatorname{Sym}^nE\otimes L^{-1}$ is not globally generated for
	any $n\geq1$. So by
	Proposition~\ref{equivalent-definitions}, we conclude that $E$ is not
	ample. Since \(X\) is a smooth projective surface, \(U\) is smooth quasi-projective, so taking \(S:=U\) proves the theorem.
\end{proof}

\medskip

\begin{example}\label{example:funny}
	The following example shows that ampleness on a nonproper variety
	cannot be detected from the complement of a divisor together with all
	the infinitesimal neighbourhoods of that divisor.	
	
	Let us consider $S$, $E$ and $L$ as in Theorem \(\ref{extension-of-ample}\) and set
	$ \pi:Y=\mathbf P_S(E)\to S$ and $ A=\mathcal O_Y(1). $
	Although \(\pi\) is projective, \(Y\) is a smooth nonproper threefold. The quotient \(E\twoheadrightarrow L\) determines a smooth Cartier divisor \(D\simeq S\), and
	$ A|_D\simeq L $
	is ample. If \(U=Y\setminus D\), the inclusion \(L\hookrightarrow E\) gives
	$ A|_U\simeq (\pi|_U)^*L. $
	Since \(U\to S\) is an affine \(\mathbf A^1\)-bundle, \(A|_U\) is ample by  \cite[Lemma \texttt{0892}]{Stacks}.
	Moreover, if \(D_n\) is any infinitesimal neighbourhood of \(D\), then
	$ A|_{D_n}$ is ample for every $n$, 
	because ampleness is unchanged by nilpotent thickening. Nevertheless \(A\) is not ample on \(Y\), since \(E\) is not ample.
	
	Thus, on a smooth nonproper variety $Y$,  ampleness of \(A\) cannot
	be recovered even from all of the following:
	\begin{enumerate}
		\item 
		ampleness on the open complement $Y\setminus D$,
		\item 
		ampleness on the smooth boundary $D$, 
		\item 
		ampleness on every finite-order infinitesimal thickening of that boundary.
	\end{enumerate}
\end{example}

\appendix\label{sec:polynomial-quotients}

\section{Polynomial representations of general linear groups}

In this appendix we recall the facts about polynomial representations of
general linear groups that are used in the paper. Throughout, \(R\) is a
ring, \(V=R^r\), and \(G=\operatorname{GL}(V)\). This is
sufficient for our relative applications: after passing to a suitable
affine open cover of the base, the relevant vector bundles become free,
and the constructions below are compatible with localization. Thus the
resulting statements apply locally on the base to polynomial
\(\GL_{r,S}\)-modules over an arbitrary scheme \(S\).

\medskip

Let $M$ be a finite $R$-module. Let us recall (see \cite[Part II, Chapter A]{Jantzen}) that a $G$-module $M$ is
\emph{polynomial} if its $G$-action extends to an action of the algebraic
monoid $\operatorname{End}_R(V)$. Equivalently, the corresponding coaction
is a homomorphism
\[
M\longrightarrow
M\otimes_R\operatorname{Sym}_R\bigl(\operatorname{End}_R(V)^*\bigr).
\]
A polynomial $G$-module $M$ is \emph{homogeneous of degree $d$} if this
coaction has image in
\[
M\otimes_R
\operatorname{Sym}_R^d\bigl(\operatorname{End}_R(V)^*\bigr).
\]
If $M$ is finite locally free, then, after choosing bases locally, this is
equivalent to requiring that the matrix coefficients of the representation
be homogeneous polynomials of degree $d$ in the matrix entries.

For \(d\geq 0\), we  denote by
\(\mathcal P_R(r,d)\) the category of finite homogeneous polynomial
\(G\)-modules of degree \(d\), and let
\[
\Lambda^+(r,d)
:=
\left\{
\lambda=(\lambda_1,\ldots,\lambda_r)\in\mathbb Z_{\geq 0}^r
\ \middle|\
\lambda_1\geq\lambda_2\geq\cdots\geq\lambda_r
\ \text{and}\
\sum_{i=1}^r\lambda_i=d
\right\}
\]
denote the set of partitions of \(d\) with at most \(r\) nonzero
parts. We allow trailing zeroes and use the convention
\(\Gamma^0V=R\). For \(\lambda\in\Lambda^+(r,d)\) we write
\[
\Gamma^\lambda V
:=
\bigotimes_{i=1}^r\Gamma^{\lambda_i}V
\]
where the divided powers $\Gamma^{\lambda_i}V$ were defined in Subsection \ref{subsection:divided-powers}. 

The following proposition is the characteristic-free form of the
observation used by Totaro \cite[pp.~2 and 5]{Totaro}; the Schur-algebra
description used there goes back to Akin and Buchsbaum
\cite{Akin-Buchsbaum}.

\begin{proposition}
	\label{prop:divided-power-generators}
	Let \(M\in\mathcal P_R(r,d)\). Then there exist nonnegative integers
	\(m_\lambda\), indexed by \(\lambda\in \Lambda^+(r,d)\), 
	such that we have a \(G\)-equivariant surjection
	\[
	\bigoplus_{\lambda\in\Lambda^+(r,d)}
	\bigl(\Gamma^\lambda V\bigr)^{\oplus m_\lambda}
	\twoheadrightarrow M.
	\]
\end{proposition}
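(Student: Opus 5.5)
The plan is to reduce to the universal case via the Schur algebra. Let \(S_R(r,d):=\bigl(\Sym^d_R(\End_R(V)^*)\bigr)^*\simeq\Gamma^d\End_R(V)\) denote the Schur algebra. Categories \(\mathcal P_R(r,d)\) and finite \(S_R(r,d)\)-modules are identified, since a degree-\(d\) coaction \(M\to M\otimes\Sym^d(\End(V)^*)\) is the same as an action of the dual coalgebra's dual algebra \(\Gamma^d\End(V)\); this is standard and compatible with base change because everything in sight is finite free.

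First, I will use the decomposition of \(S_R(r,d)\) under the maximal torus \(T\subset G\) of diagonal matrices. The torus action splits any \(M\in\mathcal P_R(r,d)\) into weight spaces \(M=\bigoplus_{\mu}M_\mu\), with \(\mu\) ranging over compositions of \(d\) into \(r\) parts. Correspondingly, \(S_R(r,d)\) contains orthogonal idempotents \(\xi_\mu\), the images of \(\gamma_d\) applied to suitable diagonal elements, or equivalently the weight projectors, with \(\sum_\mu\xi_\mu=1\) and \(\xi_\mu M=M_\mu\).

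The key identification, due to Akin–Buchsbaum and Green, is
\[
S_R(r,d)\,\xi_\mu\simeq\Gamma^\mu V:=\bigotimes_i\Gamma^{\mu_i}V
\]
as left \(S_R(r,d)\)-modules, i.e. as \(G\)-modules. I would verify this via \(\Gamma^d(V\otimes V^*)\) and the weight-\(\mu\) part in the \(V^*\)-factor: \(\Gamma^d(V\otimes V^*)\) decomposes under the right torus as \(\bigoplus_\mu \bigotimes_i\Gamma^{\mu_i}(V\otimes v_i^*)\). This shows that \(\Gamma^\mu V\) is projective and represents the functor \(M\mapsto M_\mu\), i.e. \(\Hom_G(\Gamma^\mu V,M)\simeq M_\mu\). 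Concretely, the generator \(\gamma_{\mu_1}(v_1)\otimes\cdots\otimes\gamma_{\mu_r}(v_r)\) is sent to a chosen weight vector \(m\in M_\mu\).

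Next comes surjectivity. Since \(M\) is a finite \(R\)-module, each \(M_\mu\) is finite, and I choose generators \(m_{\mu,1},\dots,m_{\mu,k_\mu}\). The corresponding maps \(\Gamma^\mu V\to M\) have image containing \(M_\mu\), so their sum over all \(\mu\) and all generators is surjective, since \(M=\bigoplus M_\mu\). Finally, because \(\Gamma^\mu V\simeq\Gamma^{w\mu}V\) for any permutation \(w\) of the factors (tensor products commute), each composition may be replaced by its sorted partition \(\lambda\in\Lambda^+(r,d)\). Collecting multiplicities gives the integers \(m_\lambda\).

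The main obstacle is the representability statement \(\Hom_G(\Gamma^\mu V,M)\simeq M_\mu\) over an arbitrary ring \(R\), which requires identifying polynomial modules with Schur algebra modules without field hypotheses. I would establish it directly. A \(G\)-map out of \(\Gamma^\mu V\) is determined by the image of the generator, because \(\Gamma^\mu V\) is generated as an \(S_R(r,d)\)-module by that vector; this follows from the explicit description \(\Gamma^\mu V=S_R(r,d)\xi_\mu\). Conversely, any weight vector extends to such a map by applying the action map \(S_R(r,d)\otimes M\to M\) to \(\xi_\mu\otimes m\). Both steps reduce to \(R=\mathbb Z\) by base change of finite free modules.
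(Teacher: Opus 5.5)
Your proposal is correct and follows the paper's route: identify $\mathcal P_R(r,d)$ with finite modules over $S_R(r,d)\simeq\Gamma^d(V\otimes_R V^*)$, and decompose the regular module as $\bigoplus_\mu\Gamma^\mu V$ using $V\otimes V^*\simeq V^{\oplus r}$. The only difference is the last step: the paper just takes a surjection $S_R(r,d)^{\oplus N}\twoheadrightarrow M$, whereas you use the weight idempotents $\xi_\mu=\gamma_{\mu_1}(E_{11})\cdots\gamma_{\mu_r}(E_{rr})$ (a product in $\Gamma(\End_R V)$, not $\gamma_d$ of a single diagonal matrix) and $\Hom_G(\Gamma^\mu V,M)\simeq\xi_\mu M=M_\mu$, which gives sharper multiplicities (generators of the weight spaces $M_\mu$) than the statement requires.
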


\begin{proof}
	Set
	\[
	C_R(r,d):=
	\operatorname{Sym}_R^d\bigl(\operatorname{End}_R(V)^*\bigr).
	\]
	This is a finite free $R$-coalgebra. Hence the category
	$\mathcal P_R(r,d)$ is equivalent to the category of finite left modules
	over the Schur algebra
	\[
	S_R(r,d):=C_R(r,d)^*
	\simeq\Gamma^d\bigl(\operatorname{End}_R(V)\bigr).
	\]
	
	Consider $S_R(r,d)$ as a left module over itself. The corresponding
	$G$-action on 
	$\Gamma^d(\operatorname{End}_R(V))$ is induced by left multiplication on
	$\operatorname{End}_R(V)$. After choosing a basis of $V$, we have an
	isomorphism of $G$-modules
	\[
	\operatorname{End}_R(V)
	=V\otimes_RV^*
	\simeq V^{\oplus r}.
	\]
	The divided-power decomposition of a direct sum therefore gives
	\[
S_R(r,d)\simeq \Gamma^d\bigl(V^{\oplus r}\bigr)\simeq
		\bigoplus_{\substack{\alpha=(\alpha_1,\ldots,\alpha_r)
				\in\mathbb Z_{\geq0}^r\\
				\alpha_1+\cdots+\alpha_r=d}}
		\Gamma^{\alpha_1}V\otimes_R\cdots\otimes_R\Gamma^{\alpha_r}V.
	\]
	After reordering the tensor factors, every summand in this decomposition
	is isomorphic to $\Gamma^\lambda V$ for a unique
	$\lambda\in\Lambda^+(r,d)$. Consequently, there exist positive integers
	$c_\lambda$ such that
	\[
	S_R(r,d)\simeq
	\bigoplus_{\lambda\in\Lambda^+(r,d)}
	\bigl(\Gamma^\lambda V\bigr)^{\oplus c_\lambda}
	\]
	as polynomial $G$-modules.
	
	Since $M$ is finite as an $R$-module, it is finitely generated as an
	$S_R(r,d)$-module. Hence, for some $N\geq0$, there is an
	$S_R(r,d)$-linear surjection
	\[
	S_R(r,d)^{\oplus N}\twoheadrightarrow M.
	\]
	Using the preceding decomposition of the regular Schur module gives the
	required $G$-equivariant surjection.
\end{proof}

\begin{corollary}
	\label{cor:classical-operations-generate}
	Let \(\mathcal C(V)\) be the smallest class of finite polynomial
	\(G\)-modules containing all divided powers \(\Gamma^nV\), \(n\geq1\),
	and closed under finite direct sums, tensor products, quotients, and
	isomorphisms. Then the nonzero objects of \(\mathcal C(V)\) are precisely
	the nonzero finite polynomial \(G\)-modules whose homogeneous component
	of degree zero vanishes.
\end{corollary}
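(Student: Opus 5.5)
We argue the two inclusions separately.

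First we show that every nonzero object of \(\mathcal C(V)\) has vanishing degree-zero component. A polynomial \(G\)-module \(M\) splits canonically as \(M=\bigoplus_d M_d\) into homogeneous components. The homogeneous polynomial modules of degree \(d\) are exactly those on which the central torus \(\mathbb G_m\subset G\) acts through the character \(t\mapsto t^d\). Hence the operations interact with degrees as follows:
\[
(M\oplus N)_d=M_d\oplus N_d,\qquad (M\otimes N)_d=\bigoplus_{a+b=d}M_a\otimes N_b .
\]
A \(G\)-equivariant surjection \(M\twoheadrightarrow M'\) respects the weight decomposition for the central \(\mathbb G_m\), so \(M_d\twoheadrightarrow M'_d\) for every \(d\). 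Each generator \(\Gamma^nV\) with \(n\geq1\) is homogeneous of degree \(n>0\). Consider the class of finite polynomial modules with \(M_0=0\). It contains the generators and is stable under the four operations: for tensor products, \((M\otimes N)_0=M_0\otimes N_0=0\); for quotients, \(M'_0\) is a quotient of \(M_0=0\); sums and isomorphisms are clear. Therefore this class contains \(\mathcal C(V)\). Nonzero-ness is part of the statement, so nothing further is needed in this direction.

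Conversely, let \(M\neq 0\) be a finite polynomial \(G\)-module with \(M_0=0\). Since \(M\) is finite, only finitely many \(M_d\) are nonzero, and each nonzero one lies in \(\mathcal P_R(r,d)\) with \(d>0\). It then suffices to show that each such \(M_d\) lies in \(\mathcal C(V)\), because \(M=\bigoplus_{d>0}M_d\) and \(\mathcal C(V)\) is closed under finite sums. By Proposition~\ref{prop:divided-power-generators}, \(M_d\) is a quotient of \(\bigoplus_{\lambda}(\Gamma^\lambda V)^{\oplus m_\lambda}\) with \(\lambda\in\Lambda^+(r,d)\). Since \(d>0\), each \(\lambda\) has at least one positive part. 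Omitting the factors \(\Gamma^0V=R\), we see that \(\Gamma^\lambda V\) is isomorphic to a tensor product of divided powers \(\Gamma^{\lambda_i}V\) with \(\lambda_i\geq1\), and hence lies in \(\mathcal C(V)\). Closure under finite sums and quotients then gives \(M_d\in\mathcal C(V)\).

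The only genuinely subtle step is the claim that surjections and tensor products respect the homogeneous decomposition over an arbitrary ring \(R\). The plan is to justify this via the central \(\mathbb G_m\)-weights: for a diagonalizable group the weight decomposition is functorial and exact over any base. An alternative is to work directly with the coaction into \(\bigoplus_d M\otimes C_R(r,d)\). Two smaller points need handling: the direct sum in the converse is over finitely many \(d\), and the summands \(m_\lambda=0\) must be discarded. The latter causes no difficulty when \(M_d\neq0\), since we only apply the construction to nonzero \(M_d\), and for those the source of the surjection is a nonzero module in \(\mathcal C(V)\).
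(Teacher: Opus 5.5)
Your proposal is correct and follows essentially the same route as the paper. For the first direction you use the degree decomposition given by the scalar action of \(t\,\id_V\), i.e.\ the central \(\mathbb G_m\)-weights, and for the converse you apply Proposition~\ref{prop:divided-power-generators} to each nonzero \(M_d\) with \(d>0\), dropping the factors \(\Gamma^0V=R\); your extra remarks on exactness of the weight decomposition only spell out details that the paper leaves implicit.
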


\begin{proof}
	Every finite polynomial $G$-module has a canonical decomposition
	\[
	M=\bigoplus_{d\geq0}M_d
	\]
	into homogeneous polynomial $G$-modules, where the scalar matrix
	$t\operatorname{id}_V$ acts on $M_d$ as multiplication by $t^d$. Only
	finitely many of the $M_d$ are nonzero, and $M_0$ is a trivial
	$G$-module.
	
	Suppose first that $M_0=0$. By
	Proposition~\ref{prop:divided-power-generators}, every homogeneous
	component $M_d$, $d>0$, is a quotient of a finite direct sum of modules
	$\Gamma^\lambda V$. Every such module belongs to $\mathcal C(V)$, since
	in its definition one may omit the factors $\Gamma^{\lambda_i}V$ for
	which $\lambda_i=0$. Hence $M$ belongs to $\mathcal C(V)$.
	
	Conversely, every generator $\Gamma^nV$, $n\geq1$, is homogeneous of
	positive degree. Finite direct sums, tensor products, quotients, and
	isomorphisms preserve the vanishing of the degree-zero component.
	Therefore every nonzero object of $\mathcal C(V)$ has $M_0=0$.
\end{proof}

If $M$ is finite locally free over $R$, the condition $M_0=0$ is
equivalent to requiring that, for every geometric point
$\bar{s}\to\Spec R$, the polynomial
$\GL_{r,\kappa(\bar{s})}$-module $M_{\bar{s}}$ has no trivial
composition factor.

\section*{Declaration on the use of generative AI} 

During the development and preparation of this article, the author used ChatGPT (OpenAI; GPT-5.6 Sol, accessed August 2026) extensively as an interactive tool to explore proof strategies, including the arguments concerning divided powers, the counterexample, and the extension to arbitrary bases, and to assist with drafting and revision. The author independently checked all mathematical arguments and references, rewrote the generated material, and assumes full responsibility for the contents of the article.

\section*{Acknowledgements}

The author was partially supported by the National Science Centre, Poland, contract number 2025/59/B/ST1/02168.

\bibliographystyle{amsalpha}
\bibliography{References}

\providecommand{\bysame}{\leavevmode\hbox to3em{\hrulefill}\thinspace}
\providecommand{\MR}{\relax\ifhmode\unskip\space\fi MR }
\providecommand{\MRhref}[2]{%
  \href{http://www.ams.org/mathscinet-getitem?mr=#1}{#2}
}
\providecommand{\href}[2]{#2}
\begin{thebibliography}{FvdK10}

\bibitem[AB88]{Akin-Buchsbaum}
Kaan Akin and David~A. Buchsbaum, \emph{Characteristic-free representation
  theory of the general linear group. {II}. {H}omological considerations},
  Advances in Mathematics \textbf{72} (1988), no.~2, 171--210.

\bibitem[Bar71]{Barton}
Charles~M. Barton, \emph{Tensor products of ample vector bundles in
  characteristic {$p$}}, American Journal of Mathematics \textbf{93} (1971),
  no.~2, 429--438.

\bibitem[Ben10]{Benoist-relative-ampleness}
Olivier Benoist, \emph{Answer to ``relatively ample line bundles''},
  MathOverflow, 2010, \url{https://mathoverflow.net/a/28018} Posted June 13,
  2010.

\bibitem[DCP76]{DeConcini-Procesi}
Corrado De~Concini and Claudio Procesi, \emph{A characteristic free approach to
  invariant theory}, Advances in Mathematics \textbf{21} (1976), no.~3,
  330--354.

\bibitem[DF04]{DomokosFrenkel}
M{\'a}ty{\'a}s Domokos and P{\'e}ter~E. Frenkel, \emph{On orthogonal invariants
  in characteristic 2}, Journal of Algebra \textbf{274} (2004), no.~2,
  662--688.

\bibitem[EFI24]{Ejiri-Fujino-Iwai}
Sho Ejiri, Osamu Fujino, and Masataka Iwai, \emph{Positivity of extensions of
  vector bundles}, Mathematische Zeitschrift \textbf{306} (2024), Paper No. 47,
  8 pp.

\bibitem[FvdK10]{FvdK}
Vincent Franjou and Wilberd van~der Kallen, \emph{Power reductivity over an
  arbitrary base}, Documenta Mathematica \textbf{Extra Volume} (2010),
  171--195, Andrei A. Suslin's sixtieth birthday.

\bibitem[Gro61]{EGA2}
Alexander Grothendieck, \emph{{\'E}l\'ements de g\'eom\'etrie alg\'ebrique.
  {II}. {{\'E}}tude globale \'el\'ementaire de quelques classes de morphismes},
  Publications Math\'ematiques de l'IH\'ES \textbf{8} (1961), 5--222,
  R\'edig\'e avec la collaboration de Jean Dieudonn\'e.

\bibitem[Har66]{Hartshorne-Ample}
Robin Hartshorne, \emph{Ample vector bundles}, Publications Math{\'e}matiques
  de l'Institut des Hautes {\'E}tudes Scientifiques \textbf{29} (1966), 63--94.

\bibitem[Has05]{Hashimoto}
Mitsuyasu Hashimoto, \emph{Another proof of theorems of {De Concini} and
  {Procesi}}, Journal of Mathematics of Kyoto University \textbf{45} (2005),
  no.~4, 701--710.

\bibitem[Jan03]{Jantzen}
Jens~Carsten Jantzen, \emph{Representations of algebraic groups}, 2nd ed.,
  Mathematical Surveys and Monographs, vol. 107, American Mathematical Society,
  Providence, RI, 2003.

\bibitem[Lan21]{Langer-Moduli-Lie-algebroids}
Adrian Langer, \emph{Moduli spaces of semistable modules over {L}ie
  algebroids}, 2021, \url{https://arxiv.org/abs/2107.03128}.

\bibitem[MFK94]{Mumford-Fogarty-Kirwan}
David Mumford, John Fogarty, and Frances Kirwan, \emph{Geometric invariant
  theory}, 3rd ed., Ergebnisse der Mathematik und ihrer Grenzgebiete. 2. Folge,
  vol.~34, Springer-Verlag, Berlin, 1994.

\bibitem[Ser68]{Ser68}
Jean-Pierre Serre, \emph{Groupe de {Grothendieck} des sch\'emas en groupes
  r\'eductifs d\'eploy\'es}, Publications Math\'ematiques de l'IH\'ES
  \textbf{34} (1968), 37--52.

\bibitem[Ses77]{Seshadri-Geometric-Reductivity}
C.~S. Seshadri, \emph{Geometric reductivity over arbitrary base}, Advances in
  Mathematics \textbf{26} (1977), no.~3, 225--274.

\bibitem[{Sta}26]{Stacks}
The {Stacks Project Authors}, \emph{\textit{Stacks Project}},
  \url{https://stacks.math.columbia.edu}, 2026.

\bibitem[Tot97]{Totaro}
Burt Totaro, \emph{Projective resolutions of representations of
  {$\mathop{GL}(n)$}}, Journal f{\"u}r die reine und angewandte Mathematik
  \textbf{482} (1997), 1--13.

\bibitem[vdK21]{vdK}
Wilberd van~der Kallen, \emph{Reductivity properties over an affine base},
  Indagationes Mathematicae \textbf{32} (2021), no.~5, 961--967, Special issue
  in memory of T. A. Springer.

\end{thebibliography}

\end{document}